\newcolumntype{L}{>{$}l<{$}}
\newcolumntype{C}{>{$}c<{$}}
\newcommand{\blue}[1]{\textcolor{blue}{#1}}
\newcommand{\alx}[1]{\textcolor{teal}{#1}}
\newcommand{\stkout}[1]{\ifmmode\text{\sout{\ensuremath{#1}}}\else\sout{#1}\fi}
\DeclareMathOperator\arctanh{arctanh}
\DeclareMathOperator\arcoth{arcoth}
\DeclareMathOperator\arcot{arcot}
\newtheorem{theorem}{Theorem}
\newtheorem{proposition}[theorem]{Proposition}
\newtheorem{rem}[theorem]{Remark}
\newtheorem{lemma}[theorem]{Lemma}
\newtheorem{corollary}[theorem]{Corollary}
\newtheorem{cor}[theorem]{Corollary}
\newtheorem{example}[theorem]{Example}
\newcommand{\R}{\mathbb R}
\newcommand{\N}{\mathbb N}
\newcommand{\Sp}{\mathbb S}
\newcommand{\C}{\mathcal{C}}
\newcommand{\la}{\langle}
\newcommand{\ra}{\rangle}
\newcommand{\n}{{\bf n}}
\newcommand{\tr}{{\rm tr}}
\newcommand{\grad}{{\nabla }}
\newcommand{\dist}{{\rm dist}}
\newcommand{\tf}{{\tilde f}}
\newcommand{\roll}{{\rm roll}}
\newcommand{\cut}{{\rm cut}}
\newcommand{\Ric}{{\rm Ric}}
\newcommand{\nd}{\frac{\partial}{\partial\n}}
\newcommand{\hb}{{\bar{h}}}
\newcommand{\htd}{\tilde{h}}
\newcommand{\divergence}{\mbox{div}}
\let\oldnu\nu
\renewcommand{\nu}{{\bm\oldnu}}
\newcommand{\constA}{{\bf A}} 
\newcommand{\constB}{{\bf B}} 
\newcommand{\cbm}{\overline{\constA}}
\newcommand{\scbm}{\overline{\constB}}
\title[Relation between the Steklov and the Laplacian spectrum]{The Steklov and Laplacian spectra of Riemannian manifolds with boundary\\
}
\author{Bruno Colbois}
\address{Universit\'e de Neuch\^atel, Institut de Math\'ematiques, Rue
	Emile-Argand 11, CH-2000 Neuch\^atel, Switzerland}
\email{bruno.colbois@unine.ch}
\author{Alexandre Girouard}
\address{D\'e\-par\-te\-ment de math\'ematiques et de
	sta\-tistique, Univer\-sit\'e Laval, Pavillon Alexandre\-Vachon,
	1045, av. de la M\'edecine,
	Qu\'ebec Qc G1V 0A6, 
	Canada }
\email{alexandre.girouard@mat.ulaval.ca}
\author{Asma Hassannezhad}
\address{University of Bristol, School of Mathematics, University Walk, Bristol BS8 1TW, UK}
\email{asma.hassannezhad@bristol.ac.uk}
\keywords{Steklov eigenvalues, Pohozaev identity, comparison geometry, Riccati equation, Dirichlet-to-Neumann map}
\subjclass{35P15 (primary), 58C40, 35P20 (secondary)}
\begin{document}

\begin{abstract}
Given two compact Riemannian manifolds $M_1$ and $M_2$ such that their respective boundaries $\Sigma_1$ and $\Sigma_2$ admit neighbourhoods $\Omega_1$ and $\Omega_2$ which are isometric, we prove the existence of a constant $C$ such that $|\sigma_k(M_1)-\sigma_k(M_2)|\leq C$ for each $k\in\N$.  The constant $C$ depends only on the geometry of $\Omega_1\cong\Omega_2$. This follows from a quantitative relationship between the Steklov eigenvalues $\sigma_k$ of a
  compact Riemannian manifold $M$ and the eigenvalues $\lambda_k$ of the Laplacian on
  its boundary. Our main result states that the difference $|\sigma_k-\sqrt{\lambda_k}|$ is bounded above by a constant which depends on the geometry of $M$ only in a neighbourhood of its boundary. 
  The proofs are based on a Pohozaev identity and on comparison geometry for principal curvatures of parallel hypersurfaces. In several situations, the constant $C$ is given explicitly in terms of bounds on the geometry of $\Omega_1\cong\Omega_2$.
\end{abstract}

\maketitle
%
%
%
%
\setlength{\parskip}{.3cm}


\section{Introduction}
Let $M$ be a smooth compact Riemannian manifold of dimension $n+1\geq 2$,
with nonempty boundary $\Sigma$. The Steklov eigenvalue problem on $M$ is to find all numbers $\sigma\in\R$ for which there exists a nonzero function $u\in C^\infty(M)$ which satisfies
\begin{equation*}
\begin{cases}
\Delta u=0 & \mbox{ in }M,\\
\frac{\partial u}{\partial \n}=\sigma u & \mbox{ on }\Sigma.
\end{cases}
\end{equation*}
Here $\n$ is the  outward unit normal along $\Sigma$ and $\Delta=\mbox{div}\circ\nabla$ is the Laplace-Beltrami operator acting on $C^\infty(M)$. The Steklov problem  has a discrete unbounded spectrum
\begin{equation*}
0=\sigma_1\le\sigma_2\leq\sigma_3\leq\cdots\nearrow+\infty,
\end{equation*}
where each eigenvalue is repeated according to its multiplicity.
For background on this problem, see \cite{GP15,legacy} and references therein.

\subsection{The Dirichlet-to-Neumann map and spectral asymptotics}
Let $\mathcal{H}:C^\infty(\Sigma)\rightarrow C^\infty(M)$ be the harmonic extension operator: the function $u=\mathcal{H}f$ satisfies $u=f$ on $\Sigma$ and $\Delta u=0$ in $M$.
The Steklov eigenvalues of $M$ are the eigenvalues of the Dirichlet-to-Neumann (DtN) map $D:C^\infty(\Sigma)\rightarrow C^\infty(\Sigma)$, which is defined by
$$Df=\nd\mathcal{H}{f}.$$
The DtN map is a first order self-adjoint elliptic pseudodifferential operator \cite[pp. 37--38]{Taylor}. Its principal symbol is given by $p(x,\xi)=|\xi|$, while that of the Laplace operator $\Delta_\Sigma:C^\infty(\Sigma)\rightarrow C^\infty(\Sigma)$ is $|\xi|^2$. Standard elliptic theory \cite{hormander,shubin} then implies that
\begin{gather}\label{asymptotics:weak}
\sigma_k\sim\sqrt{\lambda_k}\sim 2\pi\left(\frac{k}{\omega_n\mbox{Vol}_n(\Sigma)}\right)^{1/n}\qquad\mbox{ as }\quad k\to\infty,
\end{gather}
where $\omega_n$ is the volume of the unit ball $B(0,1)\subset\R^n$.
It follows that manifolds $M_1$ and $M_2$ which have isometric boundaries satisfy
$\sigma_k(M_1)\sim\sigma_k(M_2)$ as $k\to\infty$.

It was proved in \cite{LeeUhlmann} that the full symbol of the DtN map is determined by the Taylor series of the Riemannian metric of $M$ in the normal direction along its boundary $\Sigma$ (see also \cite{PoltSher}). 
Consider a closed Riemannian manifold $(\Sigma,g_\Sigma)$ and two compact Riemannian manifolds $(M_1,g_1)$ and $(M_2,g_2)$ with the same boundary $\Sigma=\partial M_1=\partial M_2$ such that $g_1\bigl|\bigr._{\Sigma}=g_2\bigl|\bigr._{\Sigma}=g_\Sigma$. If the metrics $g_1$ and $g_2$ have the same Taylor series on $\Sigma$, then $M_1$ and $M_2$
have asymptotically equivalent\footnote{The notation $O(k^{-\infty})$ designates a quantity which tends to zero faster than any power of $k$.}
Steklov spectra:
\begin{gather}\label{asymptotics:strong}
\sigma_k(M_1)=\sigma_k(M_2)+O(k^{-\infty})\quad\mbox{ as }\quad k\to\infty.
\end{gather}
See \cite[Lemma 2.1]{GPPS}.
In particular, $\lim_{k\to\infty}\sigma_k(M_1,g_1)-\sigma_k(M_2,g_2)=0$. 

\subsection{Main results}
The asymptotic behavior described by \eqref{asymptotics:weak} or \eqref{asymptotics:strong} does not contain any information regarding an individual eigenvalue $\sigma_k$. In order to obtain such information, stronger hypothesis are needed.
Indeed on any smooth compact Riemannian manifold $(M,g_0)$ with boundary, there exists a family of Riemannian metrics $(g_\epsilon)_{\epsilon\in\R_+}$ such that $g_\epsilon=g_0$ on a neighbourhood $\Omega_\epsilon$ of $\Sigma=\partial M$, while for each $k\in\N$,
$$\lim_{\epsilon\to\infty}\sigma_k(M,g_\epsilon)=0.$$
See \cite{CEG17} and \cite{CGM} for two such constructions.
If $n\geq 2$, there also exists a family of Riemannian metrics such that $g_\epsilon=g_0$ on a neighbourhood $\Omega_\epsilon$ of $\Sigma$ and 
$$\lim_{\epsilon\to\infty}\sigma_2(M,g_\epsilon)=+\infty.$$
See \cite{CEG17} for the construction of these families.
In this last construction, the neighbourhoods $\Omega_\epsilon$ are shrinking to $\Sigma$ as $\epsilon\to\infty$:
$$\bigcap_{\epsilon\in (0,\infty)}\Omega_\epsilon=\Sigma.$$
However, if the manifolds $M_1$ and $M_2$ are uniformly isometric near their boundary, the situation is completely different. 

\begin{theorem}\label{thm:mainstek}
  Given two compact Riemannian manifolds with boundary $M_1$ and $M_2$ such that their respective boundaries $\Sigma_1$ and $\Sigma_2$ admit neighbourhoods $\Omega_1$ and $\Omega_2$ which are isometric, there exists a constant $C$, which depends explicitly on the geometry of $\Omega_1\cong\Omega_2$, such that $|\sigma_k(M_1)-\sigma_k(M_2)|\leq C$ for each $k\in\N$. 
\end{theorem}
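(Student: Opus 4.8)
The plan is to obtain Theorem~\ref{thm:mainstek} as a corollary of the quantitative comparison announced in the abstract: for every compact Riemannian manifold $M$ with boundary $\Sigma$ one has
\[
|\sigma_k(M)-\sqrt{\lambda_k(\Sigma)}|\le C_0(M)\qquad\text{for all }k\in\N,
\]
where $\lambda_k(\Sigma)$ are the eigenvalues of the Laplacian on $(\Sigma,g|_\Sigma)$ and $C_0(M)$ depends only on the geometry of $M$ on a collar of $\Sigma$. Granting this estimate, Theorem~\ref{thm:mainstek} is immediate: an isometry $\Omega_1\to\Omega_2$ restricts to an isometry $(\Sigma_1,g_1|_{\Sigma_1})\to(\Sigma_2,g_2|_{\Sigma_2})$, so $\lambda_k(\Sigma_1)=\lambda_k(\Sigma_2)$ for all $k$, and the two collar constants $C_0(M_1)$, $C_0(M_2)$ may be taken equal to a single constant $C_0$ determined by the geometry of $\Omega_1\cong\Omega_2$. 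The triangle inequality then gives
\[
|\sigma_k(M_1)-\sigma_k(M_2)|\le|\sigma_k(M_1)-\sqrt{\lambda_k(\Sigma_1)}|+|\sqrt{\lambda_k(\Sigma_2)}-\sigma_k(M_2)|\le 2C_0=:C.
\]
Thus all the work is in the comparison estimate, and the steps below describe how I would prove it.

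First I would fix Fermi coordinates on a collar $\Sigma\times[0,h]\subset\Omega$, writing the metric as $dt^2+g_t$ with $g_0=g|_\Sigma$. The shape operators $S_t$ of the parallel hypersurfaces $\Sigma_t=\Sigma\times\{t\}$ obey the matrix Riccati equation $\dot S_t+S_t^2+R_t=0$, with $R_t$ built from the ambient sectional curvature. Comparison geometry for this ODE, starting from one- or two-sided bounds on the sectional curvature and on the principal curvatures of $\Sigma$ inside $\Omega$, yields explicit two-sided control of $S_t$, of $g_t$ relative to $g_0$, and of the Riemannian density, uniformly for $t$ in a sub-collar of a definite width $h_0$ depending only on these bounds and on the width of $\Omega$. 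This is where the geometry of $\Omega$ enters quantitatively, and the choice of $h_0$ must also keep $\Sigma\times[0,h_0]$ free of focal points so that the Fermi coordinates are genuinely valid there.

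Next comes a Rellich--Pohozaev identity. Let $f\in C^\infty(\Sigma)$, $u=\mathcal{H}f$ its harmonic extension, and $X=\chi(t)\,\partial_t$ where $\chi$ is a cutoff equal to $1$ near $\Sigma$ and supported in $\Sigma\times[0,h_0]$. Since $X$ is normal to $\Sigma$ with $|X|=1$ there, integrating $\operatorname{div}\bigl(|\nabla u|^2X-2\langle X,\nabla u\rangle\nabla u\bigr)$ over $M$ and using $\Delta u=0$ produces an identity whose boundary term is exactly $\int_\Sigma\bigl((\partial_\n u)^2-|\nabla_\Sigma u|^2\bigr)\,dA$ (the tangential cross term drops because $X$ is purely normal on $\Sigma$), and whose bulk term is a collar integral involving only $\operatorname{div}X$ and the covariant derivative $\nabla X$. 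By Step~2 these are pointwise bounded, so the bulk term is at most $C_0\int_{\Sigma\times[0,h_0]}|\nabla u|^2\le C_0\int_M|\nabla u|^2=C_0\langle Df,f\rangle_{L^2(\Sigma)}\le C_0\|Df\|_{L^2(\Sigma)}\|f\|_{L^2(\Sigma)}$. Since $u|_\Sigma=f$, this reads
\[
\bigl|\,\|Df\|_{L^2(\Sigma)}^2-\|\nabla_\Sigma f\|_{L^2(\Sigma)}^2\,\bigr|\le C_0\|Df\|_{L^2(\Sigma)}\|f\|_{L^2(\Sigma)},
\]
and the quadratic formula turns this into $\bigl|\,\|Df\|-\|\nabla_\Sigma f\|\,\bigr|\le C_0\|f\|$ on $L^2(\Sigma)$, i.e. $\bigl|\,\|Df\|-\|\sqrt{\Delta_\Sigma}\,f\|\,\bigr|\le C_0\|f\|$, since $\|\nabla_\Sigma f\|^2=\langle\Delta_\Sigma f,f\rangle=\|\sqrt{\Delta_\Sigma}f\|^2$.

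Finally I would conclude by min-max. For any non-negative self-adjoint operator $S$ with discrete spectrum one has $\mu_k(S)=\min_{\dim V=k}\max_{0\ne f\in V}\|Sf\|/\|f\|$ (apply the usual min-max to $S^2$ and take square roots). Applying this to $S=D$ and to $S=\sqrt{\Delta_\Sigma}$ and feeding in the inequality $\bigl|\,\|Df\|-\|\sqrt{\Delta_\Sigma}f\|\,\bigr|\le C_0\|f\|$ gives $|\sigma_k-\sqrt{\lambda_k}|\le C_0$ for every $k$, as required. I expect the main obstacle to be the error bookkeeping in Step~3: one must bound every bulk contribution — including the terms where the cutoff $\chi$ varies (hence the dependence on $h_0$) and those carrying tangential derivatives of the metric $g_t$ — by $C_0\int_{\text{collar}}|\nabla u|^2$ with a constant depending only on $\Omega$, and simultaneously verify in Step~2 that the Riccati comparison really does furnish such two-sided bounds on the principal curvatures of the parallel hypersurfaces over a collar of controlled width. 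Everything else is either classical or formal.
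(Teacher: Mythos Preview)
Your proposal is correct and follows essentially the same strategy as the paper: a Pohozaev identity with a normal vector field supported in a collar, Riccati comparison for the shape operators of parallel hypersurfaces to bound the bulk terms by the collar geometry, and a variational argument to pass from the resulting functional inequality to $|\sigma_k-\sqrt{\lambda_k}|\le C_0$, after which the triangle inequality gives Theorem~\ref{thm:mainstek}. The only differences are tactical: the paper takes $F=\nabla\bigl(\tfrac12\tilde f^{\,2}\bigr)$ with $\tilde f$ the distance to the inner hypersurface $\Sigma_h$ (so the linear vanishing of $\tilde f$ plays the role of your cutoff $\chi$ and yields the clean eigenvalue bound $(h-\delta)\kappa_i\le 1$ for $\nabla^2\eta$), and it runs the min--max by inserting Steklov eigenfunctions and harmonic extensions of Laplace eigenfunctions as trial spaces, arriving at the asymmetric pair $\lambda_k\le\sigma_k^2+\constA\sigma_k$ and $\sigma_k\le\constB+\sqrt{\constB^2+\lambda_k}$ rather than your single operator-norm bound $\bigl|\|Df\|-\|\sqrt{\Delta_\Sigma}f\|\bigr|\le C_0\|f\|$; your packaging of the concluding step is arguably cleaner, while the paper's separates the upper and lower bounds and thereby isolates weaker geometric hypotheses (Ricci and mean-curvature lower bounds only) for the constant $\constB$.
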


Not only the difference $|\sigma_k(M_1)-\sigma_k(M_2)|$ is bounded for each given $k\in\N$, but the bound is uniform: the constant $C$ does not depend on $k$. The existence of $C$ could be obtained from a Dirichlet-to-Neumann bracketting argument, but this would not lead to an explicit expression.

Theorem \ref{thm:mainstek} is a manifestation of the the principle stating that Steklov eigenvalues are mostly sensitive to the geometry of a manifold near its boundary. The more information we have on the metric $g$ on, and near, the boundary $\Sigma$, the more we can say about the Steklov spectrum of $M$. Knowing the metric on $\Sigma$ leads to the asymptotic formula \eqref{asymptotics:weak}, then knowing its Taylor series in the normal direction along $\Sigma$ leads to the refined asymptotic formula \eqref{asymptotics:strong}, and finally the knowledge of the metric on a neighbourhood of the boundary provides the extra information needed to control individual eigenvalues in Theorem \ref{thm:mainstek}.

\begin{rem}
Let $\ell$ be the number of connected components of the boundary $\Sigma$. Under the hypothesis of Theorem \ref{thm:mainstek}, it is well known that for $k \geq \ell$ the following holds:
\begin{gather*} 
C^{-1}\leq\frac{\sigma_k(M_1)}{\sigma_k(M_2)}\leq C.
\end{gather*}
This was used for instance in \cite{CEG17,CGR}. 
Theorem \ref{thm:mainstek} is stronger since it implies
$$\frac{1}{1+\frac{C}{\sigma_k(M_1)}}\leq\frac{\sigma_k(M_1)}{\sigma_k(M_2)}\leq1+\frac{C}{\sigma_k(M_2)}.$$
It is also known that $\sigma_k(M_i)\geq\sigma_k^N\geq\sigma_{\ell}^N$, where
$\sigma_k^N$ refer to the mixed Neumann-Steklov problem on $\Omega_1\cong\Omega_2$. See \cite{CEG17} for details.
\end{rem}

Theorem \ref{thm:mainstek} follows from our main result, which is
a quantitative comparison between the Steklov eigenvalues of $M$ and the eigenvalues of the Laplace operator $\Delta_\Sigma$ on its boundary $\Sigma$, which are denoted
$0=\lambda_1\leq\lambda_2\leq\cdots\nearrow\infty$.
	Let $n\in\N$ and let $\alpha,\beta,\kappa_-,\kappa_+\in\R$ and $h>0$ be such that $\alpha\leq\beta$ and $\kappa_-\leq\kappa_+$. Consider the class $\mathcal{M}=\mathcal{M}(n,\alpha,\beta,\kappa_-,\kappa_+,h)$ of smooth compact Riemannian manifolds $M$ of dimension $n+1$ with nonempty boundary $\Sigma$, which satisfy the following hypotheses:
\begin{itemize}
	\item[(H1)] The rolling radius\footnote{The rolling radius will be defined in Section \ref{section:prelim}.} of $M$ satisfies $\hb:=\roll(M)\geq h$.
	\item[(H2)] The sectional curvature $K$ satisfies
	$\alpha\leq K\leq \beta$ on the tubular neighbourhood
	$$M_\hb=\{x\in M\,:\,d(x,\Sigma)<\hb\}.$$
	\item[(H3)] The principal curvatures of the boundary $\Sigma$ satisfy
	$\kappa_-\leq\kappa_i\leq\kappa_+.$ 
\end{itemize}
The main result of this paper is the following.
\begin{theorem}\label{mainintro}
There exist explicit constants $\constA=\constA(n,\alpha,\beta,\kappa_-,\kappa_+,h)$ and $\constB=\constB(n,\alpha,\kappa_-,h)$ such that each manifold $M$ in the class $\mathcal{M}$ satisfies the following inequalities for each~$k\in\N$,
\begin{gather}
\lambda_k\leq \sigma_k^2+\constA\sigma_k,\label{ineq:main1}\\
\sigma_k\leq \constB+\sqrt{\constB^2+\lambda_k}.\label{ineq:thm:mainstek}
\end{gather}
In particular, for each $k\in\N$,
$|\sigma_k-\sqrt{\lambda_k}|<\max\{\constA,2\constB\}.$
\end{theorem}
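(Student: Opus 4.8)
The plan is to prove the two inequalities \eqref{ineq:main1} and \eqref{ineq:thm:mainstek} separately, exploiting the variational characterization of both spectra together with control of the harmonic extension operator in a collar neighbourhood of $\Sigma$. Throughout I would fix a collar $\Sigma\times[0,\hb)$ with Fermat coordinates $(x,t)$, $t=d(\cdot,\Sigma)$, so that the metric reads $g=dt^2+g_t$, where $g_t$ is the induced metric on the parallel hypersurface $\Sigma_t$. The key geometric input from hypotheses (H1)--(H3) is that the shape operator $S_t$ of $\Sigma_t$ satisfies a Riccati equation $S_t'+S_t^2+R_t=0$; comparison geometry (Riccati comparison, as advertised in the abstract) then yields two-sided bounds $a(t)\,\mathrm{Id}\leq S_t\leq b(t)\,\mathrm{Id}$ on $M_\hb$ with explicit $a,b$ depending only on $(n,\alpha,\beta,\kappa_-,\kappa_+)$, and consequently two-sided bounds on the volume density $\sqrt{\det g_t}$ and on $g_t$ itself relative to $g_0=g_\Sigma$. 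This is the geometric heart and the step I expect to demand the most care, since one must track constants and ensure the comparison functions stay regular on $[0,h]$.

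For the lower bound \eqref{ineq:main1}, the natural route is a Pohozaev-type identity (as flagged in the abstract). Given a Steklov eigenfunction $u$ with eigenvalue $\sigma$, one integrates the Rellich--Pohozaev identity for the harmonic function $u$ against the vector field $\partial_t$ (extended suitably and cut off near $t=\hb$). Using $\Delta u=0$ and $\partial u/\partial\n=\sigma u$ on $\Sigma$, the boundary terms produce, on one side, $\int_\Sigma |\nabla_\Sigma u|^2$ and $\sigma^2\int_\Sigma u^2$, and the interior/curvature terms are controlled by $\constA\,\sigma\int_\Sigma u^2$ via the shape-operator bounds and standard elliptic estimates $\|\nabla u\|_{L^2(M_\hb)}\lesssim \sqrt{\sigma}\|u\|_{L^2(\Sigma)}$ (which themselves come from testing the harmonicity against $u$ times a cutoff and using the collar bounds). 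Rearranging gives $\int_\Sigma|\nabla_\Sigma u|^2\leq (\sigma^2+\constA\sigma)\int_\Sigma u^2$ for any Steklov eigenfunction; combining this with the min-max principle for $\lambda_k$ on the $k$-dimensional space spanned by the first $k$ Steklov eigenfunctions (whose traces are independent on $\Sigma$) yields $\lambda_k\leq \sigma_k^2+\constA\sigma_k$.

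For the upper bound \eqref{ineq:thm:mainstek}, I would go the other direction: use the min-max principle for $\sigma_k$ with trial functions obtained by taking the first $k$ Laplace eigenfunctions $\phi_1,\dots,\phi_k$ on $\Sigma$, extending each to $M$ by $\phi_j(x)\,\chi(t)$ for a fixed cutoff $\chi$ with $\chi(0)=1$ supported in the collar, and estimating the Rayleigh quotient $\int_M|\nabla \tilde\phi|^2 / \int_\Sigma \tilde\phi^2$. The numerator splits into a tangential part $\int \chi^2 |\nabla_\Sigma\phi|_{g_t}^2$ and a normal part $\int (\chi')^2\phi^2$; the collar bounds convert $|\cdot|_{g_t}$ to $|\cdot|_{g_\Sigma}$ with controlled constants, and after optimizing $\chi$ (or simply choosing $\chi(t)=e^{-\constB t/c}$-type profiles cut off at $\hb$) one obtains a Rayleigh quotient bounded by $\lambda_k + 2\constB\sqrt{\lambda_k}+\constB^2$ on this $k$-dimensional space, i.e. $\sigma_k\leq \constB+\sqrt{\constB^2+\lambda_k}$. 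Note this half needs only the lower curvature bound $\alpha$ and $\kappa_-$ (hence $\constB$ depends only on $n,\alpha,\kappa_-,h$), since only one-sided density/metric comparison is required. Finally, the concluding estimate $|\sigma_k-\sqrt{\lambda_k}|<\max\{\constA,2\constB\}$ follows by elementary algebra: \eqref{ineq:main1} gives $\sqrt{\lambda_k}\leq\sqrt{\sigma_k^2+\constA\sigma_k}\leq\sigma_k+\constA$ (when $\sigma_k>0$; the case $\sigma_k=0$ forces $\lambda_k=0$ as well), while \eqref{ineq:thm:mainstek} rearranges to $\sigma_k^2-2\constB\sigma_k\leq\lambda_k$, hence $\sigma_k-\sqrt{\lambda_k}\leq 2\constB$ after completing the square. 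Combining the two one-sided bounds gives the stated two-sided estimate.
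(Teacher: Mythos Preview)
Your approach to \eqref{ineq:main1} is essentially the paper's: a Pohozaev identity applied to Steklov eigenfunctions, with the curvature bounds controlling the interior remainder by a multiple of $\int_M|\grad u|^2=\sigma\int_\Sigma u^2$, followed by min--max for $\lambda_k$. The paper uses the specific vector field $F=\grad\eta$ with $\eta=\tfrac12(h-f)^2$ rather than a cutoff of $\partial_t$, which makes $F$ automatically Lipschitz and produces clean boundary terms, but that is a matter of execution rather than strategy.

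Your treatment of \eqref{ineq:thm:mainstek}, however, has a genuine gap. With trial functions $\phi_j(x)\chi(t)$ and a \emph{fixed} profile $\chi$ (such as $e^{-\constB t/c}$), the Steklov Rayleigh quotient is of the form $c_1\lambda_k+c_2$, with $c_1>0$ determined by $\int\chi^2$ and the collar distortion; it does \emph{not} take the shape $\lambda_k+2\constB\sqrt{\lambda_k}+\constB^2$. And even if it did, min--max would only give $\sigma_k\leq(\sqrt{\lambda_k}+\constB)^2$, which is quadratic in $\sqrt{\lambda_k}$ and certainly does not imply $\sigma_k\leq\constB+\sqrt{\constB^2+\lambda_k}$. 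Optimizing $\chi$ to depend on $\lambda_k$ (say $\chi(t)=e^{-\sqrt{\lambda_k}\,t}$) can bring the bound down to $\sigma_k\leq C_1\sqrt{\lambda_k}+C_2$, but the metric distortion $g_t\neq g_0$ in the collar forces $C_1>1$ in general, whereas the theorem requires the sharp coefficient $1$ in front of $\sqrt{\lambda_k}$ (this is precisely what makes $|\sigma_k-\sqrt{\lambda_k}|$ bounded rather than merely $\sigma_k/\sqrt{\lambda_k}$ bounded).

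The paper's route for \eqref{ineq:thm:mainstek} avoids this obstacle by using the \emph{harmonic} extensions $u_j=\mathcal H\phi_j$ as trial functions. For harmonic $u$ with $\|u\|_{L^2(\Sigma)}=1$ one has exactly $\int_M|\grad u|^2=\int_\Sigma u\,\partial_\n u\leq\|\partial_\n u\|_{L^2(\Sigma)}$, and the \emph{same} Pohozaev identity used for \eqref{ineq:main1}, read with the opposite sign of the curvature-controlled remainder, yields the quadratic inequality $\|\partial_\n u\|^2-2\constB\|\partial_\n u\|-\|\grad_\Sigma u\|^2\leq 0$, hence $\|\partial_\n u\|\leq\constB+\sqrt{\constB^2+\|\grad_\Sigma u\|^2}$. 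Combining these two facts and applying min--max gives \eqref{ineq:thm:mainstek} with the correct constant. The symmetry---both inequalities flow from a single Pohozaev identity, one for each sign of the remainder---is the idea your proposal is missing.
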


Under the hypotheses of Theorem \ref{mainintro} one can take
$$\constA=\frac{n}{\htd}+\sqrt{|\alpha|+\kappa_-^2}
\qquad\mbox{ and }\qquad
\constB=\frac{1}{2\hb}+\frac{n}{2}\sqrt{|\alpha|+\kappa_-^2},$$
where $\tilde h\leq \hb$ is a positive constant depending on $\bar h$, $\beta$, and $\kappa_+$. See \eqref{def:htd} for the precise definition.
Various other values for the constants $\constA$ and $\constB$ will be given under more restrictive assumptions in Section~\ref{section:introsigned}.

This theorem is in the spirit of the very nice result of \cite{PS16} where a similar statement is proved for Euclidean domains. 
	The dependence of $\constA$ on the dimension $n$ is necessary. Indeed, it was observed in \cite{PS16} that on the ball $B(0,R)\subset\R^{n+1}$ the following holds for each $k$: $$\lambda_k=\sigma_k^2+\frac{n-1}{R}\sigma_k.$$ 
	We do not know if the dependence of $\constB$ on $n$ is necessary in general. However, see Theorem \ref{thm:signedcurvature} for situations where $\constB$ does not depend on $n$.

        Inequality \eqref{ineq:thm:mainstek} also holds under weaker hypotheses on $M$. 
Let $\mathcal{M'}=\mathcal{M'}(n,\alpha,\kappa_-,h)$ be the class of smooth compact Riemannian manifolds of {dimension $n+1$} with nonempty boundary $\Sigma$, which satisfy (H1) and
\begin{itemize}
			\item[(H2$'$)] The Ricci curvature satisfies $\mbox{Ric}\geq n\alpha$ on $M_{\hb}$.
			\item[(H3$'$)] The mean curvature of $\Sigma$ satisfies $H\geq \kappa_-.$
		\end{itemize}
		Then we have the following theorem. 
\begin{theorem}\label{rem:weakerH}
There exists an explicit constant $\constB=\constB(n,\alpha,\kappa_-,h)$ such that each manifold $M$ in the class $\mathcal{M'}$ satisfies the following inequality for each~$k\in\N$,
\begin{gather}
\sigma_k\leq \constB+\sqrt{\constB^2+\lambda_k}.\label{ineq:thm:mainstek2}
\end{gather}
\end{theorem}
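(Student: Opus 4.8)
The plan is to establish inequality \eqref{ineq:thm:mainstek2} via the variational (min-max) characterisation of both $\sigma_k$ and $\lambda_k$, using the harmonic extension operator $\mathcal H$ to transplant test functions from $\Sigma$ into $M$. Recall that
$$\sigma_k=\min_{E}\max_{0\neq u\in E}\frac{\int_M|\nabla u|^2\,dV}{\int_\Sigma u^2\,dA},$$
the minimum being over $k$-dimensional subspaces $E\subset C^\infty(M)$, and similarly $\lambda_k$ is a min-max of the Rayleigh quotient $\int_\Sigma|\nabla_\Sigma f|^2/\int_\Sigma f^2$ over $k$-dimensional subspaces of $C^\infty(\Sigma)$. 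Since harmonic functions minimise the Dirichlet energy among all extensions of a given boundary datum, it suffices to produce, for each $f\in C^\infty(\Sigma)$, a reasonably explicit extension $\tilde u$ of $f$ into the collar $M_{\hb}$ (extended by a cutoff to the rest of $M$) whose Dirichlet energy is controlled by $\constB\bigl(\int_\Sigma f^2\bigr)$ plus a term comparable to $\int_\Sigma|\nabla_\Sigma f|^2$. Feeding the $k$-dimensional eigenspace of $\Delta_\Sigma$ spanned by $f_1,\dots,f_k$ through this construction yields a $k$-dimensional test space for the Steklov problem, and the min-max inequality then gives a quadratic inequality in $\sigma_k$ of the form $\sigma_k^2-2\constB\sigma_k-\lambda_k\le 0$, which rearranges to \eqref{ineq:thm:mainstek2}.

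The key construction is the extension. Using Fermi (normal) coordinates $(t,y)\in[0,\hb)\times\Sigma$ near the boundary — where $t=d(\cdot,\Sigma)$ and the metric takes the form $dt^2+g_t$ on the slices $\Sigma_t$ — set $\tilde u(t,y)=\phi(t)f(y)$ for a profile function $\phi$ with $\phi(0)=1$ that decays and is cut off to $0$ before reaching depth $\hb$ (the rolling radius hypothesis (H1) guarantees Fermi coordinates are valid on $M_{\hb}$). Then
$$|\nabla\tilde u|^2=\phi'(t)^2 f(y)^2+\phi(t)^2|\nabla_{\Sigma_t}f|^2_{g_t},$$
so
$$\int_M|\nabla\tilde u|^2\,dV=\int_0^{\hb}\phi'(t)^2\!\!\int_{\Sigma_t}\!f^2\,dA_t\,dt+\int_0^{\hb}\phi(t)^2\!\!\int_{\Sigma_t}\!|\nabla_{\Sigma_t}f|^2\,dA_t\,dt.$$
To control the inner integrals over the parallel hypersurfaces $\Sigma_t$ in terms of the corresponding integrals over $\Sigma=\Sigma_0$, one needs upper bounds on the volume element $\theta(t,y)=\sqrt{\det g_t/\det g_0}$ and on the metric distortion in the tangential directions; these are exactly what comparison geometry provides. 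The mean-curvature lower bound (H3$'$), together with the Riccati equation for the second fundamental form of $\Sigma_t$ and the Ricci lower bound (H2$'$), gives a lower bound on the mean curvature of $\Sigma_t$ (Heintze–Karcher-type comparison), hence $\partial_t\log\theta=-H(\Sigma_t)$ is bounded above, so $\theta(t,y)\le e^{c t}$ for an explicit $c=c(n,\alpha,\kappa_-)$; this bounds $\int_{\Sigma_t}f^2\,dA_t$. For the gradient term a matching two-sided bound on $g_t$ restricted to $T\Sigma$ is needed — but here one should instead choose $\phi$ supported on $[0,h']$ for a small explicit $h'\le\hb$ (depending only on $n,\alpha,\kappa_-,h$) on which these distortions are, say, at most $2$, so that the gradient term is $\le 2\int_0^{h'}\phi^2\,dt\cdot\int_\Sigma|\nabla_\Sigma f|^2\,dA$.

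Finally one optimises the profile $\phi$. With the volume-element bound in hand, the problem reduces to a one-dimensional computation: choosing $\phi(t)=e^{-\mu t}$ suitably truncated (or the exact minimiser of the associated Sturm–Liouville quotient on $[0,h']$) makes $\int_0^{h'}\phi'^2\,e^{ct}\,dt\big/\phi(0)^2$ as small as possible, giving a bound of the form $2\constB$ with $\constB=\constB(n,\alpha,\kappa_-,h)$ for the $f^2$-coefficient, while the $|\nabla_\Sigma f|^2$-coefficient stays bounded by an absolute multiple of $h'$, hence by a constant of the same type; after absorbing, the Rayleigh quotient of $\tilde u$ is at most $2\constB\,\frac{\int_\Sigma f^2}{\int_\Sigma f^2}+\frac{\lambda_k\int_\Sigma f^2}{\int_\Sigma f^2}$-style expression — more precisely one gets $\sigma_k\le \constB+\sqrt{\constB^2+\lambda_k}$ after the quadratic rearrangement. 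The main obstacle is the comparison-geometry step controlling the parallel hypersurfaces $\Sigma_t$ under the \emph{weak} hypotheses (H2$'$)–(H3$'$): only the mean curvature, not all principal curvatures, is controlled initially, so one cannot directly bound the full shape operator; the trick is that for the volume element only the trace (mean curvature) enters, and the scalar Riccati inequality $H'(\Sigma_t)\le -\tfrac{1}{n}H(\Sigma_t)^2-\Ric(\partial_t,\partial_t)$ propagates the lower bound on $H$ forward in $t$, which is enough. Keeping all constants explicit throughout — tracking the Heintze–Karcher comparison constant and the one-dimensional optimisation — is what yields the explicit $\constB$.
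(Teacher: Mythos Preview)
Your approach has a genuine gap in the treatment of the tangential gradient term. You correctly note that under the weak hypotheses (H2$'$)--(H3$'$) only the mean curvature of $\Sigma_t$ is controlled (via the scalar Riccati inequality and the Ricci lower bound), and that this suffices to bound the volume element $\theta(t,y)$. But you then assert that on some collar $[0,h']$ with $h'$ depending only on $(n,\alpha,\kappa_-,h)$ the full metric distortion of $g_t$ is ``at most $2$'', and this is exactly what the weak hypotheses cannot provide. The evolution of $g_t$ in individual tangential directions is governed by the full second fundamental form of $\Sigma_t$, which in turn is governed by the Jacobi equation involving the sectional curvatures $R_\nu$, not just their trace. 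With only a Ricci lower bound and a mean-curvature lower bound, one principal curvature can be arbitrarily large (compensated by another being very negative), and the corresponding eigenvalue of $g_t$ can shrink arbitrarily fast on any fixed collar, making $|\nabla_{\Sigma_t}f|^2_{g_t}$ uncontrollable in terms of $|\nabla_\Sigma f|^2_{g_0}$. There is no uniform $h'$ across the class $\mathcal{M}'$.

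The paper avoids this obstacle altogether by using the Pohozaev identity (Lemma~\ref{poho}) with the vector field $F=\nabla\eta$, $\eta=\tfrac12\tilde f^2$. This yields the exact identity of Lemma~\ref{lemma:analyticestimatedistance}, and the crucial point is that in the \emph{lower} bound of Lemma~\ref{lemma:firstgeomcontrol},
\[
\Delta\eta\,|\nabla v|^2-2\nabla^2\eta(\nabla v,\nabla v)\ \ge\ -\Bigl(1-\sum_{i=1}^n\rho_i\Bigr)|\nabla v|^2,
\]
only the \emph{sum} $\sum_i\rho_i=(h-\delta)\,nH(\delta)$ appears, never the individual $\rho_i$. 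Hence Theorem~\ref{thm:shapecomparisonC} (mean-curvature comparison under (H2$'$)--(H3$'$)) is all that is needed to get Lemma~\ref{estimatew/ricci} and then inequality~\eqref{nor}; the argument of Part~b of the proof of Theorem~\ref{mainintro} then goes through verbatim. In short, the Pohozaev route succeeds precisely because it packages the geometry so that only traces enter the inequality you need, whereas a direct Fermi-coordinate extension of the type you propose unavoidably requires control of the full shape operator.
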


\subsection{Discussion and previous results}
Quantitative estimates relating individual Steklov eigenvalues to eigenvalues of the tangential Laplacian $\Delta_\Sigma$ have been studied in \cite{WX,CGR,CEG17,CGG, Kar15,YangYu,PS16,Xi}.  They are relatively easy to obtain if the manifold $M$ is isometric (or quasi-isometric with some control) to a product near its boundary. See for example
\cite[Lemma 2.1]{CGG}. In this context however, it is usually the quotient $\sqrt{\lambda_k}/\sigma_k$ which is controlled.

Theorem \ref{mainintro} is a generalization of the fundamental result of \cite{PS16}, and also of \cite{Xi}. For bounded Euclidean domains $\Omega\subset\R^{n+1}$ with smooth connected boundary $\Sigma=\partial\Omega$, 
Provenzano and Stubbe \cite{PS16} proved a comparison result similar to Theorem \ref{mainintro}, with the constants $\constA$ and $\constB$ replaced by a constant  ${\bf C}_{\Omega}$ depending  on the dimension, the
maximum of the  mean of the absolute values of the principal
curvatures on $\Sigma=\partial\Omega$ and the rolling
radius of $\Omega$. Their main insight was to use a generalized Pohozaev identity\footnote{See Section \ref{section:pohozaev} for details on the Pohozaev identity.} in order to compare the Dirichlet energy of an harmonic function $u\in C^{\infty}(\overline{\Omega})$ to the $L^2$-norm of its normal derivative along $\Sigma$. 
In \cite{Xi}, Xiong  extended the results of \cite{PS16} to the Riemannian setting under rather stringent hypotheses. Indeed he considered  domains $\Omega$ with convex boundary in a complete Riemannian manifold $X$ with either nonpositive ($K_X\leq 0$) or strictly positive ($K_X>0$) sectional curvature, with some hypotheses on the shape operator (or second fundamental form). These condition in particular implies that the boundary is connected.

Theorem \ref{mainintro} improves these results in several ways. First we consider compact Riemannian manifolds with boundary. This is more general than bounded domains in a complete manifold (see \cite{PV16} for a discussion of this question). Another strength of Theorem \ref{mainintro} is that we require geometric control of the manifold $M$ only in the neighbourhood $M_\hb$ of its boundary. Finally, we are neither assuming the boundary $\Sigma$ to be connected nor the sectional curvature to have a constant sign.
\begin{rem}
	Let $\ell$ be the number of connected components of $\Sigma$. Then $\lambda_i=0,$ for $i=1,\ldots,\ell$. Thus, inequality \eqref{ineq:main1} is of interest only  for $k\ge \ell+1$. 
	Furthermore inequality \eqref{ineq:thm:mainstek} becomes $\sigma_k\le 2\constB$ for $k=1,\ldots,\ell$. There are indeed examples where $\sigma_\ell$ is arbitrarily small. See Section \ref{section:examples}.
\end{rem}

\begin{rem}
	The \emph{inner and exterior boundary} of a domain $D\subset M$ are defined to be
		$$\partial_ID:=\partial D\cap \mbox{int}\, M,\qquad
		\partial_ED:=\overline{D}\cap\Sigma.
		$$
The Cheeger and Jammes constant of $M$ are defined to be
$$h_M=\inf_D\frac{\mbox{Vol}_n(\partial_ID)}
{\mbox{Vol}_{n+1}(D)}\qquad\mbox{ and }\qquad
h_M'=\inf_D\frac{\mbox{Vol}_n(\partial_ID)}
{\mbox{Vol}_{n}(\partial_ED)},$$
where both infima are taken over all domains $D\subset M$ such that
$\mbox{Vol}_{n+1}(D)\leq\frac{\mbox{Vol}_{n+1}(M)}{2}$.
In \cite{jammes} Jammes proved that 
	\begin{gather}\label{ineq:pjammes}
	\sigma_2\geq \frac{1}{4}h_Mh_M'.
	\end{gather}
	Another useful lower bound can be deduced from Theorem \ref{mainintro} and the Cheeger inequality for $\Sigma$, which states that
	\begin{gather}\label{ineq:cheeger}
	\lambda_2\geq\frac{1}{4}h_\Sigma^2,
	\end{gather}
        where $h_\Sigma$ is the Cheeger constant of the closed manifold $\Sigma$.
	 Indeed, it follows from inequality \eqref{ineq:main1} and from inequality \eqref{ineq:cheeger} that
	$$\sigma_2\geq \frac{1}{2}\bigl(-A+\sqrt{A^2+h_\Sigma^2}\bigr).$$
	The interest of this lower bound is that it depends only on the geometry of $M$ on and near its boundary $\Sigma$. Of course this is useful only when the boundary $\Sigma$ is connected, in which case $h_\Sigma>0$.
	It is easy to construct examples of manifolds where $h_Mh_M'$ is arbitrarily small while 
	$\frac{1}{2}\bigl(-A+\sqrt{A^2+h_\Sigma^2}\bigr)$ is bounded away from zero, for instance by attaching a Cheeger dumbbell with thin neck to the interior of $M$ using a thin cylinder.

One could also obtain similar lower bounds for $\sigma_k$ from the  higher order Cheeger inequality for $\lambda_k$ \cite{miclo,funano}, which should be compared with the higher order Cheeger type inequality proved in \cite{HM}. The lower bounds for $\sigma_k$ given in \cite{jammes,HM} depends on the global geometry of the manifold and not only the geometry of the manifold near the boundary.
\end{rem}

\subsection{Signed curvature and convexity}\label{section:introsigned}
In \cite{Xi}, Xiong  extended the results of~\cite{PS16} to domains $\Omega$ with convex  boundary in a complete Riemannian manifold $X$ with either nonpositive ($K_X\leq 0$) or strictly positive ($K_X>0$) sectional curvature, with some hypotheses on the shape operator.
This lead to explicit values for the constants $\constA$ and $\constB$. This work was enlightening for us, and lead to Theorem~\ref{thm:signedcurvature} and Corollary \ref{Xiong} below.

Let us now discuss various geometric settings where the constants $\constA$ and $\constB$ can be improved.
\begin{theorem}\label{thm:signedcurvature}
Let $M$ be a smooth compact manifold of dimension $n+1$ with nonempty boundary $\Sigma$. Let $\hb=\roll(M)$. Let $\lambda,\kappa_+>0$.
\begin{enumerate}
\item Suppose that $\Sigma$ is totally geodesic.
If $|K|\leq\lambda^2$, then \eqref{ineq:main1} and \eqref{ineq:thm:mainstek} hold with
$$\constA=n\max\{\bar h^{-1},\frac{2\lambda}{\pi}\}+\lambda\quad \mbox{and}\quad\constB =\frac{1}{2}\left(\frac{1}{\hb}+{n\lambda}\right).$$
\smallskip

\noindent Knowing the sign of the sectional curvature $K$ leads to slightly more precise bounds.
\begin{itemize}
\item[a)] If $-\lambda^2<K<0$, then \eqref{ineq:main1} and \eqref{ineq:thm:mainstek} hold with
$$\constA=\frac{1}{\bar h}+\lambda\quad\mbox{and}\quad \constB=\frac{1}{2}\left(\frac{1}{\bar h}+n\lambda\right).$$
\medskip
\item[b)] $0<K<\lambda^2$, then \eqref{ineq:main1} and \eqref{ineq:thm:mainstek} hold with
$$\constA=n\max\{\bar h^{-1},\frac{2\lambda}{\pi}\}\quad\text{and}\quad \constB=\frac{1}{2\hb}.$$
\end{itemize}
\item Suppose that $\Sigma$ is a minimal hypersurface. 
If $\Ric\ge\lambda^2n$, then \eqref{ineq:thm:mainstek} holds with
	$\constB$ given in part (1).
	\medskip
	
\item If $-\lambda^2\leq K\leq 0$ and each principal curvature satisfies $\lambda<\kappa_i<\kappa_+$, then \eqref{ineq:main1} and \eqref{ineq:thm:mainstek} hold with
$$\constA=n\max\{\kappa_+,\frac{1}{\hb}\}\qquad\mbox{and}\qquad\constB=\frac{1}{2\hb}.$$
\item 
 If $0<K<\lambda^2$ and each principal curvature satisfies $0<\kappa_i<\kappa_+$, then \eqref{ineq:main1} and \eqref{ineq:thm:mainstek} hold with
$$\constA=n\max\left\{\frac{1}{\hb},\sqrt{\lambda^2+\kappa_+^2}\right\}\quad\text{and}\quad \constB=\frac{1}{2\hb}.$$
\end{enumerate}
\end{theorem}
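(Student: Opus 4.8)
The plan is to derive Theorem~\ref{thm:signedcurvature} as a collection of specializations of Theorem~\ref{mainintro} (and Theorem~\ref{rem:weakerH}), by plugging the relevant curvature and convexity hypotheses into the general formulas
$$\constA=\frac{n}{\htd}+\sqrt{|\alpha|+\kappa_-^2}\qquad\text{and}\qquad \constB=\frac{1}{2\hb}+\frac{n}{2}\sqrt{|\alpha|+\kappa_-^2},$$
and then analysing how the auxiliary quantity $\htd$ of \eqref{def:htd} simplifies under each sign assumption. The first task is to revisit the definition of $\htd$: it is the largest length for which the parallel hypersurfaces $\Sigma_t$ remain smooth, controlled via a Riccati comparison argument using the upper curvature bound $\beta$ and the upper principal curvature bound $\kappa_+$. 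In the totally geodesic case $\kappa_-=\kappa_+=0$, so $\constB=\tfrac{1}{2\hb}+\tfrac{n}{2}\sqrt{|\alpha|}$; writing $|\alpha|\le\lambda^2$ gives the stated $\constB=\tfrac12(\hb^{-1}+n\lambda)$, and for $\constA$ one computes $\htd$ explicitly from the solution of the Riccati equation $r'+r^2+\beta=0$ with $r(0)=0$: when $|\beta|\le\lambda^2$ the first focal time is at least $\tfrac{\pi}{2\lambda}$ (in the positively curved case this is the conjugate-point estimate; in the negatively curved case the solution never blows up), which yields $\htd\ge\min\{\hb,\tfrac{\pi}{2\lambda}\}$, hence $\tfrac{n}{\htd}\le n\max\{\hb^{-1},\tfrac{2\lambda}{\pi}\}$, and adding $\sqrt{|\alpha|}\le\lambda$ gives $\constA$.

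For the sign-refined sub-cases (a) and (b), the improvements come from two observations. When $-\lambda^2<K<0$ the Riccati solution starting at $r(0)=0$ stays bounded for all time (parallel hypersurfaces on the inside never develop focal points), so the only constraint on $\htd$ is the rolling radius, giving $\htd=\hb$ and hence $\constA=\hb^{-1}+\lambda$; the $\constB$ here is unchanged because the $\sqrt{|\alpha|+\kappa_-^2}=\sqrt{|\alpha|}\le\lambda$ term still contributes. When $0<K<\lambda^2$ one has $\alpha>0$, so $|\alpha|$ does not appear the way it did — more precisely, with $\kappa_-=0$ and $\alpha>0$ the sharper form of the estimates (the ones that track the sign of $\alpha$ rather than $|\alpha|$) makes the curvature term drop out entirely, leaving $\constB=\tfrac{1}{2\hb}$ and $\constA=n/\htd$ with $\htd\ge\min\{\hb,\tfrac{\pi}{2\lambda}\}$ from the conjugate-point bound. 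Part (2) is the analogue of (1) with the full curvature hypothesis weakened to a Ricci lower bound and the convexity weakened to minimality, so it follows from Theorem~\ref{rem:weakerH} by exactly the same bookkeeping, noting that only the lower bounds $\Ric\ge n\alpha$ and $H\ge\kappa_-$ enter $\constB$.

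For parts (3) and (4) the boundary is strictly convex, so $\kappa_->0$, and the gain is in the focal-distance estimate: with a positive lower bound on the principal curvatures and a two-sided sectional curvature bound, the Riccati comparison forces the \emph{inward} parallel hypersurfaces to stay smooth for the full distance $\hb$ (convexity persists inward under the curvature hypothesis), so $\htd=\hb$ again. In case (3), $-\lambda^2\le K\le 0$ and $\lambda<\kappa_i<\kappa_+$ combine so that $\sqrt{|\alpha|+\kappa_-^2}\le\sqrt{\lambda^2+\lambda^2}$... actually one uses the sharper comparison where the relevant quantity is the principal curvature of the parallel hypersurface, bounded by $\kappa_+$, giving $\constA=n\max\{\kappa_+,\hb^{-1}\}$ and $\constB=\tfrac{1}{2\hb}$; case (4) is identical except that the curvature is positive and the bound on the parallel principal curvature becomes $\sqrt{\lambda^2+\kappa_+^2}$ rather than $\kappa_+$, reflecting that positive ambient curvature can increase the principal curvatures of inner parallel hypersurfaces.

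The main obstacle I anticipate is not any single computation but keeping the two families of constants straight: the general Theorem~\ref{mainintro} is stated with the crude bound $\sqrt{|\alpha|+\kappa_-^2}$, but several of the refinements in Theorem~\ref{thm:signedcurvature} require going back into the proof of Theorem~\ref{mainintro} to extract the sharper intermediate inequality — in which the curvature term is $\sqrt{\alpha_-+\kappa_-^2}$ with $\alpha_-=\max\{-\alpha,0\}$, or in which the constant is controlled directly by the principal curvature $\kappa_+(t)$ of the parallel hypersurface $\Sigma_t$ via the Riccati comparison rather than by a uniform bound. So the real content is: (i) isolate the cleanest form of the key estimates in the body of the paper so they can be quoted with the sign of $\alpha$ and the convexity built in; (ii) for each of the five cases, run the Riccati comparison (Lemma in Section~\ref{section:prelim}) to pin down $\htd$ — equal to $\hb$ whenever the inner parallel hypersurfaces remain smooth, and equal to $\min\{\hb,\pi/(2\lambda)\}$ when a conjugate point can occur; and (iii) substitute. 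Each individual verification is a short explicit computation with the Riccati equation $r'=-r^2-K$, so no step is genuinely hard once the right form of the master inequality is in hand.
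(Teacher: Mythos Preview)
Your overall plan---go back to the intermediate constants $\cbm,\scbm$ from the proof of Theorem~\ref{mainintro} (equations \eqref{constantAh}--\eqref{constant:scbm}) and bound them case by case using the explicit Riccati solutions of Lemma~\ref{lemma:riccationed}---is exactly the paper's approach. But your analysis of $\htd$ in parts (3) and (4) contains a genuine error which would derail those cases.

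You write that in the convex cases ``the Riccati comparison forces the inward parallel hypersurfaces to stay smooth for the full distance $\hb$ \dots\ so $\htd=\hb$ again.'' This is false. The quantity $\htd=\min\{\hb,m(\beta_+,\kappa_+)\}$ involves the blow-up time of the \emph{comparison} solution $b$ of \eqref{eq:defbeta+}, not any focal time of the actual manifold; smoothness of the real parallel hypersurfaces up to $\hb$ is already encoded in $\hb$ itself. In case (3) one has $\beta_+=0$, $\kappa_+>0$, so $b(\delta)=(\delta-1/\kappa_+)^{-1}$ blows up at $1/\kappa_+$, giving $\htd=\min\{\hb,1/\kappa_+\}$. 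In case (4), $\beta_+=\lambda^2$ gives $m(\lambda^2,\kappa_+)=\lambda^{-1}\arcot(\kappa_+/\lambda)$, and the paper checks the elementary inequality $\lambda/\arcot(\kappa_+/\lambda)\le\sqrt{\lambda^2+\kappa_+^2}$. It is precisely $1/m(\beta_+,\kappa_+)$ that produces the $\kappa_+$ in (3) and the $\sqrt{\lambda^2+\kappa_+^2}$ in (4), via $\constA=n/\htd=n\max\{\hb^{-1},1/m(\beta_+,\kappa_+)\}$. Your claim $\htd=\hb$ would instead yield $\constA=n/\hb$, which is not what is asserted.

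The mechanism you are glossing as ``sharper comparison'' is concrete: one bounds $\cbm(h)$ and $\scbm(h)$ term by term using Lemma~\ref{lemma:rhonsmallerone} (or Lemma~\ref{Lemma:technicalf(x)}), which gives $-(h-\delta)b(\delta)\le 1$ whenever $\beta_+\ge 0$. In (3) and (4), convexity plus the curvature sign force $a(\delta)\le 0$ and $\mu(\delta)\le 0$, so the $a$-term in \eqref{constantAh} contributes nothing and $\cbm\le n-1$; likewise $\scbm=0$ by Remark~\ref{Bvalue0}, whence $\constA=(1+\cbm)/\htd=n/\htd$ and $\constB=1/(2\hb)$. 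The same device explains why the factor $n$ disappears in (1a): there $\beta_+=0$ and $\kappa_+=0$ give $b\equiv 0$, so the $(n-1)$ copies of $b$ in \eqref{constantAh} vanish and only $(h-\delta)a(\delta)\le\hb\lambda$ survives, yielding $\cbm\le\hb\lambda$ and $\constA=(1+\hb\lambda)/\hb=\hb^{-1}+\lambda$. None of these refinements come from a cleaner ``master formula'' with $\alpha_-$ in place of $|\alpha|$; they come from tracking the signs of $a,b,\mu$ directly.
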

We recover the results in \cite{Xi} as a consequence of Theorem~\ref{thm:signedcurvature}.
\begin{cor}{\cite[Theorem 1]{Xi}}\label{Xiong}
Let $M$ be a domain in a complete Riemannian manifold with boundary $\Sigma$. Let $\lambda,\kappa_+>0$. Then 
\begin{enumerate}
\item If $-\lambda^2\leq K\leq 0$ and each principal curvature of $\Sigma$ satisfies $\lambda<\kappa_i<\kappa_+$, then \eqref{ineq:main1} and \eqref{ineq:thm:mainstek} hold with
$$\constA=n\kappa_+\qquad\mbox{and}\qquad{\constB=\frac{1}{2}\kappa_+}.$$
\item If $0<K<\lambda^2$ and each principal curvature of $\Sigma$ satisfies $0<\kappa_i<\kappa_+$, then \eqref{ineq:main1} and \eqref{ineq:thm:mainstek} hold with
$$\constA=n\sqrt{\lambda^2+\kappa_+^2}\quad\text{and}\quad\constB= \frac{1}{2}\sqrt{\lambda^2+\kappa_+^2}.$$
\end{enumerate}
\end{cor}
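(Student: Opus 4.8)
The plan is to deduce the corollary directly from Theorem~\ref{thm:signedcurvature}, parts (3) and (4), the only extra ingredient being an a~priori lower bound on the rolling radius $\hb=\roll(M)$ that the convexity hypotheses provide. Indeed, in case (1) the hypotheses $-\lambda^2\le K\le 0$ and $\lambda<\kappa_i<\kappa_+$ are exactly those of Theorem~\ref{thm:signedcurvature}~(3), so \eqref{ineq:main1} and \eqref{ineq:thm:mainstek} hold with $\constA=n\max\{\kappa_+,\hb^{-1}\}$ and $\constB=\tfrac{1}{2\hb}$; similarly, case (2) falls under Theorem~\ref{thm:signedcurvature}~(4), giving the same inequalities with $\constA=n\max\{\hb^{-1},\sqrt{\lambda^2+\kappa_+^2}\}$ and $\constB=\tfrac{1}{2\hb}$. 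Since the right-hand side of \eqref{ineq:thm:mainstek} is nondecreasing in $\constB$ and the constraint \eqref{ineq:main1} only becomes weaker when $\constA$ is enlarged, it suffices to establish
$$\roll(M)\ge\frac{1}{\kappa_+}\ \text{ in case (1)},\qquad \roll(M)\ge\frac{1}{\sqrt{\lambda^2+\kappa_+^2}}\ \text{ in case (2)},$$
because these force $\max\{\kappa_+,\hb^{-1}\}=\kappa_+$ and $\max\{\hb^{-1},\sqrt{\lambda^2+\kappa_+^2}\}=\sqrt{\lambda^2+\kappa_+^2}$, while $\tfrac{1}{2\hb}$ is then bounded by $\tfrac12\kappa_+$, resp.\ $\tfrac12\sqrt{\lambda^2+\kappa_+^2}$.

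To prove the rolling-radius bounds I would invoke the comparison geometry for parallel hypersurfaces developed in Section~\ref{section:prelim}, bounding $\roll(M)$ below by the focal distance of $\Sigma$. Fix $p\in\Sigma$, let $\gamma$ be the inward unit-speed normal geodesic at $p$, and let $\mu<\kappa_+$ be an eigenvalue of the shape operator of $\Sigma$ at $p$. A focal point of $\Sigma$ along $\gamma$ is the first zero of a Jacobi field $J$ with $J(0)$ tangent to $\Sigma$ and $J'(0)=-\mu J(0)$. In case (1) one has $K\le 0$, so by Rauch comparison $J$ vanishes no earlier than the corresponding Jacobi field in $\R^{n+1}$, i.e.\ no earlier than time $\mu^{-1}\ge\kappa_+^{-1}$. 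In case (2) one has $K<\lambda^2$, so $J$ vanishes no earlier than the model field in the space form of curvature $\lambda^2$, whose first zero occurs at $\tfrac1\lambda\arctan(\lambda/\mu)$; the elementary inequality $\arctan x\ge x/\sqrt{1+x^2}$ for $x\ge 0$ then yields $\tfrac1\lambda\arctan(\lambda/\mu)\ge(\mu^2+\lambda^2)^{-1/2}\ge(\kappa_+^2+\lambda^2)^{-1/2}$. In either case the focal distance of $\Sigma$ is at least the claimed quantity, and combining this with the injectivity of the normal exponential map on the tube of that radius — which follows from the convexity of $\Sigma$ together with the sign condition on $K$ — gives the desired lower bound on $\roll(M)$.

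The delicate point is this last step: the focal-distance estimate by itself is not enough, since one must also exclude that two distinct inward normal geodesics meet before the focal radius. In the nonpositively curved case this follows from convexity of the boundary together with a Cartan--Hadamard-type argument, whereas in the positively curved case (2) it requires a more careful use of the Riccati comparison for the shape operators of the parallel hypersurfaces, and I expect this to be where the real work lies; it is precisely the material on the rolling radius set up in Section~\ref{section:prelim}. Once that bound is available, the corollary is an immediate substitution into Theorem~\ref{thm:signedcurvature}, together with the monotonicity of \eqref{ineq:main1} and \eqref{ineq:thm:mainstek} in the constants $\constA$ and $\constB$.
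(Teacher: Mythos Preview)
Your overall strategy coincides with the paper's: reduce to Theorem~\ref{thm:signedcurvature}, parts~(3) and~(4), and then remove the dependence on $\hb$ by supplying a lower bound for $\roll(M)$; the substitution step and the monotonicity of \eqref{ineq:main1}--\eqref{ineq:thm:mainstek} in $\constA,\constB$ are exactly as in the paper (which treats this corollary together with Corollary~\ref{gXiong}, parts~(2) and~(3)).

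The only difference lies in how the rolling-radius lower bound is obtained. The paper does not derive it; it simply quotes Donnelly--Lee \cite[Theorems~3.11 and~3.22]{DL91}, giving $\hb\ge 1/\kappa_+$ in case~(1) and $\hb\ge\tfrac{1}{\lambda}\arcot(\kappa_+/\lambda)$ in case~(2); the latter combined with the inequality $\lambda/\arcot(\kappa_+/\lambda)\le\sqrt{\lambda^2+\kappa_+^2}$ from the proof of Proposition~\ref{prop31} (equivalent to your $\arctan x\ge x/\sqrt{1+x^2}$) yields $\hb^{-1}\le\sqrt{\lambda^2+\kappa_+^2}$. You instead attempt to reprove these bounds from scratch. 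Your focal-distance computations via Rauch comparison are correct and recover precisely the Donnelly--Lee values. But the step you yourself flag as delicate---ruling out cut points before the focal radius---is \emph{not} supplied by Section~\ref{section:prelim}: that section only gives the \emph{upper} bound $\roll(M)\le m(\alpha,\kappa_-)$ (Remark~\ref{moved}) and comparison for shape operators of parallel hypersurfaces, never a lower bound on the rolling radius. The missing argument is exactly the content of \cite{DL91}. If you replace your sketched cut-locus step by that citation, your proof is the paper's proof verbatim.
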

We refer the reader to Corollary \ref{gXiong} for another situation where $\constA$ and $\constB$ have a simpler expression than those in Theorem~\ref{thm:signedcurvature}.

Let us conclude with the situation where $K\equiv 0$, which is motivated by the Euclidean case from \cite{PS16}.
\begin{theorem}\label{thm:flatspace}
Let $M$ be a smooth compact manifold with nonempty boundary $\Sigma$. Suppose that $M$ is flat ($K\equiv 0$).
\begin{itemize}
\item
 If $\kappa_-\leq\kappa\leq\kappa_+$, then \eqref{ineq:main1} and \eqref{ineq:thm:mainstek} hold with $$\constA=n\max\{\hb^{-1},\kappa_+\}+|\kappa_-|\quad \mbox{and} \quad\constB=\frac{1}{2}(\bar h^{-1}+n\vert \kappa_-\vert).$$
\item If $0\leq\kappa_i<\kappa_+$, then \eqref{ineq:main1} and \eqref{ineq:thm:mainstek} hold with $$\constA=n\max\{\hb^{-1},\kappa_+\}\quad \mbox{and} \quad\constB=\frac{1}{2\hb}.$$
\item If $\kappa_-<\kappa_i\leq 0$, then \eqref{ineq:main1} and \eqref{ineq:thm:mainstek} hold with 
$$
\constA= \bar h^{-1}+\vert \kappa_-|\quad\mbox{and}\quad\constB=\frac{1}{2}(\bar h^{-1}+n\vert \kappa_-\vert).
$$
\end{itemize}
\end{theorem}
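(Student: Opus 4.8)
The plan is to derive Theorem~\ref{thm:flatspace} as a specialization of Theorem~\ref{mainintro} (and Theorem~\ref{rem:weakerH}), by feeding the flatness hypothesis $K\equiv 0$ into the general machinery and then optimizing the constants $\constA$ and $\constB$ using the sharper information available in the flat case. The starting point is the general formula $\constA=\frac{n}{\htd}+\sqrt{|\alpha|+\kappa_-^2}$ and $\constB=\frac{1}{2\hb}+\frac n2\sqrt{|\alpha|+\kappa_-^2}$ from the discussion after Theorem~\ref{mainintro}. When $K\equiv 0$ we may take $\alpha=\beta=0$, so $\sqrt{|\alpha|+\kappa_-^2}=|\kappa_-|$, which already gives $\constB=\frac12(\hb^{-1}+n|\kappa_-|)$, matching the first case, and reduces $\constA$ to $\frac{n}{\htd}+|\kappa_-|$. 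The remaining work is to show that in the flat case the auxiliary radius $\htd$ from \eqref{def:htd} can be taken to be $\min\{\hb,\kappa_+^{-1}\}$ (or more precisely that $n/\htd$ can be replaced by $n\max\{\hb^{-1},\kappa_+\}$): this comes from analyzing the Riccati equation for the principal curvatures of the parallel hypersurfaces $\Sigma_t$, which in flat space becomes $\kappa_i'=\kappa_i^2$, whose solution $\kappa_i(t)=\kappa_i/(1-t\kappa_i)$ stays controlled on $[0,\min\{\hb,1/\kappa_+\})$ when $\kappa_+>0$, and is bounded by $\hb^{-1}$-type quantities regardless. So the flat-space Riccati analysis is where the improvement over the general $\htd$ comes from.

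For the three sub-cases I would then argue as follows. In the first (general pinching $\kappa_-\le\kappa_i\le\kappa_+$) nothing more is needed beyond the above. In the second case, $0\le\kappa_i<\kappa_+$ forces $\kappa_-\ge 0$, hence $|\kappa_-|$ can be dropped from both constants, giving $\constA=n\max\{\hb^{-1},\kappa_+\}$ and $\constB=\frac1{2\hb}$; one must double-check that the mean curvature lower bound $H\ge\kappa_-\ge 0$ is the quantity entering $\constB$ in Theorem~\ref{rem:weakerH}, so that $\constB$ indeed loses its $|\kappa_-|$ term. In the third case, $\kappa_-<\kappa_i\le 0$, the boundary is (weakly) concave; here the relevant Riccati solutions $\kappa_i(t)=\kappa_i/(1-t\kappa_i)$ with $\kappa_i\le 0$ stay in $(\kappa_i,0]$ for all $t>0$, so the parallel hypersurfaces never develop focal points coming from these directions and the effective radius is simply $\hb$ itself, yielding $n/\htd = n\hb^{-1}$; but since also $\max\{\hb^{-1},\kappa_+\}$ is not the right bookkeeping when all $\kappa_i\le 0$, one should see that the coefficient degrades only to $\hb^{-1}$ (not $n\hb^{-1}$) because the sum $\sum\kappa_i(t)$ — which is what actually appears in the Pohozaev-based estimate through the mean curvature of $\Sigma_t$ — is bounded by a single $\hb^{-1}$, explaining the absence of the factor $n$ in $\constA=\hb^{-1}+|\kappa_-|$ in this last case.

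I would organize the proof as: (i) recall from Section~\ref{section:prelim} (or wherever \eqref{def:htd} lives) the definition of $\htd$ and the Riccati comparison for principal curvatures of parallel hypersurfaces; (ii) specialize to $\alpha=\beta=0$ and solve the Riccati equation $u'=u^2$ explicitly; (iii) in each of the three curvature-sign regimes, extract the optimal uniform bound on the principal curvatures (resp. mean curvature) of $\Sigma_t$ for $t\in[0,\hb)$, and on the effective no-focal-point radius; (iv) substitute these bounds into the conclusions of Theorems~\ref{mainintro} and~\ref{rem:weakerH}. The main obstacle is item (iii): getting the constants exactly as stated — in particular understanding when the dimensional factor $n$ appears in $\constA$ (it does when there is a genuinely convex direction contributing a focal-distance constraint, i.e. when $\kappa_+>0$) versus when it does not (all $\kappa_i\le 0$), and correctly tracking whether $|\kappa_-|$ survives in $\constB$ — requires care with the precise form of the Pohozaev estimate from Section~\ref{section:pohozaev} and with which curvature quantity (pointwise principal curvature vs. mean curvature) controls which term. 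Once that bookkeeping is pinned down, each case is a one-line substitution.
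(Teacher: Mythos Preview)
Your plan is essentially the paper's approach: specialize the Riccati comparison to $K\equiv 0$, solve $y'+y^2=0$ explicitly, and feed the resulting bounds into \eqref{constant:CM}. Cases~1 and~2 go through as you describe: with $\alpha=\beta=0$ the general formula after Theorem~\ref{mainintro} gives $\constA=n/\htd+|\kappa_-|$ and $\constB=\tfrac12(\hb^{-1}+n|\kappa_-|)$, and since $\htd=\min\{\hb,1/\kappa_+\}$ one has $n/\htd=n\max\{\hb^{-1},\kappa_+\}$; taking $\kappa_-=0$ in Case~2 kills the extra terms.

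The gap is in Case~3. You correctly note that the general formula gives $n\hb^{-1}+|\kappa_-|$, one factor of $n$ too many, but your explanation for the improvement is wrong: you invoke the mean curvature $\sum_i\kappa_i(t)$ of $\Sigma_t$, whereas the mean curvature enters only the \emph{lower} bound in Lemma~\ref{lemma:firstgeomcontrol} and hence controls $\constB$, not $\constA$. The upper bound there is $1+\sum_{i=2}^n\rho_i-\rho_1$ with $\rho_i=(h-\delta)\kappa_i$, which in the proof of Lemma~\ref{ahestimate} is estimated by $1+(h-\delta)\bigl(a(\delta)-(n-1)b(\delta)\bigr)$; see \eqref{constantAh}. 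In Case~3 one may take $\kappa_+=0$, and then the upper comparison function $b$, solving $b'+b^2=0$ with $b(0)=0$, is identically zero. It is the vanishing of the $(n-1)b(\delta)$ term --- equivalently, the fact that all evolved principal curvatures remain $\le 0$ so that $\sum_{i=2}^n\rho_i\le 0$ --- and not any mean-curvature bound, that removes the factor $n$: one gets $\cbm(h)\le\max_{0\le\delta<h}(h-\delta)a(\delta)\le h|\kappa_-|$, hence $\constA=\hb^{-1}(1+\hb|\kappa_-|)=\hb^{-1}+|\kappa_-|$. The paper avoids this pitfall by working directly from the sharper definition \eqref{constantAh} in all three cases rather than from the simplified formula stated after Theorem~\ref{mainintro}.
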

The proof of Theorem \ref{thm:flatspace} is presented in Section \ref{section:flatmanifolds}.

One could also consider manifolds whose boundary admits a neighbourhood which is isometric to the Riemannian product $[0,L)\times\Sigma$ for some $L>0$, in which case one can take $2\constB=\constA=1/L$. This is discussed in Section (\ref{cylindricalcase}).

\subsection*{Plan of the paper}
The proof of our main result (Theorem \ref{mainintro}) is based on the comparison geometry of principal curvatures of hypersurfaces that are parallel to the boundary. This is presented in Section \ref{section:prelim} following a review of relevant Jacobi fields and Riccati equations. The Pohozaev identity is used in Section \ref{section:pohozaev} to relate the Dirichlet energy of an harmonic function to the $L^2$-norm of its normal derivative. The proof of Theorem \ref{mainintro} is presented in Section \ref{section:mainresults}. In Section \ref{section:signedcurvature} we specialize to various geometrically rigid settings and prove Theorem \ref{thm:signedcurvature}. Here, we give precise values of the constants $\constA,\constB$ occurring in the estimates. In order to do this, we need some $1$-dimensional calculations for specific Riccati equations, which are treated in an appendix at the end of the paper.
In Section \ref{section:examples}, we give various examples to illustrate the necessity of the geometric hypothesis occurring in Theorem \ref{mainintro} and Theorem \ref{thm:signedcurvature}.

\section{Preliminaries from Riemannian geometry}\label{section:prelim}

Let $M$ be a smooth compact Riemannian manifold of dimension $n+1$ with boundary $\Sigma$.
The distance function $f:M\rightarrow\R$ to the boundary $\Sigma$ is given by
$$
f(x)=\dist(x,\Sigma).
$$
Any $s\geq 0$ that is small enough is a regular value of $f$, so that the level sets $\Sigma_s:=f^{-1}(s)$ are submanifolds of $M$, which are called \textit{parallel hypersurfaces}. They are the boundary of $\Omega_s:=\{x\in M\,:\,f(x)\geq s\}$. For $x\in\Sigma_s$, the gradient of the distance function $\grad f(x)$, is the inward normal vector to $\Sigma_s=\partial\Omega_s$:
$$\nu(x):=\grad f(x).$$
In particular, for $x\in\Sigma$, $\nu(x)=\grad f(x)=-\n(x)$. The  distance function $f$ satisfies $|\grad f|=1$, whence the  integral curves of $\grad f$ are geodesics. That is,
$$\nabla_{\grad f}\grad{f}=0.$$

\subsection{Cut locus and rolling radius}
Given $p\in\Sigma$, the exponential map defines a normal geodesic curve $\gamma_p:\R_+\longrightarrow M$, 
$$\gamma_p(s)=E_s(p)=\exp_p(s\nu).$$
The \emph{cut point} $\cut_{\Sigma}(p)$ of $p\in \Sigma$ is the point
$E_{s_0}(p)$, where $s_0>0$ is the first time that $E_s(p)$ stops
minimizing the distance to $\Sigma$. The \emph{cut locus} of $\Sigma$ is
\[\C_{\Sigma} = \{\cut_{\Sigma}(p) : p\in\Sigma\}.\]
The distance between $\Sigma$ and $\C_{\Sigma}$ is called the
\textit{rolling radius}\footnote{It is called the rolling radius
  because any open ball of radius $\le$ $roll(M)$ can roll along
  $\Sigma$ while always remaining a subset of $M$.} of $M$:
\[\roll(M)=\dist(\Sigma,\C_{\Sigma}).\]
Given $h\in (0,\roll(M)]$, define the tubular neighbourhood
$$M_h=\{x\in M: f(x)<h\}.$$
For each $x\in M_h$, there is exactly one nearest point to $x$ in
$\Sigma$ and the exponential map $E_s(p)=\exp_p(s\nu(p))$ defines
a diffeomorphism  between $[0,h)\times\Sigma$ and the tubular
neighbourhood $M_h$.
Moreover, for each $s\in [0,\roll(M))$, the map $E_s:\Sigma\rightarrow\Sigma_s$ is a diffeomorphism.

\subsection{The principal curvatures of parallel hypersurfaces}
For each $s\in[0,\roll (M))$, the \textit{shape operator} associated to the parallel hypersurface $\Sigma_s$ is the endomorphism $S_s:T\Sigma_s\to T\Sigma_s$ defined by 
$$S_s(X)=\nabla_X\grad f.$$
For each tangent vectors $X,Y\in T\Sigma_s$, the following holds:
\begin{equation*} 
 \la S_s(X),Y\ra=\la\nabla_X\grad f,Y\ra=\nabla^2f(X,Y)=\la X,S_s(Y)\ra,
\end{equation*}
where $\la X,Y\ra$ is the Riemannian product and $\nabla^2f$ is the Hessian of $f$. In other words, the Hessian $\nabla^2f$ restricted to $\Sigma_s$ is its second fundamental form. The \emph{principal curvatures}  $\kappa_i(x)$, $i=1,\ldots,n$, of $\Sigma_s$ at point $x$ with respect to outward normal vector $\n(x):=-\grad f(x)$ are the eigenvalues of 
$$-S_s:T_x\Sigma_s\rightarrow T_x\Sigma_s.$$
In other words, the eigenvalues of the Hessian $\nabla^2 f$ at $x\in\Sigma_s$, are 
$$0, -\kappa_1(x),\ldots,-\kappa_n(x).$$
The choice of sign is such that the principal curvatures of the boundary of Euclidean balls are positive.

Our main geometric tools is a comparison estimate for the principal curvatures $\kappa_i$ of the parallel hypersurfaces $\Sigma_s$. This is adapted from the main theorem in \cite{EH90} and from \cite[Proposition 2.3]{Esc87}.

\begin{theorem}[Principal curvature comparison theorem I]\label{thm:shapecomparisonA}
Let $M$ be a smooth compact manifold with nonempty boundary $\Sigma$.
Suppose the sectional curvature satisfies $\alpha\leq K$ and that the principal curvatures of the boundary satisfies 
$\kappa_-\leq\kappa_i$.
Let $a:[0,m(\alpha,\kappa_-))\rightarrow\R$ be the solution of
\begin{equation}
  a'+a^2+\alpha=0, \quad a(0)=-\kappa_-,
\end{equation}
with maximal existence time $m(\alpha,\kappa_-)$. Then $m(\alpha,\kappa_-)\geq\roll(M)$ and for each
$\delta\in(0,\roll(M))$ the shape operator $S_\delta$ associated to the parallel hypersurface $\Sigma_{\delta}$ satisfies
$$S_\delta\leq a(\delta)I.$$
That is, each principal curvature of $\Sigma_{\delta}$ satisfies
$\kappa_i(\delta)\geq -a(\delta)$.
\end{theorem}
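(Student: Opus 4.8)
The plan is to derive the principal curvature comparison from the Riccati equation satisfied by the shape operator along the normal geodesics emanating from $\Sigma$, together with a scalar comparison (Riccati comparison / Sturm comparison). Fix $p\in\Sigma$ and let $\gamma_p(s)=\exp_p(s\nu)$ be the unit-speed normal geodesic. Along $\gamma_p$, for $s\in[0,\roll(M))$ the shape operator $S_s$ of $\Sigma_s$ is a well-defined self-adjoint endomorphism of $T_{\gamma_p(s)}\Sigma_s$, and a standard computation (using $\nabla_{\grad f}\grad f=0$ and the definition $S_s X=\nabla_X\grad f$) shows that $S_s$ satisfies the matrix Riccati equation
\begin{equation*}
S_s'+S_s^2+R_s=0,
\end{equation*}
where $R_s=R(\cdot,\grad f)\grad f$ is the (self-adjoint) curvature endomorphism, with initial condition $S_0$ equal to the shape operator of $\Sigma$. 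The hypothesis $\alpha\leq K$ translates into $R_s\geq \alpha I$ (in the sense of quadratic forms) on $M_\hb$, since for a unit vector $X\perp\grad f$ one has $\la R_s X,X\ra = K(X,\grad f)|X|^2\geq\alpha$. The hypothesis $\kappa_-\leq\kappa_i$ on $\partial M$ means precisely $S_0=-(\text{second fundamental form w.r.t. }\n)\leq -\kappa_- I$, i.e. $S_0\leq a(0)I$.

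Next I would invoke the Riccati comparison principle for the ordered pair (matrix solution, scalar solution). Let $a$ solve the scalar Riccati equation $a'+a^2+\alpha=0$, $a(0)=-\kappa_-$, on its maximal interval $[0,m(\alpha,\kappa_-))$. The claim is that on $[0,\min\{m(\alpha,\kappa_-),\roll(M)\})$ we have $S_s\leq a(s)I$. The standard argument: set $U_s = a(s)I - S_s$, which is self-adjoint with $U_0\geq 0$; a short computation gives
\begin{equation*}
U_s' = a'(s)I - S_s' = -(a(s)^2+\alpha)I + S_s^2 + R_s \geq -(a(s)^2)I + S_s^2 = -(a(s)I-S_s)(a(s)I+S_s) \;(\text{symmetrized}),
\end{equation*}
using $R_s\geq\alpha I$; more carefully one writes $U_s' \geq S_s^2 - a(s)^2 I = -\tfrac12\bigl(U_s(a(s)I+S_s)+(a(s)I+S_s)U_s\bigr)$, a linear ODE inequality in $U_s$ of the form $U_s'\geq -(B_s U_s + U_s B_s)/2$ with $B_s=a(s)I+S_s$ bounded. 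A Gronwall-type / maximum-principle argument for symmetric-matrix Riccati inequalities (see \cite{EH90,Esc87}) then shows that $U_0\geq 0$ forces $U_s\geq 0$ for all $s$ in the common interval of existence, i.e. $S_s\leq a(s)I$.

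Finally I would address the existence-time claim $m(\alpha,\kappa_-)\geq\roll(M)$: since $S_s$ exists and is smooth on all of $[0,\roll(M))$ (because $E_s$ is a diffeomorphism there, so $\Sigma_s$ is a genuine hypersurface with smooth shape operator), the bound $S_s\leq a(s)I$ obtained above prevents the scalar solution $a$ from blowing up to $+\infty$ before time $\roll(M)$; combined with the fact that $a$ is decreasing when large (the $-a^2$ term dominates), $a$ cannot escape to $-\infty$ in finite time on that interval either. Hence $a$ extends past every $\delta<\roll(M)$, giving $m(\alpha,\kappa_-)\geq\roll(M)$, and the stated conclusion $\kappa_i(\delta)\geq -a(\delta)$ is just the eigenvalue reformulation of $S_\delta\leq a(\delta)I$ (recall the principal curvatures of $\Sigma_\delta$ are the eigenvalues of $-S_\delta$). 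The main obstacle is the matrix Riccati comparison step: making the passage from $R_s\geq\alpha I$ and $U_0\geq 0$ to $U_s\geq 0$ rigorous requires care with the non-commutativity of $S_s$ and $a(s)I$ — though here $a(s)I$ is central, which is exactly why comparison against a scalar (rather than matrix) lower curvature bound works cleanly — and with the precise form of the Gronwall estimate for symmetric operator-valued ODE inequalities; this is where I would lean on \cite{EH90} and \cite[Proposition 2.3]{Esc87}.
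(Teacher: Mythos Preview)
Your approach is essentially the paper's: both reduce to the Riccati comparison principle for self-adjoint-operator-valued solutions. The paper simply quotes the comparison theorem of Eschenburg--Heintze \cite{EH90} (stated there as Theorem~\ref{thmriccati}) as a black box, applying it with $R_1=R$, $R_2=\alpha I$, $S_1(0)=S(0)$, $S_2(0)=-\kappa_-I$, whereas you try to unpack that box via $U_s=a(s)I-S_s$ and a Gronwall-type argument. Since $a(s)I$ is central, your linear differential inequality $U_s'\ge -\tfrac12(B_sU_s+U_sB_s)$ is clean and is exactly how the scalar-versus-matrix case goes.

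The one place your argument wobbles is the existence-time claim $m(\alpha,\kappa_-)\ge\roll(M)$. You have the two mechanisms reversed: the inequality $S_s\le a(s)I$ gives a \emph{lower} bound on $a(s)$ (the largest eigenvalue of $S_s$), hence rules out blow-up to $-\infty$; it is the ODE structure $a'=-a^2-\alpha$ that rules out blow-up to $+\infty$. There is also a mild circularity in using $S_s\le a(s)I$ to extend $a$, since that inequality has only been established on the common existence interval; one needs an open--closed continuation argument to close this. The paper avoids all of this: Theorem~\ref{thmriccati} already includes the conclusion $s_1\le s_2$ on maximal existence times, and since $S_s$ is smooth on $[0,\roll(M))$ (the normal exponential is a diffeomorphism there, so no focal points occur), one has $s_1\ge\roll(M)$ and hence $m(\alpha,\kappa_-)=s_2\ge s_1\ge\roll(M)$ immediately.
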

\begin{rem}\label{moved}
Theorem \ref{thm:shapecomparisonA} gives an immediate upper bound for $\roll(M)$:
\[\roll(M)\le m(\alpha,\kappa_-).\]
In Lemma \ref{lemma:riccationed} below, an explicit formula for $m(\alpha,\kappa_-)$ is given. It could be compared with the result by Donnelly  and Lee in {\cite[Theorem 3.1]{DL91}} on estimates for $\roll(M)$  on strictly convex domains, i.e. all eigenvalues of $-S$ are positive.
\end{rem}

{We also need a lower bound for the shape operators $S_\delta$ of parallel hypersurfaces in a neighbourhood of the boundary $\Sigma$. This requires further constraints on the neighbourhood.}

\begin{theorem}[Principal curvature comparison theorem II]\label{thm:shapecomparisonB}
Let $M$ be a smooth compact manifold with nonempty boundary $\Sigma$.
Suppose the sectional curvature satisfies $K\leq\beta$
 and that the principal curvatures of the boundary satisfies 
$\kappa_i\leq\kappa_+$.
Let $\beta_+:=\max\{0,\beta\}$, and
let $b:[0,m(\beta_+,\kappa_+))\rightarrow\R$ be the solution of
\begin{equation}\label{eq:defbeta+}
  b'+b^2+\beta_+=0, \quad b(0)=-\kappa_+,
\end{equation}
with maximal existence time $m(\beta_+,\kappa_+)$. Let 
\begin{gather}\label{def:htd}
  \htd:=\min\{m(\beta_+,\kappa_+),\roll(M)\}.
\end{gather}
Then for each
$\delta\in(0,\htd)$ the shape operator $S_\delta$ associated to the parallel hypersurface $\Sigma_{\delta}$ satisfies
$$S_\delta\geq b(\delta)I.$$
That is, each principal curvatures of $\Sigma_{\delta}$ satisfies $\kappa_i\leq-b(\delta)$.
\end{theorem}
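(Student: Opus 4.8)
The plan is to prove this via the Riccati comparison theory that governs the shape operators of parallel hypersurfaces, mirroring the structure of the proof of Theorem \ref{thm:shapecomparisonA} but reversing the inequalities.

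First I would fix $p\in\Sigma$ and follow the unit-speed normal geodesic $\gamma_p(s)=E_s(p)$ into $M$. Along $\gamma_p$, the shape operator of the parallel hypersurface $\Sigma_s$, restricted to the parallel translates of $T_p\Sigma$, satisfies the matrix Riccati equation $S_s' + S_s^2 + R_s = 0$, where $R_s$ is the curvature operator $X\mapsto R(X,\gamma_p')\gamma_p'$ and the initial condition is $S_0 = -(\text{II of }\Sigma) \leq -\kappa_+ I$ (in the sign convention of the paper, where $-S_0$ has the principal curvatures as eigenvalues). The sectional curvature bound $K\leq\beta$ gives $R_s \leq \beta I$, and since $\beta \leq \beta_+ := \max\{0,\beta\}$ we also get $R_s \leq \beta_+ I$. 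The scalar comparison function $b$ solves $b' + b^2 + \beta_+ = 0$ with $b(0) = -\kappa_+$, i.e. the Riccati equation with the \emph{worst-case} (largest) curvature term, so I expect $S_s \geq b(s) I$ to propagate as long as $b$ exists and $\Sigma_s$ is a smooth hypersurface.

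The key step is the matrix Riccati comparison itself. I would set $T_s := S_s - b(s)I$, so $T_0 \leq 0$ is \emph{wrong} in sign — rather $T_0 = S_0 - b(0)I = -\text{II} + \kappa_+ I \geq 0$. Differentiating, $T_s' = S_s' - b'(s)I = -S_s^2 - R_s + (b(s)^2 + \beta_+)I = -(S_s^2 - b(s)^2 I) - (R_s - \beta_+ I)$. The second term is $\geq 0$ since $R_s \leq \beta_+ I$. For the first term I would write $S_s^2 - b(s)^2 I = S_s T_s + b(s) T_s + \text{(symmetrization correction)}$; more cleanly, using that $S_s$ and $b(s)I$ commute with scalars, $S_s^2 - b(s)^2 I = \tfrac12(S_s + b(s)I)T_s + \tfrac12 T_s(S_s + b(s)I)$. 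So $T_s$ satisfies a linear matrix equation of the form $T_s' + \tfrac12(A_s T_s + T_s A_s) = P_s$ with $A_s = S_s + b(s)I$ symmetric and $P_s = -(R_s - \beta_+ I) \geq 0$. By the standard fact that such a linear flow preserves the cone of positive semidefinite matrices (conjugate by the solution of $\Phi_s' = -\tfrac12 A_s\Phi_s$, $\Phi_0 = I$, to turn it into $(\Phi_s^* T_s \Phi_s)' = \Phi_s^* P_s \Phi_s \geq 0$), and since $T_0 \geq 0$, we conclude $T_s \geq 0$, i.e. $S_s \geq b(s)I$, for all $s$ in the common interval of existence.

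Finally I would handle the interval of validity: the argument runs on $[0,\min\{m(\beta_+,\kappa_+),\roll(M)\}) = [0,\htd)$, since for $s<\roll(M)$ the map $E_s$ is a diffeomorphism so $S_s$ is a genuine smooth symmetric operator on $T\Sigma_s$ (no focal points yet), and for $s < m(\beta_+,\kappa_+)$ the comparison function $b$ is defined; the inequality $S_s \geq b(s)I$ in fact shows $S_s$ stays bounded below, consistent with no earlier blow-up. Translating to principal curvatures: the eigenvalues of $-S_\delta$ are the $\kappa_i(\delta)$, so $S_\delta \geq b(\delta)I$ is equivalent to $-S_\delta \leq -b(\delta)I$, i.e. $\kappa_i(\delta) \leq -b(\delta)$ for each $i$. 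The main obstacle is getting the matrix Riccati comparison right with the paper's sign conventions — in particular keeping straight that here we need a \emph{lower} bound on $S$ driven by an \emph{upper} bound on curvature and the \emph{largest} admissible boundary principal curvature $\kappa_+$, which is exactly the mirror image of Theorem \ref{thm:shapecomparisonA}; the use of $\beta_+$ rather than $\beta$ is the one subtlety, needed so that the comparison ODE does not itself blow up before $\roll(M)$ when $\beta<0$.
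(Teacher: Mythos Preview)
Your argument follows the same Riccati-comparison strategy as the paper; the only difference is that the paper invokes the matrix Riccati comparison theorem of Eschenburg--Heintze (Theorem~\ref{thmriccati}) as a black box with $R_1=\beta_+ I$, $R_2=R$, $S_1(0)=-\kappa_+ I$, $S_2(0)=S(0)$, whereas you unpack a direct proof of that comparison via the conjugation trick. Two minor corrections are worth noting. First, the sign in your conjugating ODE should be $\Phi_s' = +\tfrac12 A_s\Phi_s$ rather than $-\tfrac12 A_s\Phi_s$: with the plus sign one gets $(\Phi_s^* T_s \Phi_s)' = \Phi_s^*\bigl[T_s' + \tfrac12(A_sT_s+T_sA_s)\bigr]\Phi_s = \Phi_s^* P_s \Phi_s \geq 0$, which is what you need. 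Second, your closing explanation for why $\beta_+$ is used instead of $\beta$ is not the actual reason: when $\beta<0$, replacing $\beta_+$ by $\beta$ would \emph{lengthen}, not shorten, the maximal existence interval of $b$ (cf.\ Lemma~\ref{lemma:riccationed}). The paper remarks that the theorem holds equally well with $\beta$; the choice of $\beta_+$ is made only because the downstream applications (Lemma~\ref{lemma:eigenvaluehessianeta} and Lemma~\ref{lemma:rhonsmallerone}) require comparison with a model space of nonnegative curvature.
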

\begin{rem}
Theorem \ref{thm:shapecomparisonB} also holds with $\beta_+$ replaced by $\beta$, so that $\htd$ can be replaced by
	$\min\{m(\beta,\kappa_+),\roll(M)\}.$
	 However, in our applications of Theorem \ref{thm:shapecomparisonB}, it is required that we use comparison with a space of non-negative sectional curvature. See
	Lemma~\ref{lemma:eigenvaluehessianeta} and Lemma~\ref{lemma:rhonsmallerone}.
\end{rem}
\begin{rem}
 Theorems \ref{thm:shapecomparisonA} and \ref{thm:shapecomparisonB} compare the evolution of the maximal and minimal principal curvatures of parallel hypersurfaces to those of umbilical hypersurfaces in model spaces of constant curvature $K\equiv\alpha,\beta_+$. Indeed, the functions $-a$ represent the principal curvatures of parallel umbilical hypersurfaces in a space form of constant curvature $K\equiv\alpha$, and similarly for the functions $-b$, for constant curvature $K\equiv\beta_+$.
\end{rem}

In our proofs, we will use mostly the following corollary in combination with explicit formulas for $a(s)$ and $b(s)$.
\begin{corollary}\label{coro:curvaturebounds}
Let $M$ be a smooth compact manifold with nonempty boundary $\Sigma$.
Suppose the sectional curvature satisfies $\alpha\leq K\leq\beta$ and that each principal curvatures along the boundary satisfies $\kappa_-\leq\kappa_i\leq\kappa_+.$
 Then for each $\delta\in(0,\htd)$ the principal curvatures of the parallel hypersurface $\Sigma_{\delta}$ satisfies
$$-a(\delta)\le\kappa_i\le-b(\delta).$$
Moreover, the left-hand side inequality holds for each $\delta\in(0,\bar h)$.
\end{corollary}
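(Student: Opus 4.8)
The plan is to obtain Corollary~\ref{coro:curvaturebounds} as a direct consequence of the two principal curvature comparison theorems, with a small amount of extra work to check the stated ranges of $\delta$. First I would apply Theorem~\ref{thm:shapecomparisonA}: under the hypothesis $\alpha\leq K$ and $\kappa_-\leq\kappa_i$ on $\Sigma$, it gives $S_\delta\leq a(\delta)I$ for each $\delta\in(0,\roll(M))=(0,\bar h)$, where $a$ solves $a'+a^2+\alpha=0$, $a(0)=-\kappa_-$. Since the principal curvatures of $\Sigma_\delta$ are the eigenvalues of $-S_\delta$, this is exactly $\kappa_i(\delta)\geq -a(\delta)$, valid on the full interval $(0,\bar h)$. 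This establishes the left-hand inequality and the final sentence of the statement.

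Next I would apply Theorem~\ref{thm:shapecomparisonB}: under $K\leq\beta$ and $\kappa_i\leq\kappa_+$ on $\Sigma$, it gives $S_\delta\geq b(\delta)I$ for $\delta\in(0,\htd)$, where $b$ solves $b'+b^2+\beta_+=0$, $b(0)=-\kappa_+$, and $\htd=\min\{m(\beta_+,\kappa_+),\roll(M)\}$. Passing to eigenvalues of $-S_\delta$ gives $\kappa_i(\delta)\leq -b(\delta)$, valid on $(0,\htd)$. Combining this with the bound from Theorem~\ref{thm:shapecomparisonA} on the smaller interval $(0,\htd)\subseteq(0,\bar h)$ yields $-a(\delta)\leq\kappa_i\leq -b(\delta)$ for all $\delta\in(0,\htd)$, which is the main assertion.

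The only genuinely nontrivial point — and the thing I would want to state carefully rather than treat as obstacle — is the consistency of the hypotheses: one must check that the pinching $\alpha\leq K\leq\beta$ with $\kappa_-\leq\kappa_i\leq\kappa_+$ is exactly what is needed to invoke both theorems simultaneously, and that $a(\delta)$ and $b(\delta)$ are both defined on $(0,\htd)$. The latter follows since $m(\alpha,\kappa_-)\geq\roll(M)\geq\htd$ by Theorem~\ref{thm:shapecomparisonA}, while $m(\beta_+,\kappa_+)\geq\htd$ by the definition \eqref{def:htd} of $\htd$. No serious obstacle is expected: the corollary is purely a repackaging of the two comparison theorems, and the proof amounts to a couple of lines invoking them and then translating the operator inequalities $b(\delta)I\leq S_\delta\leq a(\delta)I$ into the eigenvalue statement for $-S_\delta$.
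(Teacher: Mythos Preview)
Your proposal is correct and matches the paper's approach: the corollary is presented there without a separate proof, as an immediate consequence of Theorems~\ref{thm:shapecomparisonA} and~\ref{thm:shapecomparisonB}, which is exactly what you do. Your extra care in verifying that $a$ and $b$ are defined on $(0,\htd)$ is a welcome detail that the paper leaves implicit.
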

%

Theorem \ref{thm:shapecomparisonA} and Theorem \ref{thm:shapecomparisonB} are sufficient to prove a version of Theorem \ref{mainintro}. Nevertheless, the following mean curvature comparison theorem leads to better bounds on the constant $\constB$.
\begin{theorem}[Mean curvature comparison theorem]\label{thm:shapecomparisonC} Let $M$ be a smooth compact manifold with nonempty boundary $\Sigma$.
Suppose the Ricci curvature satisfies $ \Ric\ge n\alpha$ and that the mean curvature $H=-\frac{\tr S_0}{n}$ of the boundary satisfies 
$H\ge \kappa_-$.
Let $\mu:[0,m(\alpha,\kappa_-))\rightarrow\R$ be the solution of
\begin{equation}
  \mu'+\mu^2+\alpha=0, \quad \mu(0)=-\kappa_-,
\end{equation}
with maximal existence time $m(\alpha,\kappa_-)$. Then $m(\alpha,\kappa_-)\geq\hb$ and for each
$\delta\in(0,\hb)$ the mean curvature of the parallel hypersurface $\Sigma_{\delta}$ satisfies
$$H(\delta)>-\mu(\delta).$$
\end{theorem}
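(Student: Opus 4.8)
The plan is to prove the Mean Curvature Comparison Theorem (Theorem~\ref{thm:shapecomparisonC}) by the standard Riccati/Bochner technique, following the same scheme used for Theorem~\ref{thm:shapecomparisonA} but tracing only the \emph{trace} of the shape operator rather than the full operator. First I would fix a point $p\in\Sigma$ and parametrise the unit-speed normal geodesic $\gamma_p(s)=E_s(p)$ emanating inward; along it, the shape operators $S_s$ of the parallel hypersurfaces $\Sigma_s$ satisfy the matrix Riccati equation $S_s' + S_s^2 + R_s = 0$, where $R_s(X) = R(X,\grad f)\grad f$ is the directional curvature operator, with initial condition $S_0$ equal to the (signed) shape operator of $\Sigma$ at $p$. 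Taking the trace and writing $H(s) = -\tfrac1n \tr S_s$ gives the scalar equation
\begin{equation*}
n H'(s) = \tr(S_s^2) + \tr R_s = \tr(S_s^2) + \Ric(\grad f,\grad f).
\end{equation*}
The two inputs we feed in are $\Ric(\grad f,\grad f)\ge n\alpha$ (hypothesis) and the Cauchy--Schwarz inequality $\tr(S_s^2)\ge \tfrac1n(\tr S_s)^2 = n H(s)^2$, valid since $S_s$ is symmetric. Hence $H'(s) \ge H(s)^2 + \alpha$.

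Next I would compare $H(s)$ with the solution of the associated ODE. Set $\mu$ to be the solution of $\mu'+\mu^2+\alpha = 0$, $\mu(0)=-\kappa_-$, on $[0,m(\alpha,\kappa_-))$. Since $H(0)\ge\kappa_- = -\mu(0)$ and $H$ satisfies the differential \emph{inequality} $H' \ge H^2+\alpha$ while $-\mu$ satisfies the corresponding equality $(-\mu)' = -\mu' = \mu^2+\alpha = (-\mu)^2+\alpha$, a Riccati comparison (Gr\"onwall-type) argument gives $H(s) \ge -\mu(s)$ as long as both are defined. The inequality is strict for $s>0$ unless it is an equality throughout, which in turn would force $S_s$ to be a scalar multiple of the identity and the curvature term to be extremal; a short argument (or simply noting that if $H(0)>\kappa_-$ strict inequality is immediate, and that in the umbilic/rigid case the inequality still holds non-strictly and one invokes the openness of the inequality under the flow) upgrades "$\ge$" to "$>$" for $\delta\in(0,\hb)$. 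I should be slightly careful about whether strictness is genuinely claimed: the cleanest route is to observe that $w := H + \mu$ satisfies $w' \ge w\,(H - \mu) = w\,(H + \mu - 2\mu)$, a linear differential inequality in $w$ with $w(0)\ge 0$, so $w$ stays $\ge 0$; and if $w$ ever returns to $0$ it stays $0$, but then running the argument backward contradicts $w(0)\ge 0$ unless $w\equiv 0$ on an initial interval, which the rigidity case excludes — giving $H(\delta) > -\mu(\delta)$.

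For the claim $m(\alpha,\kappa_-)\ge\hb$: the parallel hypersurfaces $\Sigma_\delta$ are smooth embedded hypersurfaces for all $\delta\in(0,\roll(M))$ by the definition of the rolling radius, so $S_\delta$ — hence $H(\delta)$ — stays finite on $[0,\hb)$; since $H(\delta)\ge -\mu(\delta)$ and $\mu$ blows up to $-\infty$ at the finite time $m(\alpha,\kappa_-)$ (when it is finite), we cannot have $m(\alpha,\kappa_-) < \hb$, for otherwise $-\mu(\delta)\to+\infty$ would force $H(\delta)\to+\infty$ before $\Sigma$ reaches its cut locus, a contradiction. (When $\mu$ exists for all time the statement is vacuous.)

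The main obstacle, and the only place where real care is needed, is the passage from the differential inequality $H'\ge H^2+\alpha$ to the pointwise comparison $H\ge-\mu$ together with the \emph{strict} inequality claimed in the statement: one must make sure the comparison is applied on the correct side (a \emph{lower} bound on $H$ comes from a \emph{lower} bound on $\Ric$ via the \emph{sign} of the $\tr(S^2)$ term, which is why the proof of Theorem~\ref{thm:shapecomparisonA} must be adapted rather than quoted verbatim), and that the maximal existence interval of the comparison ODE controls, rather than is controlled by, the rolling radius. Everything else is a routine trace computation and a one-dimensional Riccati comparison, for which I would cite the appendix's analysis of the ODE $\mu'+\mu^2+\alpha=0$ for the explicit form of $m(\alpha,\kappa_-)$ and of $\mu$ itself.
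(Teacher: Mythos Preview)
Your approach is correct and essentially identical to the paper's: trace the matrix Riccati equation $S'+S^2+R=0$, apply the Cauchy--Schwarz inequality $n\,\tr(S^2)\ge(\tr S)^2$ together with $\Ric\ge n\alpha$ to obtain a scalar Riccati inequality for $\tr S/n$, and then compare with the model ODE $\mu'+\mu^2+\alpha=0$. The paper's own proof is much terser --- it simply writes down the traced inequality and defers the one-dimensional comparison (and implicitly the strictness) to citations of Karcher and Eschenburg --- whereas you spell out the Gr\"onwall step, the blow-up argument for $m(\alpha,\kappa_-)\ge\hb$, and correctly flag the passage to \emph{strict} inequality as the only delicate point.
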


%

\subsection{Jacobi fields and Riccati equations}
For the convenience of the reader, we explain briefly in this section how Theorem \ref{thm:shapecomparisonA} and Theorem \ref{thm:shapecomparisonB} are direct consequences of \cite{Esc87} and \cite{EH90}.

Given $v\in T_{p_0}\Sigma$, let $p:(-\epsilon,\epsilon)\rightarrow\Sigma$ be a smooth curve with $p(0)=p_0$ and $p'(0)=v$. Then 
$$J(s):=\frac{d}{dt}E_s(p(t))|_{t=0}\in T\Sigma_s$$
defines a Jacobi field along the normal geodesic $\gamma(s):=E_s(p_0)$. 
This means that $J(s)$ satisfies the Jacobi equation
\begin{gather}\label{equation:jacobi}
J''(s)+R_\nu(J(s))=0,
\end{gather}
where $R_\nu(J(s)):=R(J(s),\nu)\nu$ is the curvature tensor in the normal direction $\nu=\grad f$ to $\Sigma_s$. The field $J$ satisfies the initial conditions
$$J(0)=v, \qquad {J}'(0)=S(v).$$
The shape operator is a tensor in $T\Sigma\otimes T^\star\Sigma$, which satisfies
\begin{gather}\label{evolutionshapejacobi}
S_s(J(s))=J'(s).
\end{gather}
Its covariant derivative is given by
\begin{gather}\label{covariantDXS}
(D_XS)V=D_X(SV)-SD_XV.
\end{gather}
Differentiating \eqref{evolutionshapejacobi} and using \eqref{covariantDXS} and the Jacobi equation \eqref{equation:jacobi} leads to the Riccati equation 
\begin{equation}
\label{riequ1}{S}_s'(J(s))+S_s^2(J(s))+R_\nu(J(s))=0.
\end{equation}
  Given $p_0\in\Sigma$, consider the integral geodesic curve $\gamma:[0,\cut_{\Sigma}(p_0)]\longrightarrow M$ of $\grad f$ starting at $\gamma(0)=p_0$. Identifying vectors in $T_{\gamma(s)}M$ with $T_{p_0}M$ via parallel transport along $\gamma$, we  can consider $S_s$ and $R_\nu$ as endomorphisms on a single vector space $T_{p_0}\Sigma$. They satisfy the following matrix-valued Riccati equation in $\mbox{End}(T_{p_0}\Sigma)$:
\begin{equation}\label{mriccati}
S'(s)+S^2(s)+R(s)=0.
\end{equation}
See \cite[Chapter 3]{Gray} for an enlightening discussion of Riccati equations for shape operators.

Let $E$ be a finite-dimensional real vector space with an euclidean inner product $\la\cdot,\cdot\ra$. Let $S(E)$ be the space of self-adjoint endomorphisms over $E$. For $A,B\in S(E)$, we say $A\le B$ if $B-A$ is positive semi definite.
Then, we have
\begin{theorem}[Riccati comparison theorem \cite{EH90}]\label{thmriccati}
  \label{riccati}
  Let $R_1,R_2 : \R\to S(E)$ be smooth curves with $R_1 \ge R_2$. Let  $S_i : [s_0, s_i)\to S(E)$ be a solution of
    \begin{equation}\label{riequ}S_i' +S_i^2 +R_i = 0,\quad i=1,2 \end{equation}
    with maximal  existence time $s_i \in(s_0,\infty]$. Assume that $S_1(s_0) \le S_2(s_0)$. Then $s_1 \le s_2$ and $S_1(s) \le S_2(s)$ on $(s_0, s_1)$.
\end{theorem}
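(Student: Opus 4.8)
The plan is to reduce the statement to an ODE comparison by testing the difference $S_2-S_1$ against vectors. First I would set $D(s):=S_2(s)-S_1(s)$ on the common interval $[s_0,\min\{s_1,s_2\})$. Subtracting the two equations in \eqref{riequ} gives
\begin{equation*}
D' = S_1^2 - S_2^2 + R_1 - R_2 = -\bigl(S_1 D + D S_2\bigr) + (R_1-R_2),
\end{equation*}
where I have written $S_2^2-S_1^2 = S_1(S_2-S_1)+(S_2-S_1)S_2 = S_1 D + D S_2$. By hypothesis $R_1-R_2\ge 0$ and $D(s_0)\ge 0$, and the goal is to propagate $D(s)\ge 0$ forward. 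The standard device is to transport by the (non-symmetric) fundamental solution: let $\Phi(s)$ solve $\Phi' = -S_1\Phi$, $\Phi(s_0)=I$, which is invertible for all $s$ in the interval since $S_1$ is smooth there. A short computation shows that $\Psi(s):=\Phi(s)^{*}D(s)\Phi(s)$ satisfies
\begin{equation*}
\Psi' = \Phi^{*}\bigl(D' + S_1 D + D S_1\bigr)\Phi = \Phi^{*}\bigl((R_1-R_2) + D(S_1-S_2)\bigr)\Phi = \Phi^{*}(R_1-R_2)\Phi - \Phi^{*} D^2 \Phi.
\end{equation*}
Here I used $D S_1 - D S_2 = -D^2$. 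Since $\Phi^{*}(R_1-R_2)\Phi\ge 0$ and $\Phi^{*}D^2\Phi\ge 0$, this is a matrix Riccati-type inequality $\Psi' \ge -\Phi^{*}D^2\Phi$, but the cleaner route is to observe that $\Phi^{*}D^2\Phi = \Psi(\Phi^{-1})^{*}(\Phi^{-1})^{*}\cdots$ is awkward; instead I would argue directly with a continuity/first-exit argument.

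The main step, then, is the following claim: if $\Psi(s_0)\ge 0$ and $\Psi$ solves $\Psi' = Q(s) - \Psi\, A(s)\,\Psi$ with $Q(s)\ge 0$ (and $A(s)=(\Phi\Phi^{*})^{-1}\ge 0$, coming from rewriting $\Phi^{*}D^2\Phi=\Psi(\Phi\Phi^{*})^{-1}\Psi$), then $\Psi(s)\ge 0$ on the whole interval. To prove this I would use a barrier: for $\varepsilon>0$ let $\Psi_\varepsilon$ solve the same equation with initial value $\Psi_\varepsilon(s_0)=\Psi(s_0)+\varepsilon I$; by continuous dependence $\Psi_\varepsilon\to\Psi$ uniformly on compact subintervals as $\varepsilon\to 0$, so it suffices to show $\Psi_\varepsilon(s)>0$ (positive definite) throughout. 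Suppose not, and let $s_*$ be the first time $\Psi_\varepsilon$ has a nontrivial kernel, with $0\neq v\in\ker\Psi_\varepsilon(s_*)$. Then $s\mapsto \langle \Psi_\varepsilon(s)v,v\rangle$ is nonnegative on $[s_0,s_*]$, vanishes at $s_*$, hence has nonpositive derivative there; but
\begin{equation*}
\frac{d}{ds}\Big|_{s=s_*}\langle \Psi_\varepsilon v,v\rangle = \langle Q(s_*)v,v\rangle - \langle \Psi_\varepsilon(s_*)A(s_*)\Psi_\varepsilon(s_*)v,v\rangle = \langle Q(s_*)v,v\rangle \ge 0,
\end{equation*}
since $\Psi_\varepsilon(s_*)v=0$. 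If $\langle Q(s_*)v,v\rangle>0$ this is an immediate contradiction; the borderline case $\langle Q(s_*)v,v\rangle=0$ needs the standard refinement (differentiate $\langle\Psi_\varepsilon(s)v,v\rangle$ twice, or note that $\Psi_\varepsilon(s_*)\ge0$ with $\langle\Psi_\varepsilon v,v\rangle$ having a minimum forces $\Psi_\varepsilon(s_*)A(s_*)\Psi_\varepsilon(s_*)$ and the derivative bookkeeping to still give a contradiction after one more differentiation), which I would spell out carefully. This yields $\Psi(s)\ge 0$, hence $D(s)\ge 0$, i.e. $S_1(s)\le S_2(s)$, on $[s_0,\min\{s_1,s_2\})$.

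It remains to show $s_1\le s_2$. Here the point is that a solution of a matrix Riccati equation $S_i'+S_i^2+R_i=0$ leaves every compact region only by blowing up, and from $S_1(s)\le S_2(s)$ together with a lower bound for $S_1$ we get two-sided control: indeed, applying the already-proven comparison in the other direction to a solution of $\tilde S' + \tilde S^2 + R_1 = 0$ that stays bounded — or more simply, noting that on $[s_0,s_2)$ the curve $S_2$ is bounded on compacta, so $S_1\le S_2$ keeps $S_1$ bounded above, while a symmetric argument bounding $S_1$ below (comparing $-S_1$-type quantities, using that $R_1$ is smooth hence bounded on compacta) shows $S_1$ cannot blow up before $s_2$. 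Therefore $s_1\ge s_2$ is impossible to fail... more precisely, if $s_1<s_2$ then $\|S_1(s)\|\to\infty$ as $s\to s_1^-$, contradicting the uniform two-sided bound $b(s)I\le S_1(s)\le S_2(s)$ valid on $[s_0,s_1]\subset[s_0,s_2)$ where $b(s)$ solves the scalar Riccati $b'+b^2+\|R_1\|_{L^\infty([s_0,s_1])}=0$ with suitable initial data. Hence $s_1\le s_2$, completing the proof.

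The main obstacle I anticipate is the borderline case in the first-exit argument (when $\langle Q v,v\rangle$ vanishes at the putative first contact time): the naive sign computation is inconclusive and one must either carry out a second-order expansion of $s\mapsto\langle\Psi_\varepsilon(s)v,v\rangle$ near $s_*$ or invoke a Gronwall-type estimate on $\Psi_\varepsilon^{-1}$; this is routine but must be handled with care. Everything else is a bookkeeping exercise in manipulating the matrix Riccati equation and invoking continuous dependence on initial data.
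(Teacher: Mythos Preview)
The paper does not give its own proof of this theorem: it is quoted verbatim from \cite{EH90} and used as a black box to derive Theorems~\ref{thm:shapecomparisonA} and~\ref{thm:shapecomparisonB}. So there is no in-paper argument to compare against.

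That said, your outline is essentially the standard route to the result and is viable, modulo two computational slips. First, for your identity $\Psi'=\Phi^{*}(D'+S_1D+DS_1)\Phi$ to hold you need $\Phi'=+S_1\Phi$, not $\Phi'=-S_1\Phi$; with the plus sign the rest of your algebra is correct and yields $\Psi'=\Phi^{*}(R_1-R_2)\Phi-\Phi^{*}D^2\Phi$. Second, the rewriting $\Phi^{*}D^2\Phi=\Psi A\Psi$ holds with $A=(\Phi^{*}\Phi)^{-1}$, not $(\Phi\Phi^{*})^{-1}$. Neither affects the strategy.

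For the borderline case in the first-exit argument, rather than perturbing the initial data and attempting a second-order expansion, it is cleaner to perturb the coefficient: replace $R_2$ by $R_2-\varepsilon I$ (equivalently $Q$ by $Q+\varepsilon\Phi^{*}\Phi>0$). Then $\langle Q(s_*)v,v\rangle>0$ strictly and the contradiction is immediate; continuous dependence on $R_2$ lets you pass $\varepsilon\to 0$. This avoids the delicate zero-derivative situation you flagged. The blow-up argument for $s_1\le s_2$ is fine as sketched: bound $S_1$ above by $S_2$ (just proved) and below by the scalar solution of $b'+b^2+\sup_{[s_0,s_1]}\|R_1\|=0$ via the one-dimensional version of the same comparison, so $S_1$ stays bounded on any compact subinterval of $[s_0,s_2)$.
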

 
\begin{proof} [\bf Proof of Theorem \ref{thm:shapecomparisonA} and Theorem \ref{thm:shapecomparisonB}]
Let $\mathbb M^{n+1}$ be a space form of sectional curvature $K$. Then $R=KI_n$ in \eqref{mriccati}, where $I_n$ is the identity matrix. Moreover, if we assume that $\mathbb M$ is simply connected and   $\Sigma$ is an umbilical hypersurface i.e. $S(0)=-\kappa I_n$, then the Riccati equation \eqref{mriccati} reduces to the one dimensional problem
\begin{equation}\label{1driccati}
    \begin{cases}
      y'+y^2+K=0\\
      y(0)=-\kappa.
    \end{cases}
  \end{equation}
Its solutions describe the shape operator of a family of parallel umbilical hypersurfaces in $\mathbb M$. The idea of the proof is to compare the situation of the manifold $M$ with the situation of an umbilical hypersurface on a space form, where it is possible to do exact computations. Theorems \ref{thm:shapecomparisonA} and \ref{thm:shapecomparisonB} say that we can estimate the situation on $M$ by comparison with umbilical hypersurfaces on a space form, and in the sequel (Lemma \ref{lemma:riccationed}) we will produce exact computations in these particular situations.
	
	\medskip
	For the proof of Theorem \ref{thm:shapecomparisonA}, we apply Theorem \ref{thmriccati} with  $R_1={R}$, where  $R$  is given in \eqref{mriccati}, $R_2=\alpha I$, $S_1(0)=S(0)$ and $S_2(0)=-\kappa_-I$. By hypothesis, we have $R_1\ge R_2$ and $S_1(0)\le S_2(0)$, so that we can apply Theorem \ref{thmriccati}. Moreover, $s_1\le s_2$ implies that the principal curvature of the hypersurfaces level degenerate before $s_2$ so that the rolling radius has to be less than $s_2 =m(\alpha, \kappa_-)$.
	
	\smallskip
	For the proof of Theorem \ref{thm:shapecomparisonB}, we apply Theorem \ref{thmriccati} with $R_1=\beta_+ I$, $R_2=R$, $S_1(0)=-\kappa_+I$ and $S_2(0)=S(0)$. By hypothesis, we have $R_1\ge R_2$ and $S_1(0)\le S_2(0)$, so that we again apply Theorem \ref{thmriccati}. However the results of Theorem \ref{thmriccati} are available only when the solution on $M$ is defined: we have to be less than $s_1=m(\beta_+,\kappa_+)$ but also less than $\roll(M)$.
\end{proof}
%
%
\begin{proof}[{\bf Proof of Theorem \ref{thm:shapecomparisonC}}]
Taking the trace of  the Riccati equation \eqref{mriccati} and using the Schwarz inequality for endomorphisms $n\,\tr S^2\ge (\tr S)^2$ leads to the following Riccati inequality:
$$\frac{\tr(S(s))'}{n}+\left(\frac{\tr(S(s))}{n}\right)^2+\frac{\tr R}{n}\le0.$$
This can be compared with the one dimensional Riccati problem
\begin{equation*} 
    \begin{cases}
      \mu'+\mu^2+\alpha=0\\
      \mu(0)=-\kappa_-
    \end{cases}
  \end{equation*}
  using \cite[Corollary 1.6.2]{Kar89} to get the desired inequality. See \cite[Pages 181-182]{Kar89} and \cite{Esc87} for details.
\end{proof}

%
\subsection{The Riccati equation for umbilic hypersurfaces in spaceforms}
In order to apply Corollary \ref{coro:curvaturebounds} it will be useful to know the solutions of the one-dimensional Riccati equation explicitly. The following Lemma is adapted from \cite{Esc87}.

\begin{lemma}\label{lemma:riccationed}
  Let $K,\kappa\in\R$. Consider the Riccati initial
  value problem (\ref{1driccati}):
\begin{eqnarray} 
    \begin{cases}
      y'+y^2+K=0\\
      y(0)=-\kappa.
    \end{cases}
  \end{eqnarray}
  Let $\lambda=\sqrt{|K|}\geq 0$.
  The solutions of \eqref{1driccati}, together with maximal existence time $m(K,\kappa)$, are as
  follows:

\smallskip

\textbf{a}) In Euclidean spaces $K=0$, the umbilical hypersurfaces are given by spheres and hyperplanes.

  \begin{tabular}{LCL}
  \mbox{\bf Description}&\mbox{\bf Solution }y(s)& m(K,\kappa)\\
  \hline
    \mbox{Contracting spheres } \kappa> 0&\frac{1}{s-\frac{1}{\kappa}}&1/\kappa\\[.2cm]
    \mbox{Expanding spheres }\kappa< 0&\frac{1}{s-\frac{1}{\kappa}}&\infty\\[.2cm]
    \mbox{Parallel hyperplanes } \kappa=0&0&\infty\\[.2cm]
 \end{tabular}

\textbf{b}) In the sphere $\mathbb{S}_{K}$ of constant curvature $K=\lambda^2$ with $\lambda>0$, the umbilical hypersurfaces are geodesic spheres. 

  \begin{tabular}{CLL}
    \multicolumn{2}{c}{$y(s)=-\lambda\tan\left(\lambda   s+\arctan(\frac{\kappa}{\lambda})\right)$}&\mbox{ with }m(K,\kappa)=\frac{1}{\lambda}\left(\frac{\pi}{2}-\arctan(\frac{\kappa}{\lambda})\right){=\frac{1}{\lambda}\arcot(\frac{\kappa}{\lambda})}\\[.2cm]
 \end{tabular}
   
\textbf{c}) In the hyperbolic space $\mathbb{H}_{K}$ of constant curvature $K=-\lambda^2<0$ with $\lambda>0$, the umbilical hypersurfaces are the following.

\begin{tabular}{LLL}
   \mbox{\bf Description}&\mbox{\bf Solution }y(s)& \hspace{-1cm}\mbox{\bf Maximal existence }m(K,\kappa)\\
\hline
\mbox{Hyperbolic subspace } \kappa=0&\lambda\tanh\left(\lambda s\right)&\infty\\[.2cm]
  \mbox{Hypercycle } |\kappa|<\lambda&\lambda\tanh\left(\lambda s-\arctanh(\frac{\kappa}{\lambda})\right)&\infty\\[.2cm]
  \mbox{Expanding spheres }\kappa<-\lambda&\lambda\coth\left(\lambda s-\arcoth(\frac{\kappa}{\lambda})\right)&\infty\\[.2cm]
\mbox{Contracting spheres }\kappa>\lambda&\lambda\coth\left(\lambda s-\arcoth(\frac{\kappa}{\lambda})\right)&\frac{1}{\lambda}\arcoth(\frac{\kappa}{\lambda})\\[.2cm]
  \mbox{Horospheres } |\kappa|=\lambda&\kappa&\infty
  \end{tabular}
\end{lemma}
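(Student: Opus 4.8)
The plan is to solve the scalar Riccati initial value problem $y'+y^2+K=0$, $y(0)=-\kappa$ explicitly in each of the three curvature regimes, and to determine the maximal existence interval by inspecting when the explicit solution blows up. The key observation that makes this tractable is that the Riccati equation $y'=-(y^2+K)$ is separable: $\int \frac{dy}{y^2+K}=-\int ds$. The antiderivative of $1/(y^2+K)$ depends on the sign of $K$ (and, when $K<0$, on whether $|y|$ is larger or smaller than $\lambda=\sqrt{|K|}$), which is exactly what produces the case distinctions in the statement — hyperplanes/hypercycles/horospheres/spheres.

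First I would treat the Euclidean case $K=0$: here $y'=-y^2$, so either $y\equiv 0$ (when $\kappa=0$, giving parallel hyperplanes, existence time $\infty$) or $y(s)=1/(s-1/\kappa)$ by direct integration of $-y'/y^2=1$; this blows up at $s=1/\kappa$, which is positive iff $\kappa>0$, giving maximal existence time $1/\kappa$ for contracting spheres and $\infty$ for expanding spheres ($\kappa<0$). Second, the spherical case $K=\lambda^2>0$: substituting the ansatz $y=-\lambda\tan(\lambda s+c)$ one checks $y'=-\lambda^2\sec^2(\lambda s+c)=-\lambda^2(1+\tan^2(\lambda s+c))=-(y^2+\lambda^2)$, and the initial condition $y(0)=-\lambda\tan c=-\kappa$ forces $c=\arctan(\kappa/\lambda)$; the first blow-up occurs when $\lambda s+c=\pi/2$, i.e. $s=\frac1\lambda(\frac\pi2-\arctan(\kappa/\lambda))=\frac1\lambda\arcot(\kappa/\lambda)$. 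Third, the hyperbolic case $K=-\lambda^2<0$: here one tries $y=\lambda\tanh(\lambda s+c)$ and $y=\lambda\coth(\lambda s+c)$, using $(\tanh)'=1-\tanh^2$ and $(\coth)'=1-\coth^2$ to verify both satisfy $y'=\lambda^2-y^2=-(y^2+K)$; also the constant $y\equiv\pm\lambda$ is a (singular) solution corresponding to horospheres. Matching the initial value: if $|\kappa|<\lambda$ use $\tanh$ with $c=-\arctanh(\kappa/\lambda)$ (no blow-up, since $\tanh$ stays bounded and the argument never reaches a pole); if $|\kappa|=\lambda$ the solution is the constant $\kappa$; if $|\kappa|>\lambda$ use $\coth$ with $c=-\arcoth(\kappa/\lambda)$, and a blow-up happens only where the argument $\lambda s+c$ hits $0$, which for $s>0$ occurs iff $c<0$, i.e. iff $\arcoth(\kappa/\lambda)>0$, i.e. iff $\kappa>\lambda$; in that subcase the existence time is $\frac1\lambda\arcoth(\kappa/\lambda)$, while for $\kappa<-\lambda$ it is $\infty$.

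The verification that each proposed formula solves the ODE is a routine differentiation using the standard identities $\sec^2=1+\tan^2$, $1-\tanh^2=\operatorname{sech}^2$, and $\coth^2-1=\operatorname{csch}^2$, together with the derivative formulas for $\tan,\tanh,\coth$; uniqueness of the solution to the IVP (the right-hand side $-(y^2+K)$ is locally Lipschitz in $y$) then guarantees these are \emph{the} solutions. The only genuinely delicate point is the bookkeeping of maximal existence times: one must argue that the explicit formula is valid on the entire interval up to its first singularity and that the singularity is a genuine obstruction to extension (i.e. $y\to\pm\infty$ there, so the solution cannot be continued), and conversely that when no singularity occurs for $s>0$ the solution extends to all of $\R_+$. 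This is immediate from the explicit forms, since each formula is smooth away from the poles of $\tan$ or $\coth$, and at such a pole $|y|\to\infty$. The geometric descriptions (contracting/expanding spheres, hypercycles, horospheres, hyperplanes, hyperbolic subspaces) are then read off by recalling that $-y(s)$ is the principal curvature of the parallel umbilic hypersurface at distance $s$, so I would simply match the sign and monotonicity behaviour of $-y$ with the classical classification of umbilic hypersurfaces in space forms; this requires no computation beyond identifying, for instance, that a positive decreasing $-y$ blowing up in finite time corresponds to a shrinking geodesic sphere collapsing to its center. I expect the main obstacle, such as it is, to be purely organizational: getting every case's constant of integration and blow-up time exactly right, especially keeping straight the four hyperbolic subcases determined by the position of $\kappa$ relative to $\pm\lambda$.
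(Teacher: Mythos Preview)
Your proposal is correct and follows essentially the same route as the paper: separate variables in $y'=-(y^2+K)$, integrate case by case according to the sign of $K$ (and, for $K<0$, whether $|\kappa|\lessgtr\lambda$), then read off the blow-up times from the explicit formulas. The only cosmetic difference is that the paper computes the antiderivative directly (obtaining $\arctan$, $\arctanh$, $\arcoth$ as the case may be) whereas you phrase it as ansatz-plus-verification together with the Picard--Lindel\"of uniqueness; these are equivalent and equally standard.
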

The proof of Lemma
\ref{lemma:riccationed} is presented in an appendix.
Let us conclude this paragraph with a technical lemma which will allow the control near the cut locus, in the situation where the principal curvatures of the level hypersurfaces could degenerate.
\begin{lemma}\label{lemma:rhonsmallerone}
  Let $K\in\R_{\ge 0}$ and $\kappa\in\R$. Let $y:I\subset\R\to \R$ be a solution of the Riccati equation

\begin{equation}
  y'+y^2+K=0, \quad y(0)=-\kappa,
 \end{equation}
    with maximal interval
  $I$. Then  $-(h-s)y(s)\le1$ for any $0\le s<h\in I$.
\end{lemma}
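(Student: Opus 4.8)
The plan is to treat Lemma~\ref{lemma:rhonsmallerone} as an elementary fact about the scalar Riccati equation, comparing $y$ with the model equation $w'=w^{2}$ whose positive solutions blow up in finite time; the hypothesis $K\ge 0$ is exactly what makes this comparison effective. First I would fix $s$ with $0\le s<h$ and $h\in I$ and note that, since the maximal interval $I$ is connected and contains both $0$ and $h$, it contains the whole segment $[s,h]$, so $y$ is smooth on $[s,h]$ and $h-s>0$. If $y(s)\ge 0$ there is nothing to prove, because then $-(h-s)y(s)\le 0\le 1$. So I may assume $y(s)<0$ and set $g:=-y$ on $[s,h]$, so that $g(s)>0$.

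The core of the argument consists of two observations, both using $K\ge 0$. First, $g'=-y'=y^{2}+K\ge 0$, so $g$ is nondecreasing on $[s,h]$; hence $g(\tau)\ge g(s)>0$ for all $\tau\in[s,h]$, and $1/g$ is well defined and smooth there. Second,
\[
\Bigl(\tfrac{1}{g}\Bigr)'=-\frac{g'}{g^{2}}=-\frac{g^{2}+K}{g^{2}}\le -1\qquad\text{on }[s,h].
\]
Integrating this inequality from $s$ to $h$ gives $\tfrac{1}{g(h)}-\tfrac{1}{g(s)}\le-(h-s)$, and since $g(h)>0$ the left-hand side is positive, so $\tfrac{1}{g(s)}>h-s$, i.e. $(h-s)\,g(s)<1$. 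This is precisely $-(h-s)\,y(s)<1$, and together with the trivial case it proves the lemma (in fact with strict inequality whenever $y(s)<0$).

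I do not anticipate a real obstacle here. The only points deserving a moment's care are that $g$ does not vanish on $[s,h]$ --- immediate from $g'\ge 0$ and $g(s)>0$, which is what legitimises dividing by $g$ and $g^{2}$ --- and that $[s,h]\subset I$, which is just connectedness of the maximal interval of existence. An equivalent route would be to compare $g$ with the solution $w$ of $w'=w^{2}$, $w(s)=g(s)$, which blows up at time $s+1/g(s)$, and invoke the Riccati comparison Theorem~\ref{thmriccati} to force $g$ (hence $y$) to cease to exist by that time, contradicting $h\in I$ unless $s+1/g(s)>h$; but the one-line integration above is shorter. Finally, the sign hypothesis $K\ge 0$ is used in both observations and is genuinely necessary, since the estimate can fail when $K<0$ (for example for geodesic spheres in hyperbolic space whose boundary principal curvature is close to $\sqrt{|K|}$).
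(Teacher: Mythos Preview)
Your proof is correct and takes a genuinely different route from the paper's. The paper argues by case analysis, plugging in the explicit solutions from Lemma~\ref{lemma:riccationed}: for $K=\lambda^{2}>0$ it reduces to the elementary inequality $x\cot x\le 1$ on $[0,\pi)$, and for $K=0$ it is a direct check on the rational function $y(s)=\tfrac{1}{s-1/\kappa}$. Your argument bypasses explicit formulas entirely: when $K\ge 0$ and $y(s)<0$, the function $1/(-y)$ decreases with derivative at most $-1$, so positivity of $1/(-y)$ cannot persist past time $s-1/y(s)$, forcing $h-s<-1/y(s)$. This is cleaner, handles both signs of $K\ge 0$ at once, and makes transparent exactly where the hypothesis $K\ge 0$ enters (in both $g'\ge 0$ and $(1/g)'\le -1$). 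The paper's approach, on the other hand, comes for free once the explicit formulas are tabulated for other purposes, and yields the sharp case of equality.

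One small slip of the pen: after integrating you write ``since $g(h)>0$ the left-hand side is positive'', but the left-hand side $\tfrac{1}{g(h)}-\tfrac{1}{g(s)}$ is in fact negative. What you mean, and what makes the step work, is that $\tfrac{1}{g(h)}>0$, so rearranging gives $\tfrac{1}{g(s)}\ge \tfrac{1}{g(h)}+(h-s)>h-s$. The conclusion is unaffected.
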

The proof of Lemma \ref{lemma:rhonsmallerone} will be presented in the appendix.

\section{Pohozaev identity and its application}
\label{section:pohozaev}

Let $u\in C^\infty(M)$ be an harmonic function. The main goal of this section is to obtain a quantitative comparison inequality relating the norms $\|\grad_\Sigma u\|_{L^2(\Sigma)}$ and $\|\frac{\partial u}{\partial\n}\|_{L^2(\Sigma)}$. Here $\grad_\Sigma u$ denotes the tangential gradient of a function $u\in H^1(\Sigma)$ which is the gradient of $u$ on $\Sigma$.  To achieve this goal, we need the generalized Pohozaev identity  for harmonic functions on $M$. 

\begin{lemma}[Generalized Pohozaev identity]\label{poho}
Let $F:M\rightarrow TM$ be a Lipschitz vector field. Let $u\in C^\infty(M)$ with $\Delta u=0$ in $M$. Then
\begin{multline}\label{pohozaev}
0=\int_{\Sigma}\frac{\partial u}{\partial \n}\langle F,\grad u\rangle\, dV_\Sigma-\frac{1}{2}\int_{\Sigma}|\grad u|^2\la F,\n \ra\, dV_\Sigma\\
+\frac{1}{2}\int_{M}|\grad u|^2 {\rm div}F\, dV_M-\int_{M}\la \nabla_{\grad u}F,\grad u\ra\, dV_M,
\end{multline}
where $\nabla F$ denotes the covariant derivative  of $F$, $\n$ is the unit outward normal vector field along $\Sigma$, and $dV_M$ and $dV_\Sigma$ are Riemannian volume elements of $M$ and $\Sigma$ respectively.
\end{lemma}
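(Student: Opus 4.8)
The plan is to start from the standard integration-by-parts (divergence theorem) applied to a cleverly chosen vector field on $M$, exploiting harmonicity $\Delta u = 0$ at the right moment. The natural candidate is the vector field $W := \langle F, \grad u\rangle\,\grad u - \tfrac{1}{2}|\grad u|^2 F$, whose divergence, once expanded, should reproduce the four terms in \eqref{pohozaev}. So the first step is to compute $\divergence W$ pointwise on $M$ using the Leibniz rules $\divergence(\phi X) = \phi\,\divergence X + \langle \grad\phi, X\rangle$ for a function $\phi$ and vector field $X$.

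Carrying this out, $\divergence\bigl(\langle F,\grad u\rangle\grad u\bigr) = \langle F,\grad u\rangle\,\Delta u + \langle \grad\langle F,\grad u\rangle,\grad u\rangle$, and the first term vanishes since $u$ is harmonic. The remaining term expands, via the metric compatibility of $\nabla$, as $\langle \nabla_{\grad u}F,\grad u\rangle + \langle \nabla_{\grad u}\grad u,\grad u\rangle = \langle \nabla_{\grad u}F,\grad u\rangle + \tfrac12\langle\grad u,\grad|\grad u|^2\rangle$ — here I use that $\nabla^2 u$ is symmetric so $\langle\nabla_{\grad u}\grad u, X\rangle = \tfrac12 X(|\grad u|^2)$. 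For the second piece, $\divergence\bigl(\tfrac12|\grad u|^2 F\bigr) = \tfrac12|\grad u|^2\,\divergence F + \tfrac12\langle\grad|\grad u|^2, F\rangle$. Subtracting, the two $\langle\grad|\grad u|^2,\cdot\rangle$ terms cancel exactly, leaving
$$\divergence W = \langle\nabla_{\grad u}F,\grad u\rangle - \tfrac12|\grad u|^2\,\divergence F.$$
Then the divergence theorem $\int_M \divergence W\, dV_M = \int_\Sigma \langle W,\n\rangle\, dV_\Sigma$, together with $\langle W,\n\rangle = \langle F,\grad u\rangle\langle\grad u,\n\rangle - \tfrac12|\grad u|^2\langle F,\n\rangle$ and the boundary identity $\langle\grad u,\n\rangle = \frac{\partial u}{\partial\n}$, rearranges immediately into \eqref{pohozaev}.

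The only genuine subtlety is regularity: $F$ is only assumed Lipschitz, so $\nabla F$ exists merely almost everywhere (in $L^\infty$) and the divergence theorem must be justified in that setting. The clean way is to note $W$ is Lipschitz on the compact manifold $M$ (since $u\in C^\infty$ and $F$ is Lipschitz), hence $W \in W^{1,\infty}(M) \subset W^{1,1}(M)$, and the divergence theorem holds for $W^{1,1}$ vector fields on a compact manifold with smooth boundary; the pointwise computation of $\divergence W$ above is then valid a.e., which suffices since all integrands are in $L^1$. I expect this regularity bookkeeping — rather than any part of the algebra — to be the step one must be careful about; the computation itself is a short and routine Leibniz-rule exercise using only harmonicity and the symmetry of the Hessian.
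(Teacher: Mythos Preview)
Your approach is essentially identical to the paper's: define the vector field $W=\langle F,\grad u\rangle\grad u-\tfrac12|\grad u|^2F$, compute its divergence pointwise using the Leibniz rule and harmonicity, and apply the divergence theorem. The paper expresses the cross term as $\nabla^2u(F,\grad u)$ rather than via $\grad|\grad u|^2$, but this is purely cosmetic.

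One slip to fix: in your expansion of $\langle\grad\langle F,\grad u\rangle,\grad u\rangle$, metric compatibility gives $\langle\nabla_{\grad u}F,\grad u\rangle+\langle F,\nabla_{\grad u}\grad u\rangle$, not $\langle\nabla_{\grad u}F,\grad u\rangle+\langle\nabla_{\grad u}\grad u,\grad u\rangle$; correspondingly the Hessian-symmetry step should read $\langle F,\nabla_{\grad u}\grad u\rangle=\tfrac12\langle F,\grad|\grad u|^2\rangle$ (i.e.\ your parenthetical identity applied with $X=F$), which is exactly what is needed to cancel against $\tfrac12\langle\grad|\grad u|^2,F\rangle$ from the second piece. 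As written, your two ``$\langle\grad|\grad u|^2,\cdot\rangle$'' terms have different second arguments ($\grad u$ versus $F$) and would not cancel. This is evidently a transcription error rather than a conceptual one, since your justification and the claimed cancellation both point to the correct identity. Your treatment of the Lipschitz regularity of $F$ via $W\in W^{1,1}$ is more explicit than the paper's, which simply works in normal coordinates.
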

The proof of Lemma \ref{poho} was provided first in \cite[Lemma 3.1]{PS16} for Euclidean domains. For simply-connected domains in a complete Riemannian manifold a proof was provided in \cite[Lemma 9]{Xi}. See also \cite[Section 3]{HS}, where a more general version of \eqref{pohozaev}, so-called the Rellich identity is discussed. For the sake of completeness, we also include a proof here, which works for each compact manifold with boundary.
\begin{proof}[Proof of Lemma \ref{poho}]
Working in normal coordinates at a point $p\in M$, we can proceed exactly as in $\R^n$ since only the value of the metric and of its first order derivative at $p$ are involved in the following local computation.
It follows from $\Delta u=0$ that
$$\divergence \left(\la F,\nabla u\ra\nabla u\right)=\la\nabla \la F,\nabla u\ra,\nabla u\ra=
\la\nabla_{\nabla u}F,\nabla u\ra+\nabla^2 u(F,\nabla u).$$
Moreover,
$$\divergence |\nabla u|^2F=2\nabla^2u(F,\nabla u)+|\nabla u|^2\divergence F.$$
Therefore
$$\divergence \left(\la F,\nabla u\ra\nabla u-\frac{1}{2}|\nabla u|^2F\right)=
\la\nabla_{\nabla u}F,\nabla u\ra-\frac{1}{2}|\nabla u|^2\divergence F.$$
Integrating this identity on $M$ and using the divergence theorem completes the proof.
\end{proof}

\subsection{Analytic estimate using distance function}
Recall that $f:M\rightarrow\R$ is defined by $f(x)={\rm dist}(x,\Sigma$).
Given $h\in(0,\roll(M))$ let $\tilde f:M_h\rightarrow\R$ be the distance function to $\Sigma_h$.
That is
 $$
\tilde f(x):=h-f(x)=
{\rm dist}(x,\Sigma_h).
$$
\begin{lemma}\label{lemma:analyticestimatedistance}
 Let $u\in C^\infty(M)$ be such that $\Delta u=0$ and let
  $\eta=\frac{1}{2}\tilde{f}^2$. Then
  \begin{gather}\label{mformul}
\int_{\Sigma}|\nabla_\Sigma u|^2 -\left(\frac{\partial u}{\partial\n}\right)^2 dV_\Sigma=
\frac{1}{h}\int_{M_h}\left(\Delta\eta |\grad u|^2  -2\nabla^2\eta(\grad u,\grad u)\right) dV_M.
\end{gather}
\end{lemma}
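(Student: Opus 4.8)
The plan is to apply the generalized Pohozaev identity \eqref{pohozaev} of Lemma \ref{poho} with a carefully chosen vector field supported near the boundary. Since $\eta = \frac12 \tilde f^2$ and $\tilde f(x) = h - f(x)$ is the distance to $\Sigma_h$, the gradient $\nabla \eta = \tilde f \nabla \tilde f = -\tilde f\, \nabla f$ is a Lipschitz vector field on $M_h$ which vanishes on $\Sigma_h$ (since $\tilde f = 0$ there) and equals $-h\,\nabla f = h\,\n$ on $\Sigma$ (since $\tilde f = h$ and $\nabla f = -\n$ on $\Sigma$). This last property is exactly what makes the boundary terms on $\Sigma$ collapse to the left-hand side of \eqref{mformul}: with $F = \nabla\eta$ we get $\langle F, \n\rangle = h$ and $\langle F, \nabla u\rangle = h\langle \n, \nabla u\rangle = h\frac{\partial u}{\partial\n}$ on $\Sigma$. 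I would first extend $F$ to all of $M$ by zero outside $M_h$; the extension is Lipschitz precisely because $\nabla\eta$ vanishes on $\Sigma_h$, so no spurious interface term appears.

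With this choice, I would carry out the following steps. First, substitute $F = \nabla\eta$ into \eqref{pohozaev}: the integrand $\langle \nabla_{\nabla u} F, \nabla u\rangle$ becomes $\langle \nabla_{\nabla u}\nabla\eta, \nabla u\rangle = \nabla^2\eta(\nabla u, \nabla u)$, and $\operatorname{div} F = \operatorname{div}\nabla\eta = \Delta\eta$. Second, evaluate the two boundary integrals over $\Sigma$ using the values of $F$ on $\Sigma$ computed above:
\begin{align*}
\int_\Sigma \frac{\partial u}{\partial\n}\langle F, \nabla u\rangle\, dV_\Sigma &= h\int_\Sigma \left(\frac{\partial u}{\partial\n}\right)^2 dV_\Sigma,\\
\frac12\int_\Sigma |\nabla u|^2 \langle F, \n\rangle\, dV_\Sigma &= \frac{h}{2}\int_\Sigma |\nabla u|^2\, dV_\Sigma.
\end{align*}
On $\Sigma$ one has the orthogonal decomposition $|\nabla u|^2 = |\nabla_\Sigma u|^2 + \left(\frac{\partial u}{\partial\n}\right)^2$, so the difference of these two boundary terms is $\frac{h}{2}\int_\Sigma \left(\frac{\partial u}{\partial\n}\right)^2 - |\nabla_\Sigma u|^2\, dV_\Sigma$. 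Third, since $F\equiv 0$ on $\Sigma_h$ and on $M\setminus M_h$, the two interior integrals in \eqref{pohozaev} reduce to integrals over $M_h$ only, giving $\frac12\int_{M_h}|\nabla u|^2\Delta\eta\, dV_M - \int_{M_h}\nabla^2\eta(\nabla u,\nabla u)\, dV_M$. Assembling \eqref{pohozaev}, moving the boundary term to one side and dividing by $h/2$ then yields \eqref{mformul}.

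The main point requiring care—and the step I would expect to be the real obstacle—is the regularity and the vanishing of the cutoff: one must check that $\tilde f$ is smooth (hence $\eta$ is $C^\infty$) on $M_h$ for $h < \roll(M)$, which holds because within the rolling radius the distance function to $\Sigma$ has no cut points, and that the trivial extension of $F = \nabla\eta$ across $\Sigma_h$ remains Lipschitz so that Lemma \ref{poho} genuinely applies (Lemma \ref{poho} only requires $F$ Lipschitz, not $C^1$, which is exactly why this works). One should also double-check the sign bookkeeping coming from $\nabla f = -\n$ on $\Sigma$ versus $\nabla f = \nu$ being the inward normal on the parallel hypersurfaces, and confirm that $\langle F,\n\rangle = h > 0$ rather than $-h$; getting these signs right is what produces the clean identity \eqref{mformul} rather than one with a stray minus sign. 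Everything else is a routine substitution into the Pohozaev identity together with the Pythagorean splitting of $|\nabla u|^2$ on the boundary.
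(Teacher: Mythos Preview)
Your proposal is correct and follows essentially the same approach as the paper: define $F=\nabla\eta$ on $M_h$, extend by zero (Lipschitz since $\nabla\eta=\tilde f\nabla\tilde f$ vanishes on $\Sigma_h$), apply the Pohozaev identity \eqref{pohozaev}, use $\nabla\eta|_\Sigma=h\n$ together with the splitting $|\nabla u|^2=|\nabla_\Sigma u|^2+(\partial u/\partial\n)^2$, and simplify. Your write-up is in fact more explicit about the sign bookkeeping and the Lipschitz extension than the paper's own proof.
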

\begin{proof}
Define the vector field $F:M\rightarrow TM$ by
\begin{equation}\label{field}
F(x):=\begin{cases}
\grad{\eta}(x) & {\rm if\ }x\in M_h,\\
0& {\rm if\ }x\in M\setminus M_h.
\end{cases}
\end{equation}
Because $\grad \eta=\tf\grad \tf$ on $M_h$, the vector field $F$ is
Lipschitz on $M$. Note that for $u\in C^\infty(M)$ and any $p\in\Sigma$,
$$|\grad u(p)|^2=|\grad_\Sigma u(p)|^2+\left(\frac{\partial u}{\partial\n}(p)\right)^2.$$
 \smallskip
\noindent
It follows from the Pohozaev identity \eqref{pohozaev} that
\begin{align*}
  2\int_{\Sigma}\frac{\partial u}{\partial\n}\langle \grad \eta,\grad
  u\rangle\, dV_\Sigma&-\int_{\Sigma}|\grad_\Sigma u|^2 \la \grad
  \eta,\n \ra\, dV_\Sigma\\
  -\int_{\Sigma} &\left(\frac{\partial u}{\partial\n}\right)^2\la \grad \eta,\n \ra\, dV_\Sigma+
  \int_{M}|\grad u|^2 \Delta \eta\, dV_M\\
  &-2\int_{M}\la \nabla_{\grad u}\grad \eta,\grad u\ra\, dV_M=0.
\end{align*}
Since $\grad\eta|_{\Sigma}=h\n$, we get
\begin{align*}
2h\int_{\Sigma}\left(\frac{\partial u}{\partial\n} \right)^2 dV_\Sigma&- h\int_{\Sigma}|\grad_\Sigma u|^2   dV_\Sigma
-h\int_{\Sigma} \left(\frac{\partial u}{\partial\n}\right)^2  dV_\Sigma\\
&+
  \int_{M}|\grad u|^2 \Delta \eta\,dV_M-2\int_{M}\la \nabla_{\grad u}\grad \eta,\grad u\ra\,dV_M=0.
\end{align*}
\end{proof}

\subsection{Geometric control on a tubular neighbourhood}
Let $M$ be a smooth compact Riemannian manifold  with nonempty boundary $\Sigma$ and rolling radius $\hb=\roll(M)$. 
Suppose the sectional curvature satisfies $\alpha\leq K\leq\beta$ on $M_{\hb}$ and suppose the principal curvatures of $\Sigma$ satisfy $\kappa_-\leq\kappa_i\leq\kappa_+$.

The link between the function $\eta=\frac{1}{2}\tf^2$ and the local geometry of parallel
hypersurfaces is established in the following lemma.
\begin{lemma}\label{lemma:eigenvaluehessianeta}
Let $h\in (0,\htd)$ and let $\delta\in (0,h)$.
  Then the eigenvalues of the Hessian $\nabla^2\eta$ at $y\in\Sigma_{\delta}$ are 
  \begin{gather*}
    \rho_1=(h-\delta)\kappa_1(y)\leq\ldots\leq\rho_n=(h-\delta)\kappa_n(y)\leq\rho_{n+1}=1.
  \end{gather*}
\end{lemma}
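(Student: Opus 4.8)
The function $\eta = \frac12\tf^2$ where $\tf(x) = h - f(x)$ is the distance to the hypersurface $\Sigma_h$, so on $M_h$ we have $\grad\eta = \tf\grad\tf$ and the strategy is to compute $\nabla^2\eta$ directly via the product rule. Since $\grad\tf = -\grad f = \nu$ is the unit gradient of a distance function, $|\grad\tf| = 1$, its integral curves are geodesics, and $\nabla_{\grad\tf}\grad\tf = 0$. Writing out $\nabla^2\eta(X,Y) = X(Y\eta) - (\nabla_X Y)\eta$, or more cleanly $\nabla^2\eta = d\tf\otimes d\tf + \tf\,\nabla^2\tf$, I would split $T_y M$ at a point $y\in\Sigma_\delta$ into the line $\R\grad\tf$ and its orthogonal complement $T_y\Sigma_\delta$. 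On $\grad\tf$: since $\nabla^2\tf(\grad\tf,\cdot) = 0$ (geodesic integral curves) and $d\tf(\grad\tf) = 1$, we get $\nabla^2\eta(\grad\tf,\grad\tf) = 1$, giving the eigenvalue $\rho_{n+1} = 1$ with eigenvector $\grad\tf$.

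**The tangential directions.** On $T_y\Sigma_\delta$, the term $d\tf\otimes d\tf$ vanishes (these vectors are $\grad\tf$-orthogonal), so $\nabla^2\eta|_{T_y\Sigma_\delta} = \tf(y)\,\nabla^2\tf|_{T_y\Sigma_\delta}$. Now $\nabla^2\tf = -\nabla^2 f$ since $\tf = h - f$, and by the discussion in Section~\ref{section:prelim} the restriction of $\nabla^2 f$ to $\Sigma_\delta$ is the second fundamental form with respect to the normal $\n = -\grad f$, with eigenvalues $-\kappa_1(y),\ldots,-\kappa_n(y)$. Hence the eigenvalues of $\nabla^2\tf|_{T_y\Sigma_\delta}$ are $\kappa_1(y),\ldots,\kappa_n(y)$, and multiplying by $\tf(y) = h - \delta$ gives $\rho_i = (h-\delta)\kappa_i(y)$ for $i = 1,\ldots,n$. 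Since $\grad\tf$ is orthogonal to $T_y\Sigma_\delta$ and $\nabla^2\eta(\grad\tf, T_y\Sigma_\delta) = 0$ (because both $d\tf\otimes d\tf$ and $\nabla^2\tf$ kill mixed terms with the geodesic direction), the splitting is orthogonal and the listed $\rho_i$ are precisely all $n+1$ eigenvalues.

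**The ordering.** It remains to justify the claimed inequalities $\rho_1 \le \cdots \le \rho_n \le \rho_{n+1} = 1$. The ordering among $\rho_1,\ldots,\rho_n$ is just the convention of listing $\kappa_1(y) \le \cdots \le \kappa_n(y)$ (note $h - \delta > 0$). The only substantive point is $\rho_n = (h-\delta)\kappa_n(y) \le 1$, i.e.\ $(h-\delta)\kappa_i(y) \le 1$ for all $i$. This is exactly where the hypothesis $h < \htd$ and $\delta < h$ enters: by Theorem~\ref{thm:shapecomparisonB} (or Corollary~\ref{coro:curvaturebounds}), for $\delta \in (0,\htd)$ each principal curvature satisfies $\kappa_i(\delta) \le -b(\delta)$ where $b$ solves the model Riccati equation with nonnegative curvature $\beta_+$, and then Lemma~\ref{lemma:rhonsmallerone} applied to $y = -b$ gives $-(h-\delta)b(\delta) \le 1$ for $0 \le \delta < h$. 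Chaining these, $(h-\delta)\kappa_i(\delta) \le -(h-\delta)b(\delta) \le 1$, which is the desired bound.

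**Main obstacle.** The computation of the eigenvalues is essentially bookkeeping with the product rule and the sign conventions for the shape operator; the genuine content is the inequality $\rho_n \le 1$, and the only care needed there is to invoke Theorem~\ref{thm:shapecomparisonB} with the \emph{nonnegative} comparison curvature $\beta_+$ (so that Lemma~\ref{lemma:rhonsmallerone}, which requires $K \ge 0$, applies) and to check that $h < \htd \le m(\beta_+,\kappa_+)$ puts us in the range where $b(\delta)$ is defined on $[0,h)$ as needed by the lemma.
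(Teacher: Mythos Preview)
Your proof is correct and follows essentially the same route as the paper: compute $\nabla^2\eta = d\tf\otimes d\tf + \tf\,\nabla^2\tf$, read off the eigenvalues from the orthogonal splitting $T_yM = \R\grad\tf \oplus T_y\Sigma_\delta$, and then invoke Corollary~\ref{coro:curvaturebounds} together with Lemma~\ref{lemma:rhonsmallerone} (with nonnegative comparison curvature $\beta_+$) to obtain $\rho_n\le 1$. One small slip: in the last step you should apply Lemma~\ref{lemma:rhonsmallerone} to $y=b$, not to $y=-b$ (the function $b$ itself solves the Riccati equation, and the lemma's conclusion $-(h-\delta)b(\delta)\le 1$ is exactly what you want); your stated conclusion is nonetheless correct.
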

\begin{proof}
The Hessian of $\eta$ is 
 \[\nabla^2\eta=\nabla \tf\otimes\nabla \tf+\tf\nabla^2\tf.\]
 The eigenvalues of $\nabla^2\tf(y)$ are $0,
 \kappa_1(y),\ldots,\kappa_n(y)$.
 Moreover, for each $V,W\in T_{y}\Sigma_{\delta}$,
 $$\nabla \tf\otimes\nabla \tf(V,W)=0.$$ 
It follows from
$$\Sigma_{\delta}=\tf^{-1}(h-\delta)$$
that for each $i=1,\cdots,n$ the number $\rho_i:=(h-\delta)\kappa_i(y)$ is an eigenvalue of $\nabla^2f(y)$.
Finally, it follows from Corollary \ref{coro:curvaturebounds} that $(h-\delta)\kappa_i(y)\leq -(h-\delta)b(\delta)$, where $b(\delta)$ is defined in \eqref{eq:defbeta+} using $\beta_+\geq 0$. The proof is now completed using Lemma \ref{lemma:rhonsmallerone}.
  \end{proof}

\begin{lemma}\label{lemma:firstgeomcontrol}
  For each smooth function $v\in C^\infty(M)$, the following pointwise
  estimate holds on $M_\hb$,
  \begin{gather}\label{ineq:locquadratic}
  -(1-\sum_{i=1}^n \rho_i)|\grad v|^2 \le \Delta{\eta}|\grad v|^2-2\nabla^2{\eta}(\grad v,\grad v)\leq (1+\sum_{i=2}^{n}\rho_i-\rho_1)|\grad v|^2.
  \end{gather}
\end{lemma}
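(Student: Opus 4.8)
The plan is to reduce the pointwise inequality \eqref{ineq:locquadratic} to an elementary bound on a quadratic form, using the explicit spectrum of $\nabla^2\eta$ supplied by Lemma \ref{lemma:eigenvaluehessianeta}. First I would fix a point $p\in M_\hb$ and set $\delta=f(p)$, so that $p\in\Sigma_\delta$. By Lemma \ref{lemma:eigenvaluehessianeta} the Hessian $\nabla^2\eta$ at $p$ has eigenvalues
\[
\rho_1\le\rho_2\le\cdots\le\rho_n\le\rho_{n+1}=1,
\]
where $\rho_i=(h-\delta)\kappa_i(p)$ for $1\le i\le n$ (the value $\rho_{n+1}=1$ corresponds to the normal direction $\grad\tf$, and the bound $\rho_n\le1$ comes, via Corollary \ref{coro:curvaturebounds} and Lemma \ref{lemma:rhonsmallerone}, from the curvature hypotheses). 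I would then pick an orthonormal basis $e_1,\dots,e_{n+1}$ of $T_pM$ consisting of eigenvectors of $\nabla^2\eta$, with $\nabla^2\eta(e_j,e_j)=\rho_j$.

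Next I would expand $\grad v(p)=\sum_{j=1}^{n+1}a_j e_j$. Then $|\grad v(p)|^2=\sum_{j}a_j^2$, while $\Delta\eta(p)=\tr\nabla^2\eta=1+\sum_{i=1}^{n}\rho_i$ and $\nabla^2\eta(\grad v,\grad v)(p)=\sum_{j}\rho_j a_j^2$. Substituting these into the middle term of \eqref{ineq:locquadratic} gives, at $p$,
\[
\Delta\eta\,|\grad v|^2-2\nabla^2\eta(\grad v,\grad v)=\sum_{j=1}^{n+1}c_j\,a_j^2,\qquad c_j:=1+\sum_{i=1}^{n}\rho_i-2\rho_j.
\]
Since the weights $a_j^2$ are nonnegative and sum to $|\grad v(p)|^2$, it then suffices to bound each scalar $c_j$ from below and from above, uniformly in $j\in\{1,\dots,n+1\}$.

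The two required scalar estimates are immediate from the ordering of the $\rho_i$: using $\rho_j\le\rho_{n+1}=1$ gives $c_j\ge 1+\sum_{i=1}^{n}\rho_i-2=-(1-\sum_{i=1}^{n}\rho_i)$, and using $\rho_j\ge\rho_1$ gives $c_j\le 1+\sum_{i=1}^{n}\rho_i-2\rho_1=1+\sum_{i=2}^{n}\rho_i-\rho_1$. Multiplying by $a_j^2\ge0$, summing over $j$, and recalling $\sum_j a_j^2=|\grad v|^2$ yields exactly \eqref{ineq:locquadratic} at $p$; as $p$ was arbitrary, this proves the lemma (note that no harmonicity of $v$ is used). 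Once Lemma \ref{lemma:eigenvaluehessianeta} is in hand the argument is essentially linear algebra, so there is no genuine obstacle; the only point requiring care is the bookkeeping — pairing the normal eigenvalue $\rho_{n+1}=1$ with the lower bound and the smallest eigenvalue $\rho_1$ with the upper bound, and tracking signs so that the estimates come out in precisely the stated form.
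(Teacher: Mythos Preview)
Your proof is correct and follows essentially the same approach as the paper: both diagonalize $\nabla^2\eta$ in an orthonormal eigenbasis, write the middle expression as $|w|^2\tr A - 2\langle Aw,w\rangle$ with $A=\mathrm{diag}(\rho_i)$ and $w=\grad v$, and then bound the quadratic form $\langle Aw,w\rangle$ above by $\rho_{n+1}|w|^2=|w|^2$ and below by $\rho_1|w|^2$. Your coefficient-by-coefficient computation of $c_j=1+\sum_{i=1}^n\rho_i-2\rho_j$ is just a slightly more explicit rendering of the same linear-algebra step.
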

\begin{proof}
  Let $A=\mbox{diag}(\rho_i)$ be the diagonal matrix
  representing the Hessian of $\eta$ at $x$ in an orthonormal frame. Let
  $w=\grad v(x)$. Observe that
  $$\rho_1|w|^2\leq Aw\cdot w\leq\rho_{n+1}|w|^2$$
  It follows that
  $$(\rho_1+\cdots+\rho_n-\rho_{n+1})|w|^2\leq|w|^2\tr(A)-2Aw\cdot w\leq(-\rho_1+\rho_2+\cdots+\rho_{n+1})|w|^2,$$
  with $\rho_{n+1}=1$.
\end{proof}


\begin{lemma}\label{ahestimate}
Let $M\in \mathcal{M}(n,\alpha,\beta,\kappa_+,\kappa_-,\bar h)$.
Then there exist  constants $\cbm=\cbm(n,\alpha,\beta,\kappa_-,\kappa_+)$ and $\scbm=\scbm(n,\alpha,\kappa_-)$ such that every $v\in H^1(M)$ satisfies the following inequality	for each $h\in (0,\htd)$,
  \begin{multline}\label{I}
 -(1+\scbm)\int_{M}|\grad v|^2\,dV_M\le \int_{M_h}|\grad v|^2\Delta{\eta}-2\nabla^2{\eta}(\grad v,\grad v)\,dV_M\\\leq{(1+ \cbm)}\int_{M}|\grad v|^2\,dV_M.
  \end{multline}
  In addition, the left-hand side inequality holds for each $h\in (0,\hb)$.
\end{lemma}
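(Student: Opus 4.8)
The plan is to derive both inequalities in \eqref{I} from the pointwise estimate of Lemma \ref{lemma:firstgeomcontrol}, integrated over $M_h$. Write $\delta=f(x)$ and recall from Lemma \ref{lemma:eigenvaluehessianeta} that the eigenvalues of the Hessian of $\eta$ at a point $x\in M_h$ are $\rho_1(x)\le\cdots\le\rho_n(x)\le\rho_{n+1}=1$, where $\rho_i(x)=(h-\delta)\kappa_i(x)$ for $i=1,\dots,n$ and $\kappa_1(x)\le\cdots\le\kappa_n(x)$ are the principal curvatures of the parallel hypersurface $\Sigma_\delta$ through $x$. By Lemma \ref{lemma:firstgeomcontrol}, almost everywhere on $M_h$,
\begin{equation*}
-\Bigl(1-\sum_{i=1}^n\rho_i\Bigr)|\grad v|^2\ \le\ |\grad v|^2\Delta\eta-2\nabla^2\eta(\grad v,\grad v)\ \le\ \Bigl(1+\sum_{i=2}^n\rho_i-\rho_1\Bigr)|\grad v|^2 .
\end{equation*}
Thus it suffices to produce explicit constants $\cbm\ge 0$ and $\scbm\ge 0$ with $\sum_{i=2}^n\rho_i-\rho_1\le\cbm$ on $M_h$ for $h\in(0,\htd)$, and $-\sum_{i=1}^n\rho_i\le\scbm$ on $M_h$ for $h\in(0,\hb)$: integrating the pointwise inequality over $M_h$ and using $\int_{M_h}|\grad v|^2\,dV_M\le\int_M|\grad v|^2\,dV_M$ then yields \eqref{I}, with the left inequality valid for $h\in(0,\hb)$.

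The upper bounds $\rho_i\le 1$ are part of Lemma \ref{lemma:eigenvaluehessianeta}, so $\sum_{i=2}^n\rho_i\le n-1$ on $M_h$ for $h\in(0,\htd)$; it remains to bound $-\rho_1$ and $-\sum_i\rho_i$ from above, and for this I need only a \emph{lower} bound on the principal curvatures of the parallel hypersurfaces. By Corollary \ref{coro:curvaturebounds}, $\kappa_i(x)\ge-a(\delta)$ for all $\delta\in(0,\hb)$ (for the trace $\sum_i\kappa_i$ it is enough to invoke the weaker mean curvature comparison Theorem \ref{thm:shapecomparisonC}, which is why the left inequality needs only a Ricci and mean curvature bound), where $a$ solves $a'+a^2+\alpha=0$, $a(0)=-\kappa_-$. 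The key input is an explicit ceiling for $a$: since $a$ is strictly decreasing whenever $\alpha\ge 0$, whereas at any interior critical point of $a$ one has $a^2=-\alpha$, a direct analysis of this ODE — equivalently, inspection of the classification in Lemma \ref{lemma:riccationed} — gives $a(\delta)\le\max\{-\kappa_-,\sqrt{|\alpha|}\}\le\sqrt{|\alpha|+\kappa_-^2}$ on its entire interval of existence. Hence, for $x\in M_h$,
\begin{equation*}
-\rho_i(x)=(h-\delta)\bigl(-\kappa_i(x)\bigr)\ \le\ (h-\delta)\,a(\delta)\ \le\ h\,\sqrt{|\alpha|+\kappa_-^2}
\end{equation*}
(the middle term being negative, and the bound trivial, when $a(\delta)<0$).

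Combining these: on $M_h$ with $h<\htd$ one has $\sum_{i=2}^n\rho_i-\rho_1\le(n-1)+\htd\sqrt{|\alpha|+\kappa_-^2}$, and on $M_h$ with $h<\hb$ one has $-\sum_{i=1}^n\rho_i\le n(h-\delta)a(\delta)\le n\,\hb\,\sqrt{|\alpha|+\kappa_-^2}$, so one may take $\cbm=(n-1)+\htd\sqrt{|\alpha|+\kappa_-^2}$ and $\scbm=n\,\hb\,\sqrt{|\alpha|+\kappa_-^2}$; sharper values of $\cbm$, depending also on $\beta$ and $\kappa_+$, follow by retaining the full bound $\rho_i\le-(h-\delta)b(\delta)$ from Corollary \ref{coro:curvaturebounds} instead of $\rho_i\le 1$. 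This proves \eqref{I} on the asserted ranges of $h$. The step demanding the most care is making the ceilings for $a$ (and, for the sharper $\cbm$, for $b$) explicit via the classification of Lemma \ref{lemma:riccationed}; the bound $\rho_i\le 1$ is where Lemma \ref{lemma:rhonsmallerone} enters, controlling the possible degeneration of the principal curvatures of $\Sigma_\delta$ as $\delta$ approaches the cut locus.
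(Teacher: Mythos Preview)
Your proof is correct and follows essentially the same route as the paper: apply the pointwise bound of Lemma~\ref{lemma:firstgeomcontrol}, use $\rho_i\le 1$ (via Lemma~\ref{lemma:rhonsmallerone}) to control $\sum_{i=2}^n\rho_i$, and bound $-\rho_1$ and $-\sum_i\rho_i$ using the principal/mean curvature comparison together with the ceiling $a(\delta)\le\sqrt{|\alpha|+\kappa_-^2}$, arriving at the same constants $\cbm=(n-1)+\htd\sqrt{|\alpha|+\kappa_-^2}$ and $\scbm=n\hb\sqrt{|\alpha|+\kappa_-^2}$. The only methodological difference is how that ceiling on $a$ is obtained: the paper applies a second Riccati comparison, bounding $a$ by the hypercycle solution in the space form of curvature $-(|\alpha|+\kappa_-^2+\epsilon)$ and then letting $\epsilon\to 0$, whereas you argue directly from the ODE (monotone when $\alpha\ge 0$; when $\alpha<0$ the solution is trapped by the equilibrium $\sqrt{|\alpha|}$ or bounded by its initial value). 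Your direct argument is a bit more elementary and avoids the auxiliary $\epsilon$, while the paper's comparison argument has the virtue of reusing the same machinery already set up; both yield the identical bound.
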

\begin{proof}
Let $h\in(0,\htd)$. Given $\delta\in (0,h)$, let $H(y)$ be the mean curvature of the parallel hypersurface $\Sigma_\delta$ at $y\in\Sigma_\delta$. It follows from Lemma  \ref{lemma:eigenvaluehessianeta} and  Lemma \ref{lemma:firstgeomcontrol} that
\begin{multline}
-(1-(h-\delta)nH)|\grad v|^2\le \Delta{\eta}|\grad v|^2-2\nabla^2{\eta}(\grad v,\grad v)
\\\leq
\left(1+(h-\delta)\sum_{i=2}^{n}\kappa_i-(h-\delta)\kappa_1\right)|\grad v|^2.
\end{multline}
Following Corollary \ref{coro:curvaturebounds} and Theorem~\ref{thm:shapecomparisonC},
	let
\begin{gather}\label{constantAh}
\cbm(h)=\max_{0\leq\delta<h}\left\{(h-\delta)\Bigl(a(\delta)-(n-1)b(\delta)\Bigr)\right\},
\end{gather}
and 
\begin{equation}\label{constantBh}
\scbm(h)=n\max_{0\leq\delta<h}(h-\delta)\mu(\delta)\blue{,}
\end{equation}
so that for each $h\in(0,\htd)$ one has
 \begin{multline*}
-(1+\scbm(h))\int_{M}|\grad v|^2\,dV_M\le \int_{M_h}|\grad v|^2\Delta{\eta}-2\nabla^2{\eta}(\grad v,\grad v)\,dV_M\\\leq{(1+ \cbm(h))}\int_{M}|\grad v|^2\,dV_M.
\end{multline*}
Notice that according to Theorem~\ref{thm:shapecomparisonC}, the left-hand side inequality holds for each $h\in(0,\hb)$. One can therefore take
\begin{gather}\label{constant:cbm}
\cbm\ge\max_{0\leq h<\htd}\cbm(h),
\end{gather}
and 
\begin{equation}\label{constant:scbm}
\scbm\ge\max_{0\leq h<\hb}\scbm(h).
\end{equation}
It remains to prove that the right-hand sides of \eqref{constant:cbm} and \eqref{constant:scbm} are finite. To achieve this,
we provide upper bounds for $\bar A(h)$ and $\bar B(h)$.

For each $\delta\in (0,h)$, the number $-a(\delta)$ is the principal curvature of an umbilical hypersurface of principal curvature $\kappa_-$ in the space form of constant curvature $\alpha$. Observe that for each $\epsilon>0$,
$$\alpha\ge-(\overbrace{|\alpha|+\kappa_-^2+\epsilon}^{\hat{\alpha}}).$$
Using Corollary \ref{coro:curvaturebounds} one more time we see that
$a(\delta)$ is bounded above by the solution $y(\delta)$ of
$$y'+y^2-\hat{\alpha}=0,\qquad y(0)=-\kappa_-.$$
Because $|\kappa_-|\le\sqrt{\hat{\alpha}}$, Lemma \ref{lemma:riccationed} indicates that $y(\delta)$ is the principal curvature of an hypercycle in $\mathbb{H}_{\sqrt{\hat{\alpha}}}$ and it follows that for each $\delta\in(0,h)$,
$$a(\delta)\leq\sqrt{\hat{\alpha}}\tanh(\star)\leq\sqrt{|\alpha|+\kappa_-^2}.$$
\alx{Here $\star$ stands for the argument but is not relevant to the estimate.}
This bound does not depend on $0<h<\hb$.
We proceed similarly to bound $-(h-\delta)b(\delta)$ for each $h\in(0,\htd)$. Indeed, the situation is simpler since we have defined $b(\delta)$ using a space of constant curvature $\beta_+\geq 0$. In this case it follows from Lemma \ref{lemma:rhonsmallerone} that $-(h-\delta)b(\delta)\leq 1$.
Substitution in the definition \eqref{constantAh} leads to
$$\cbm(h)\leq (\htd-h)\sqrt{|\alpha|+\kappa_-^2}+(n-1)$$
This implies that \eqref{I} holds for any
\begin{gather}\label{constant:explicitAbar}
\cbm\geq\htd\sqrt{|\alpha|+\kappa_-^2}+(n-1).
\end{gather}
One proceed exactly in the same way to bound $(h-\delta)n\mu(\delta)$ and obtain
$$(h-\delta)n\mu(\delta)\leq(\hb-h)\sqrt{|\alpha|+\kappa_-^2}.$$
This implies that \eqref{I} holds for any
\begin{gather}\label{constant:explicitBbar}
\scbm\geq\hb n\sqrt{|\alpha|+\kappa_-^2}.
\end{gather}
This completes the proof.
\end{proof}
Note that to bound $\bar B(h)$ from above we use Theorem \ref{thm:shapecomparisonC}, where only lower bounds on the Ricci and on the mean curvature are needed. Hence, we can weaken the hypotheses in Lemma \ref{ahestimate}.
\begin{lemma}\label{estimatew/ricci}
Let $M\in \mathcal{M}'(n,\alpha,\kappa_-,\hb)$.
Then there exists  constant  $\scbm=\scbm(n,\alpha,\kappa_-)$ such that every $v\in H^1(M)$ satisfies the following inequality	for each $h\in (0,\hb)$,
  \begin{equation*} -(1+\scbm)\int_{M}|\grad v|^2\,dV_M\le \int_{M_h}|\grad v|^2\Delta{\eta}-2\nabla^2{\eta}(\grad v,\grad v)\,dV_M.
  \end{equation*}
\end{lemma}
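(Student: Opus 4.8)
The plan is to reuse the proof of the \emph{left-hand} inequality in Lemma~\ref{ahestimate}, checking that this half of the argument never uses more than a \emph{lower} bound on the mean curvature of the parallel hypersurfaces $\Sigma_\delta\subset M_h$, so that the mean curvature comparison (Theorem~\ref{thm:shapecomparisonC}) may be substituted for the two-sided principal curvature comparisons of Corollary~\ref{coro:curvaturebounds}. Since the hypotheses of Theorem~\ref{thm:shapecomparisonC} are exactly (H1), (H2$'$) and (H3$'$), this yields the statement for each $M\in\mathcal{M}'$.

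First I would fix $h\in(0,\hb)$ and keep $\tf=h-f$, $\eta=\tfrac12\tf^2$. As in the proof of Lemma~\ref{lemma:eigenvaluehessianeta}, the identity $\nabla^2\eta=\nabla\tf\otimes\nabla\tf+\tf\,\nabla^2\tf$ shows that at $y\in\Sigma_\delta$ ($0\le\delta<h$) the Hessian $\nabla^2\eta$ has the eigenvalue $1$ in the direction $\nabla\tf$ and the eigenvalues $(h-\delta)\kappa_i(y)$ along $T_y\Sigma_\delta$, where $\kappa_i(y)$ are the principal curvatures of $\Sigma_\delta$; this part uses only the foliation of $M_h$ by parallel hypersurfaces and no curvature bound. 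In particular $\Delta\eta=1+(h-\delta)\,nH(y)$ with $H$ the mean curvature of $\Sigma_\delta$, and Lemma~\ref{lemma:firstgeomcontrol} (left inequality) gives the pointwise estimate
\begin{equation*}
\Delta\eta\,|\grad v|^2-2\,\nabla^2\eta(\grad v,\grad v)\ \ge\ \bigl((h-\delta)\,nH(y)-1\bigr)\,|\grad v|^2\qquad\text{on }M_h .
\end{equation*}

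By Theorem~\ref{thm:shapecomparisonC}, under (H1), (H2$'$), (H3$'$) the solution $\mu$ of $\mu'+\mu^2+\alpha=0$, $\mu(0)=-\kappa_-$, exists on $[0,\hb)$ and satisfies $H(y)>-\mu(\delta)$ for every $y\in\Sigma_\delta$, $0<\delta<\hb$. Hence $(h-\delta)nH(y)>-(h-\delta)n\mu(\delta)\ge-\scbm(h)$ with $\scbm(h)$ as in \eqref{constantBh} (taken $\ge 0$), and integrating the displayed bound over $M_h$ and then enlarging the domain to $M$ gives the claimed inequality with constant $\scbm(h)$. To make the constant independent of $h$ one argues exactly as for $\bar B(h)$ in Lemma~\ref{ahestimate}: with $\hat\alpha=|\alpha|+\kappa_-^2+\varepsilon$ one has $\alpha\ge-\hat\alpha$, so Riccati comparison bounds $\mu(\delta)$ above by the solution of $y'+y^2-\hat\alpha=0$, $y(0)=-\kappa_-$, which by Lemma~\ref{lemma:riccationed} is a hypercycle in $\mathbb{H}_{\sqrt{\hat\alpha}}$ (recall $|\kappa_-|\le\sqrt{\hat\alpha}$); letting $\varepsilon\to 0$ gives $\mu(\delta)\le\sqrt{|\alpha|+\kappa_-^2}$, hence $(h-\delta)n\mu(\delta)\le\hb\,n\sqrt{|\alpha|+\kappa_-^2}$ for all $\delta$, and one may take $\scbm=\hb\,n\sqrt{|\alpha|+\kappa_-^2}$.

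The point requiring care — and the reason this is not a formal corollary of Lemma~\ref{ahestimate} — is the use of Lemma~\ref{lemma:firstgeomcontrol}: its left inequality is stated with $\rho_{n+1}=1$ the largest eigenvalue of $\nabla^2\eta$, which in $\mathcal{M}$ is guaranteed by Corollary~\ref{coro:curvaturebounds} but may fail in $\mathcal{M}'$. When some tangential eigenvalue $(h-\delta)\kappa_{\max}(y)$ exceeds $1$ one cannot literally quote Lemma~\ref{lemma:firstgeomcontrol}; one argues directly from
\begin{equation*}
\Delta\eta\,|\grad v|^2-2\,\nabla^2\eta(\grad v,\grad v)\ \ge\ \bigl(\tr\nabla^2\eta-2(h-\delta)\kappa_{\max}(y)\bigr)|\grad v|^2
\end{equation*}
and must check, using only (H1) and Theorem~\ref{thm:shapecomparisonC}, that the right-hand side is still $\ge-(1+\scbm(h))|\grad v|^2$; this reduces to a one-dimensional comparison for the Riccati solution $\mu$ combined with the rolling-radius bound, in the spirit of Lemma~\ref{lemma:rhonsmallerone}. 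Establishing this estimate is the only genuinely new step; everything else is identical to the corresponding part of the proof of Lemma~\ref{ahestimate}, and the two-sided control of the $\kappa_i(\delta)$ from Corollary~\ref{coro:curvaturebounds}, which genuinely needs (H2)–(H3), enters only in the proof of \eqref{ineq:main1}.
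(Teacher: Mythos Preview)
Your overall strategy coincides with what the paper does: the paper simply remarks that ``to bound $\bar B(h)$ from above we use Theorem~\ref{thm:shapecomparisonC}, where only lower bounds on the Ricci and on the mean curvature are needed,'' and declares Lemma~\ref{estimatew/ricci} an immediate consequence of the left half of Lemma~\ref{ahestimate}. So in spirit you are reproducing the paper's argument.

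Where you go further than the paper is in flagging that the left inequality of Lemma~\ref{lemma:firstgeomcontrol} is stated under the ordering $\rho_n\le\rho_{n+1}=1$, which in turn relies on Lemma~\ref{lemma:eigenvaluehessianeta} and hence on the two-sided bounds (H2)--(H3). You are right that this is a genuine issue: without an upper bound on $K$ and on the $\kappa_i$, one cannot invoke Corollary~\ref{coro:curvaturebounds} or Lemma~\ref{lemma:rhonsmallerone}, so the inequality $\nabla^2\eta(w,w)\le|w|^2$ underlying the left half of Lemma~\ref{lemma:firstgeomcontrol} is not justified in the class $\mathcal{M}'$. The paper does not address this, and your observation is sharper than the paper's own treatment.

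However, your proposed resolution does not work. You suggest bounding $\tr\nabla^2\eta-2(h-\delta)\kappa_{\max}(y)$ from below ``using only (H1) and Theorem~\ref{thm:shapecomparisonC}'' via ``a one-dimensional comparison for the Riccati solution~$\mu$.'' But $\mu$ governs only the mean curvature $H=\tfrac{1}{n}\sum\kappa_i$; a lower bound on Ricci and on $H$ places no constraint whatsoever on the largest principal curvature $\kappa_{\max}$. One can have $H\equiv\kappa_-$ while $\kappa_{\max}$ is arbitrarily large (compensated by very negative $\kappa_{\min}$), and nothing ``in the spirit of Lemma~\ref{lemma:rhonsmallerone}'' helps, since that lemma bounds solutions of a scalar Riccati equation, not the individual eigenvalues of the matrix Riccati equation~\eqref{mriccati}. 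So the ``only genuinely new step'' you isolate cannot be completed by the method you sketch, and the gap you identified remains open in your proposal as well.
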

\begin{rem}\label{Bvalue0} When the lower bound for the mean curvature of the parallel hypersurfaces is nonnegative or equivalently when $\mu(\delta)$ is nonpositive for every $0\le \delta\le\hb$, then we can simply take $\scbm=0$.
\end{rem}

\begin{theorem}\label{equi1}
The assumptions are the same as in Lemma \ref{ahestimate}.

\noindent
Let $\constA,\constB\in\R$ be such that 
\begin{gather}\label{constant:CM}
\constA\geq \frac{1}{\htd}(1+ \cbm),\quad \text{and}\quad \constB\geq\frac{1}{2\hb}(1+ \scbm).
\end{gather}
Let $v\in C^\infty(M)$ be such that $\Delta v=0$ in $M$ and normalized such that $\int_{\Sigma}v^2 dV_\Sigma=1$. Then the following holds

\begin{gather}\label{tan}
\int_{\Sigma}|\grad_\Sigma v|^2dV_\Sigma\leq \int_{\Sigma}\left(\frac{\partial v}{\partial\n}\right)^2dV_\Sigma+{\constA}\left(\int_{\Sigma}\left(\frac{\partial v}{\partial\n}\right)^2dV_\Sigma\right)^{\frac{1}{2}}.
\end{gather}

\begin{gather}\label{nor}
\left(\int_{\Sigma}\left(\frac{\partial v}{\partial\n}\right)^2dV_\Sigma\right)^{\frac{1}{2}}\leq{\constB}+\sqrt{\constB+\int_{\Sigma}|\grad_\Sigma v|^2\,dV_\Sigma}.
\end{gather}
\end{theorem}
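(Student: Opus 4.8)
The plan is to feed the harmonic function $v$ into the master identity of Lemma~\ref{lemma:analyticestimatedistance}, and then control the right-hand side using the pointwise/integral bounds of Lemma~\ref{ahestimate}. Recall that Lemma~\ref{lemma:analyticestimatedistance} gives, for any $h\in(0,\roll(M))$,
\begin{gather*}
\int_{\Sigma}|\nabla_\Sigma v|^2 -\Bigl(\tfrac{\partial v}{\partial\n}\Bigr)^2\, dV_\Sigma
=\frac{1}{h}\int_{M_h}\Bigl(\Delta\eta\,|\grad v|^2-2\nabla^2\eta(\grad v,\grad v)\Bigr)\,dV_M,
\end{gather*}
with $\eta=\tfrac12\tilde f^2$ depending on $h$. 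First I would apply this with any fixed $h\in(0,\htd)$, and bound the right-hand side from above by Lemma~\ref{ahestimate}:
\begin{gather*}
\int_{\Sigma}|\nabla_\Sigma v|^2 -\Bigl(\tfrac{\partial v}{\partial\n}\Bigr)^2\,dV_\Sigma
\leq\frac{1+\cbm}{h}\int_M|\grad v|^2\,dV_M.
\end{gather*}
Now the key is to estimate the Dirichlet energy $\int_M|\grad v|^2$ in terms of boundary data. Since $\Delta v=0$, Green's formula gives $\int_M|\grad v|^2\,dV_M=\int_\Sigma v\,\tfrac{\partial v}{\partial\n}\,dV_\Sigma$, and by Cauchy--Schwarz together with the normalization $\int_\Sigma v^2\,dV_\Sigma=1$ this is at most $\bigl(\int_\Sigma(\tfrac{\partial v}{\partial\n})^2\,dV_\Sigma\bigr)^{1/2}$. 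Letting $h\to\htd$ (or taking the infimum over admissible $h$, which amounts to using $h$ as close to $\htd$ as we like) yields
\begin{gather*}
\int_\Sigma|\nabla_\Sigma v|^2\,dV_\Sigma\leq\int_\Sigma\Bigl(\tfrac{\partial v}{\partial\n}\Bigr)^2\,dV_\Sigma
+\frac{1+\cbm}{\htd}\Bigl(\int_\Sigma\bigl(\tfrac{\partial v}{\partial\n}\bigr)^2\,dV_\Sigma\Bigr)^{1/2},
\end{gather*}
which is exactly \eqref{tan} once we use the hypothesis $\constA\geq\tfrac{1}{\htd}(1+\cbm)$.

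For \eqref{nor} I would run the same identity but now use the \emph{lower} bound in Lemma~\ref{ahestimate}, which is valid for all $h\in(0,\hb)$:
\begin{gather*}
\int_{\Sigma}\Bigl(\tfrac{\partial v}{\partial\n}\Bigr)^2-|\nabla_\Sigma v|^2\,dV_\Sigma
=-\frac{1}{h}\int_{M_h}\Bigl(\Delta\eta\,|\grad v|^2-2\nabla^2\eta(\grad v,\grad v)\Bigr)\,dV_M
\leq\frac{1+\scbm}{h}\int_M|\grad v|^2\,dV_M.
\end{gather*}
Again bounding $\int_M|\grad v|^2\leq\bigl(\int_\Sigma(\tfrac{\partial v}{\partial\n})^2\bigr)^{1/2}$ and writing $t:=\bigl(\int_\Sigma(\tfrac{\partial v}{\partial\n})^2\,dV_\Sigma\bigr)^{1/2}\geq 0$, $L:=\int_\Sigma|\nabla_\Sigma v|^2\,dV_\Sigma\geq0$, and letting $h\to\hb$, we obtain the quadratic inequality $t^2-\tfrac{1+\scbm}{\hb}\,t-L\leq0$, i.e. $t^2-2\constB t-L\leq0$ using $\constB\geq\tfrac{1}{2\hb}(1+\scbm)$. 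Solving for $t$ gives $t\leq\constB+\sqrt{\constB^2+L}\leq\constB+\sqrt{\constB+L}$ when $\constB\leq 1$... (this last step needs care: the statement \eqref{nor} has $\sqrt{\constB+L}$ rather than $\sqrt{\constB^2+L}$, so I would either assume $\constB\le1$ or more likely the intended inequality reads $\sqrt{\constB^2+L}$; in any case solving the quadratic $t^2-2\constB t-L\le0$ is the substantive content). This yields \eqref{nor}.

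The main obstacle — or rather the only subtle point — is the passage $h\to\htd$ (resp. $h\to\hb$): the function $\eta$, the domain $M_h$, and hence both sides of the Pohozaev identity depend on $h$, so one must check that the estimates are uniform up to the endpoint. Since the constants $\cbm,\scbm$ produced in Lemma~\ref{ahestimate} were already shown there to be finite and $h$-independent (they bound $\cbm(h)$ and $\scbm(h)$ for all $h$ in the relevant range), the inequalities hold with a fixed constant for every $h<\htd$ (resp. $h<\hb$), and taking the supremum of $1/h$ over this range gives the factor $1/\htd$ (resp. $1/\hb$) in the clean form; alternatively one simply notes the identity and the two-sided bound hold for each such $h$ and picks $h$ arbitrarily close to the endpoint. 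Everything else is Green's formula and Cauchy--Schwarz, so no serious difficulty remains.
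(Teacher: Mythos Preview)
Your argument is correct and is essentially identical to the paper's proof: combine Lemma~\ref{lemma:analyticestimatedistance} with the two-sided bound of Lemma~\ref{ahestimate}, convert $\int_M|\grad v|^2$ into $\int_\Sigma v\,\partial_\n v$ via Green's formula for harmonic $v$, apply Cauchy--Schwarz with the normalization, and for \eqref{nor} solve the resulting quadratic in $t=\bigl(\int_\Sigma(\partial_\n v)^2\bigr)^{1/2}$. Your treatment of the passage $h\to\htd$ (resp.\ $h\to\hb$) is in fact more explicit than the paper's, which simply writes $\leq\constA\int_M|\grad v|^2$ without comment; and you are right that the quadratic yields $t\leq\constB+\sqrt{\constB^2+L}$, which is exactly what the paper's own proof obtains---the $\sqrt{\constB+L}$ in the displayed statement \eqref{nor} is a typo.
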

\begin{proof}
This is identical to the proof of Theorem 3.18 in \cite{PS16}, and included for the sake of completeness.
	 
\noindent
Proof of \eqref{tan}.
It follows from Lemma \ref{lemma:analyticestimatedistance} and
Lemma~\ref{ahestimate} that
\begin{align*}
\int_{\Sigma}|\nabla_\Sigma v|^2-\left(\frac{\partial v}{\partial\n}\right)^2\,dV_\Sigma&=
\frac{1}{h}\int_{M_h}|\grad v|^2 \Delta\eta -2\nabla^2\eta(\grad v,\grad v)\,dV_M\\
&\le\constA\int_{M}|\grad v|^2\,dV_M=\constA\int_{\Sigma}v\frac{\partial v}{\partial\n}\,dV_\Sigma.
\end{align*}
It follows from the Cauchy-Schwarz inequality that
$$\int_{\Sigma}|\nabla_\Sigma v|^2-\left(\frac{\partial v}{\partial\n}\right)^2\,dV_\Sigma\le\constA\left(\int_{\Sigma}\left(\frac{\partial v}{\partial\n}\right)^2 dV_\Sigma\right)^{1/2}.$$
This completes the proof of \eqref{tan}.\\

\noindent
Proof of \eqref{nor}. We repeat the calculation above using Lemma~\ref{lemma:analyticestimatedistance} and Lemma~\ref{ahestimate}.
\begin{align*}
\int_{\Sigma}|\nabla_\Sigma v|^2 -\left(\frac{\partial v}{\partial\n}\right)^2 dV_\Sigma
&=\frac{1}{h}\int_{M_h}|\grad v|^2 \Delta\eta-2\nabla^2\eta(\grad v,\grad v)\, dV_M\\
&\ge-2\constB\int_{M}|\grad v|^2\,dV_M=-2\constB\int_{\Sigma}v\frac{\partial v}{\partial\n} \,dV_\Sigma.
\end{align*}
It follows from the Cauchy-Schwarz inequality that
$$\int_{\Sigma}|\nabla_\Sigma v|^2 -\left(\frac{\partial v}{\partial\n}\right)^2 dV_\Sigma\ge-2\constB\left(\int_{\Sigma}\left(\frac{\partial v}{\partial\n}\right)^2 dV_\Sigma\right)^{1/2}.$$
We solve this inequality  in terms of the unknown $\left(\int_{\Sigma}\left(\frac{\partial v}{\partial\n}\right)^2dV_\Sigma\right)^{1/2}$ and get
\[\left(\int_{\Sigma}\left(\frac{\partial v}{\partial\n}\right)^2 dV_\Sigma\right)^{1/2}\le \constB+\sqrt{\constB^2+\int_{\Sigma}|\nabla_\Sigma v|^2\,dV_\Sigma}.\]
\end{proof}
\begin{rem}\label{remw/ricci}
Note that inequality \eqref{nor} is also true under the assumption of Lemma~\ref{estimatew/ricci}.
\end{rem}

\begin{rem}
It follows from the above that for each $f\in C^\infty(\Sigma)$ and $u=\mathcal{H}f\in C^\infty(M)$,
$$\bigl|\|\nabla_\Sigma f\|^2-\|Df\|^2\bigr|\leq \max\{A,2B\}\int_{M_h}|\nabla u|^2\,dV_M.$$
The tangential gradient and the DtN map have comparable norm, the difference being controlled by the Dirichlet energy of the harmonic extension. 
\end{rem}
\begin{rem}
If we keep the norm $\|v\|_\Sigma$ in the above proof of \eqref{nor} we get from the Cauchy-Schwarz inequality that
$$\int_{\Sigma}|\nabla_\Sigma v|^2 -\left(\frac{\partial v}{\partial\n}\right)^2 dV_\Sigma\ge-2\constB\left(\int_{\Sigma}v^2\,dV_\Sigma\right)^{1/2}\left(\int_{\Sigma}\left(\frac{\partial v}{\partial\n}\right)^2 dV_\Sigma\right)^{1/2}.$$
And solving the inequality leads to
\[\|Dv\|_{L^2(\Sigma)}\le \constB\|v\|_{L^2(\Sigma)}+\sqrt{\constB^2\|v\|_{L^2(\Sigma)}^2+\|\nabla_\Sigma v\|^2}\leq(\sqrt{\constB^2+1})\|v\|_{H^1(\Sigma)}.\]
This proves that the DtN map $D:H^1(\Sigma)\rightarrow L^2(\Sigma)$ is a bounded map and that its operator norm is smaller than $\sqrt{\constB^2+1}$. It is well-known that $D$ is bounded in this setting, but our result provide an explicit bound on its norm.
\end{rem}

%
%
\section{Proof of the main result}\label{section:mainresults}
We are now ready for the proof of the main result. The proof follows the same lines of argument as in \cite[Theorem 1.7]{PS16}. For the sake of completeness we give the proof here.

  
\begin{proof}[Proof of Theorem \ref{mainintro}]
Let us first recall the variational characterizations of the eigenvalues of Dirichlet-to-Neumann operator $\sigma_k$ and of the Laplacian $\lambda_k$. For each $k\in \N$,
\begin{equation}\label{stekminimax}
\sigma_k=\inf_{\substack{V\subset H^1(M),\\{\rm dim}V=k}}\sup_{\substack{v\in V,\\\int_{\Sigma}v^2dV_\Sigma=1}}\int_{M}|\grad v|^2dV_M\,;
\end{equation}
\begin{equation}\label{minimax}
\lambda_k=\inf_{\substack{V\subset H^1(\Sigma),\\{\rm dim}V=k}}\sup_{\substack{v\in V,\\\int_{\Sigma}v^2dV_\Sigma=1}}\int_{\Sigma}|\grad_\Sigma v|^2dV_\Sigma.
\end{equation}

\noindent
\textbf{Part a}.
Let $(\psi_i)_{i\in\N}\subset H^1(M)$ be a complete set of eigenfunctions corresponding to the Steklov eigenvalues $\sigma_i(M)$ such that their restrictions to $\Sigma$ form an orthonormal basis of $L^2(\Sigma)$. We use the space spanned by $\{\psi_i\bigl|\bigr._{\Sigma}\}_{i=1}^k$ as a trial space $V$ in \eqref{minimax}. Every $v\in V$ with $\int_{\Sigma} v^2dV_\Sigma=1$ can be written as $v=\sum c_i\psi_i\bigl|\bigr._{\Sigma}$ with $\sum c_i^2=1$. Note that $$\frac{\partial \psi_i}{\partial \n}=\sigma_i\psi_i,\quad\mbox{ on }~\Sigma\quad\mbox{ for }\quad i=1,\ldots, k.$$
Hence, using inequality \eqref{tan} we get 
\begin{align*}
\lambda_k&\le\sup_{\substack{0\ne u\in V\\\int_{\Sigma}u^2dV_\Sigma=1}}\int_{\Sigma}|\grad_\Sigma v|^2dV_\Sigma\\
&\le \sup_{\substack{0\ne u\in V\\\int_{\Sigma}u^2dV_\Sigma=1}} \int_{\Sigma}\left(\frac{\partial v}{\partial\n}\right)^2dV_\Sigma+{\constA}\left(\int_{\Sigma}\left(\frac{\partial v}{\partial\n}\right)^2dV_\Sigma\right)^{\frac{1}{2}}\\
&=\sup_{\substack{(c_1,\cdots,c_k)\in \Sp^{k-1}}}\left(\sum_{i=1}^kc_i^2\sigma_i^2+{\constA}\left(\sum_{i=1}^kc_i^2\sigma_i^2\right)^{\frac{1}{2}}\right)=\sigma_k^2+{\constA}\sigma_k.
\end{align*}
This proves inequality \eqref{ineq:main1}. 
%
%

\noindent
\textbf{Part b}.
We prove inequality \eqref{ineq:thm:mainstek} in an analogous way.  Let $\{\varphi_i\}_{i\in\N}\subset C^\infty(\Sigma)$ be an orthonormal basis of $L^2(\Sigma)$ such that $-\Delta_\Sigma\varphi_i=\lambda_i\varphi_i$ on $\Sigma$.

Consider the harmonic extensions $u_i=\mathcal{H}\varphi_i$ for $i=1,...,k$. They satisfy the following problem:
\begin{equation*}
\begin{cases}
\Delta u_i=0, & {\rm in\ }M,\\
u_i=\varphi_i, & {\rm on\ }\Sigma.
\end{cases}
\end{equation*}
Let $V\subset H^1(M)$ be the space spanned by $u_1,...u_k$. Any function $\phi\in V$ with $\int_{\Sigma}\phi^2dV_\Sigma=1$ can be written as $\phi=\sum_{i=1}^kc_iu_i$ with $\sum c_i^2=1$. Moreover $\Delta\phi=0$ for all $\phi\in V$. Using the min-max principle and  inequality \eqref{nor} together with the Cauchy-Schwarz inequality leads to
\begin{align*}
\sigma_k&\leq\sup_{\substack{0\ne \phi\in V\\\int_{\Sigma}\phi^2dV_\Sigma=1}}\int_{M}|\grad \phi|^2dV_M= \sup_{\substack{(c_1,...,c_k)\in\mathbb S^{k-1}}}\int_{M}\left|\grad\phi\right|^2dV_M\\
&\leq \max_{\substack{(c_1,...,c_k)\in\mathbb S^{k-1}}}\left(\int_{\Sigma} \left(\frac{\partial\phi}{\partial\n}\right)^2dV_\Sigma\right)^{\frac{1}{2}}\\
&\leq \constB+\left(\constB^2+\sup_{\substack{(c_1,...,c_k)\in\mathbb S^{k-1}}}\int_{\Sigma}\left|\grad_\Sigma\phi\right|^2\right)^{\frac{1}{2}}\\
&\leq {\constB}+\left(\constB^2+\sup_{\substack{(c_1,...,c_k)\in\mathbb S^{k-1}}}\sum_{i=1}^kc_i^2\lambda_i\right)^{\frac{1}{2}}
=\constB+\sqrt{\constB^2+\lambda_k}.
\end{align*}
This proves inequality \eqref{ineq:thm:mainstek}.

\noindent
\textbf{Part c}.
It remains to prove that $|\sigma_k-\sqrt{\lambda_k}|\leq\max\{\constA,2\constB\}$. Indeed,
\begin{align*}
\sigma_k-\sqrt{\lambda_k}&\leq \constB+\sqrt{\constB^2+\lambda_k}-\sqrt{\lambda_k}\\
&=\constB+\frac{\constB^2}{\sqrt{\constB^2+\lambda_k}+\sqrt{\lambda_k}}\leq 2\constB.
\end{align*}
Similarly,
\begin{align*}
\sqrt{\lambda_k}-\sigma_k&=\frac{\lambda_k-\sigma_k^2}{\sqrt{\lambda_k}+\sigma_k}
\leq \frac{\constA\sigma_k}{\sqrt{\lambda_k}+\sigma_k}\leq \constA.
\end{align*}
This implies
$|\sigma_k-\sqrt{\lambda_k}|\le\max\{\constA,2\constB\}.$
\end{proof}
The proof of Theorem \ref{thm:mainstek} is now a direct consequence.
\begin{proof}[Proof of Theorem \ref{thm:mainstek}]
Let $\Omega$ be a neighbourhood of the boundary $\Sigma$ and let $g_1,g_2$ be two Riemannian metrics which coincide on $\Omega$. Let $M_{\htd}$ be the admissible neighbourhood of $\Sigma$ for the metric $g_1$. Let $\hat{h}\in (0,\htd)$ be the largest number such that $M_{\hat{h}}\subset\Omega$. One can then define
$C>0$ using  formula \eqref{constant:CM}, for the metric $g_1$, with $\htd$ replaced by $\hat{h}$. Substituting $C$ for $\max\{\constA,2\constB\}$ in the proof of Theorem \ref{mainintro} leads to $|\sigma_k(g_i)-\sqrt{\lambda_k}|<C$ for $i=1,2$. This implies that $|\sigma_k(g_1)-\sigma_k(g_2)|\leq 2C$.
\end{proof}
\begin{proof}[Proof of Theorem \ref{rem:weakerH}]
The proof is identical to that of {\bf Part b} above, but we only need to use inequality \eqref{nor} under the assumption of Lemma \ref{estimatew/ricci} (see Remark \ref{remw/ricci}).
\end{proof}

\section{Signed curvature and convexity}\label{section:signedcurvature}
In this section we will give precise bounds on the constants $\constA$ and $\constB$ defined in \eqref{constant:CM} and prove Theorem \ref{thm:signedcurvature}.  In the situation where the sectional curvature is constrained to have a constant sign and where we impose convexity assumptions on the boundary $\Sigma$, we recover and improve the results of Xiong \cite{Xi}.

Our strategy is to estimate the quantities 
$(\htd-\delta)a(\delta)$, $(\htd-\delta)b(\delta)$ and $(\hb-\delta)\mu(\delta)$ which appear in the definitions \eqref{constant:cbm}  and \eqref{constant:scbm} of the constants $\cbm$ and $\scbm$, and thus also in the definitions \eqref{constant:CM} of $\constA$ and $\constB$. This is achieved by using the explicit formulas for $a(\delta)$, $b(\delta)$ and $\mu(\delta)$ provided by Lemma~\ref{lemma:riccationed} together with some technical lemmas regarding solutions of the Riccati equation in dimension $1$. These are stated in the next paragraph.

\subsection{Technical lemmas regarding Riccati equation in dimension 1}
In this paragraph we state a technical lemma which will be proved in the appendix. Its goal is to estimate the function
$f(x)=-(h-x)a(x)$ for solutions $a(x)$ of the Riccati equation.
\begin{lemma}\label{Lemma:technicalf(x)}
Let $K,\kappa\in\R$. Let $y:I\rightarrow\R$ be a solution to the Riccati equation
\begin{equation*}
y'+y^2+K=0, \quad y(0)=-\kappa,
\end{equation*}
with maximal interval $I$.  Let $h\in I$ and define the function $f:I\rightarrow\R$ by
$f(x)=-(h-x)y(x).$
The following inequalities hold:

\begin{center}
  \begin{tabular}{LCL}
  \mbox{\textbf{Condition on }}K&\mbox{\textbf{Condition on }}\kappa&\mbox{\textbf{Inequality}}\\
  \hline
K\geq 0&\kappa\in\R&\min\{0,h\kappa\}\leq f(x)\leq 1\\
K<0&\kappa\geq\sqrt{|K|}&0\leq f(x)\leq h\kappa
 \end{tabular}
\end{center}
\end{lemma}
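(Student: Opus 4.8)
The plan is to analyze the function $f(x) = -(h-x)y(x)$ by treating the two cases $K \geq 0$ and $K < 0$ (with $\kappa \geq \sqrt{|K|}$) separately, in each case relying on the explicit solution formulas from Lemma~\ref{lemma:riccationed}. The key structural observation is that $f$ vanishes at $x = h$, so its behavior on $[0,h)$ is governed by its value at $x=0$, namely $f(0) = -h\,y(0) = h\kappa$, together with monotonicity information obtained from $f'(x)$. Computing the derivative using the Riccati equation $y' = -(y^2+K)$ gives
\begin{equation*}
f'(x) = y(x) - (h-x)y'(x) = y(x) + (h-x)\bigl(y(x)^2 + K\bigr).
\end{equation*}
This expression, rewritten in terms of $f$ itself, will be the engine of the argument.

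For the case $K \geq 0$: the upper bound $f(x) \leq 1$ is already Lemma~\ref{lemma:rhonsmallerone} (applied to $-(h-s)y(s) \leq 1$), so it remains only to establish the lower bound $f(x) \geq \min\{0, h\kappa\}$. Here I would split according to the sign of $\kappa$. If $\kappa \leq 0$, then $y(0) = -\kappa \geq 0$; since $K \geq 0$, from the Riccati equation $y$ stays nonnegative (it cannot cross zero downward because $y'=-K\leq 0$ only when $y=0$, and one checks $y\geq 0$ is preserved — in fact when $\kappa<0$ the solution is an expanding sphere/hyperplane type with $y\geq 0$), hence $f(x) = -(h-x)y(x) \leq 0$; but we need $f(x)\geq h\kappa$, i.e. $-(h-x)y(x) \geq h\kappa$, which since $h\kappa\le 0$ and $-(h-x)y(x)$ could be negative needs the sharper monotonicity argument below. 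If $\kappa \geq 0$ the claim is $f(x) \geq 0$, i.e. $(h-x)y(x) \leq 0$, i.e. $y(x) \leq 0$ on $[0,h)$; this follows because $y(0) = -\kappa \leq 0$ and, with $K\geq 0$, $y'= -(y^2+K) \leq 0$ so $y$ is nonincreasing, hence stays $\leq 0$. For the remaining subcase I would instead argue directly from $f(0)=h\kappa$, $f(h)=0$ and a sign analysis of $f'$: show $f' \geq 0$ wherever $f \leq 0$ (using the rewritten derivative formula and $K\geq 0$), which forces $f$ to increase back up to $0$ and prevents it from dipping below $\min\{0,h\kappa\}$.

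For the case $K < 0$, $\kappa \geq \sqrt{|K|} =: \lambda$: by Lemma~\ref{lemma:riccationed}(c), $y$ is of contracting-sphere type, $y(s) = \lambda\coth(\lambda s - \arcoth(\kappa/\lambda))$ for $\kappa>\lambda$ (or a horosphere $y\equiv\kappa$ when $\kappa=\lambda$), with $y > 0$ throughout its maximal interval, which ends at $s = \tfrac{1}{\lambda}\arcoth(\kappa/\lambda)$. Since $y > 0$ we immediately get $f(x) = -(h-x)y(x) \leq 0$... wait — this is the wrong sign; in fact $y>0$ gives $f(x) = -(h-x)y(x) \le 0$, contradicting the claimed $f(x)\ge 0$. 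The resolution is the sign convention: here $\kappa$ in the lemma plays the role of a principal curvature and $y(0)=-\kappa$, so a \emph{contracting} sphere has $y(s) \leq 0$ initially when $\kappa>0$; I would recheck that $y(s) = -\lambda\coth(\dots)$ stays negative, giving $f(x) = -(h-x)y(x) \geq 0$, the lower bound. For the upper bound $f(x) \leq h\kappa$: again use $f(0) = h\kappa$, $f(h) = 0$, and show via the derivative formula that $f$ is monotone (decreasing) on $[0,h)$ — the point being that with $y<0$ and $h-x>0$, the term $y(x)+(h-x)(y(x)^2+K)$ can be bounded; here I would use the specific solution formula to verify monotonicity rather than an abstract comparison, so that the estimate $0 \le f(x) \le h\kappa$ follows by squeezing between the boundary values.

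The main obstacle I anticipate is getting the sign conventions exactly right — the lemma's $y(0)=-\kappa$ convention means "$\kappa>0$, contracting" corresponds to $y<0$, which is the opposite of naive intuition, and a single sign error propagates through both bounds. Once the signs are pinned down, the core of each case is the elementary ODE fact that a function vanishing at $x=h$ with controlled monotonicity on $[0,h)$ is trapped between $0$ and its value at $x=0$; the rewritten derivative identity $f'(x) = y(x) + (h-x)(y(x)^2+K)$ together with the explicit formulas from Lemma~\ref{lemma:riccationed} should make each monotonicity claim a short computation. I would also double-check the degenerate/boundary subcases ($\kappa = 0$ with $K\geq 0$, giving $y\equiv 0$, $f\equiv 0$; and $\kappa=\lambda$ with $K<0$, giving $y\equiv -\lambda$... or $y\equiv\lambda$? — the horosphere case) separately since the general formulas may not directly apply there.
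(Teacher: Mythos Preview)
Your plan is sound and would yield a correct proof once the sign conventions are settled. For the record: with $\kappa>\lambda=\sqrt{|K|}$ the solution is $y(s)=\lambda\coth\bigl(\lambda s-\arcoth(\kappa/\lambda)\bigr)$, and since the argument of $\coth$ is negative on the maximal interval $(0,\tfrac1\lambda\arcoth(\kappa/\lambda))$, one has $y<0$ there; the horosphere case $\kappa=\lambda$ gives $y\equiv-\lambda<0$. So $f(x)\geq 0$ is immediate in Case~2, as you eventually concluded.

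Your approach differs from the paper's in two places worth noting. For the $K\geq 0$ lower bound, the paper argues case by case from the explicit formulas of Lemma~\ref{lemma:riccationed} (separately for $K=0$ and $K>0$, and within each further splitting on the sign of $\kappa$, computing $f'$ explicitly only in the subcase $K>0$, $\arcot(\kappa/\lambda)-\lambda s\geq\pi/2$). Your observation that $f'=y+(h-x)(y^2+K)\geq 0$ whenever $f\leq 0$ (since $f\leq 0$ forces $y\geq 0$) gives a uniform argument covering all subcases at once; this is cleaner and avoids the explicit trigonometric manipulations. For the $K<0$ upper bound, the paper does not compute $f'$ directly but instead rewrites $f(x)=\psi_\gamma(\lambda h-\lambda x)$ with $\psi_\gamma(t)=t\coth(t+\gamma)$ and uses that $\psi_\gamma$ is increasing for $\gamma\geq 0$; this is a bit slicker than verifying $f'\leq 0$ from the explicit formula, though your route also works (after substituting the formula it reduces to the elementary inequality $t/\sinh t\leq\cosh t$).
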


\subsection{Totally geodesic boundary}

This corresponds to the parts (1) and (2) of Theorem \ref{thm:signedcurvature}.

\begin{proposition}\label{propo:totallygeodesicbdry}
Let $M$ be a compact manifold with nonempty totally geodesic boundary $\Sigma$. Assume that $|K|<\lambda^2$, where $\lambda>0$, on $M_{\hb}$. Then, \eqref{ineq:main1} and \eqref{ineq:thm:mainstek} hold with
$$\constA=\frac{n}{\tilde h}+\lambda\quad \mbox{and}\quad\constB =\frac{1}{2}\left(\frac{1}{\hb}+{n\lambda}\right),$$
where $\htd=\min\{\frac{\pi}{2\lambda},\hb\}.$

\smallskip

\noindent
Moreover, if $-\lambda^2<K<0$, then \eqref{ineq:main1} and \eqref{ineq:thm:mainstek} hold with
$$\constA=\frac{1}{\bar h}+\lambda\quad\mbox{and}\quad \constB=\frac{1}{2}\left(\frac{1}{\bar h}+n\lambda\right).$$
	\medskip
On the other hand, if $0<K<\lambda^2$, then \eqref{ineq:main1} and \eqref{ineq:thm:mainstek} hold with
$$\constA=n\max\{\bar h^{-1},\frac{2\lambda}{\pi}\},\quad\text{and}\quad \constB=\frac{1}{2\hb}.$$
\end{proposition}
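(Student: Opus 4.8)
The plan is to read off the constants directly from Theorem~\ref{equi1}: by \eqref{constant:CM} it suffices to bound the quantities $\cbm$ and $\scbm$, and by \eqref{constant:cbm}, \eqref{constant:scbm}, \eqref{constantAh}, \eqref{constantBh} this in turn reduces to estimating the three products $(h-\delta)a(\delta)$, $(h-\delta)b(\delta)$, $(h-\delta)\mu(\delta)$ over $\delta\in(0,h)$, $h\in(0,\htd)$ (resp.\ $h\in(0,\hb)$ for $\scbm$). Here $a,b,\mu$ solve the one-dimensional Riccati problems of Theorems~\ref{thm:shapecomparisonA}--\ref{thm:shapecomparisonC} with the geometric data of $M$. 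Since $\Sigma$ is totally geodesic we have $\kappa_-=\kappa_+=0$, so all three problems have initial value $y(0)=0$; the hypothesis $|K|\le\lambda^2$ lets us take $\alpha=-\lambda^2$ and $\beta_+=\max\{0,\beta\}=\lambda^2$, while $\Ric\ge-n\lambda^2$ and $H\equiv0$ furnish the data for $\mu$.

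First I would identify the comparison functions via Lemma~\ref{lemma:riccationed}. Case~(c) with $\kappa=0$ (the ``hyperbolic subspace'' line) gives $a(\delta)=\mu(\delta)=\lambda\tanh(\lambda\delta)$, defined for all $\delta\ge0$; case~(b) with $\kappa=0$ gives $b(\delta)=-\lambda\tan(\lambda\delta)$, defined on $[0,\tfrac{\pi}{2\lambda})$, so that $\htd=\min\{\tfrac{\pi}{2\lambda},\hb\}$ as asserted. Then, using $\tanh\le1$, one has $(h-\delta)a(\delta)\le h\lambda\le\htd\lambda$; and, since $b$ solves a Riccati equation with nonnegative curvature constant $\beta_+$, Lemma~\ref{lemma:rhonsmallerone} yields $-(n-1)(h-\delta)b(\delta)\le n-1$. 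Substituting into \eqref{constantAh} gives $\cbm(h)\le(n-1)+\htd\lambda$, hence one may take $\cbm=(n-1)+\htd\lambda$ and $\constA=\tfrac1{\htd}(1+\cbm)=\tfrac n{\htd}+\lambda$. Likewise $(h-\delta)\mu(\delta)\le h\lambda\le\hb\lambda$, so $\scbm\le n\hb\lambda$ and $\constB=\tfrac1{2\hb}(1+\scbm)=\tfrac12\bigl(\tfrac1{\hb}+n\lambda\bigr)$, proving the first claim.

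For the signed refinements the same computation simplifies. If $-\lambda^2<K<0$ one may choose $\beta_+=0$, so $b\equiv0$ (the ``parallel hyperplanes'' line) and $\htd=\hb$; the $(n-1)$ term then drops out and $\cbm=\hb\lambda$, giving $\constA=\tfrac1{\hb}+\lambda$, while $\mu$ is unchanged so $\constB=\tfrac12\bigl(\tfrac1{\hb}+n\lambda\bigr)$. If $0<K<\lambda^2$ one may choose $\alpha=0$, so $a\equiv\mu\equiv0$; then $\cbm(h)=\max_{0\le\delta<h}(n-1)\bigl(-(h-\delta)b(\delta)\bigr)\le n-1$ by Lemma~\ref{lemma:rhonsmallerone}, whence $\constA=\tfrac n{\htd}=n\max\{\hb^{-1},\tfrac{2\lambda}{\pi}\}$, and $\scbm=0$ by Remark~\ref{Bvalue0}, whence $\constB=\tfrac1{2\hb}$.

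The genuinely hard inputs — the curvature comparison Theorems~\ref{thm:shapecomparisonA}--\ref{thm:shapecomparisonC}, the estimate $-(h-\delta)y\le1$ of Lemma~\ref{lemma:rhonsmallerone}, and the Pohozaev-based Theorem~\ref{equi1} — are already available, so the only real care needed is bookkeeping: choosing the comparison curvature constants correctly ($\alpha$ versus $\beta_+=\max\{0,\beta\}$, and the freedom to replace $\alpha$ by $0$ when $K>0$) and tracking the maximal existence times so that $\htd=\min\{m(\beta_+,\kappa_+),\hb\}$ is evaluated correctly in each of the three regimes. I expect this case analysis, rather than any single estimate, to be the main point to get right.
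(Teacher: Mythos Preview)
Your proposal is correct and follows essentially the same route as the paper: identify the comparison functions $a,b,\mu$ from Lemma~\ref{lemma:riccationed} with $\kappa_-=\kappa_+=0$, bound $(h-\delta)a(\delta)$ and $(h-\delta)\mu(\delta)$ by $\htd\lambda$ and $\hb\lambda$ via $\tanh\le 1$, invoke Lemma~\ref{lemma:rhonsmallerone} (the paper cites the slightly more general Lemma~\ref{Lemma:technicalf(x)}) for $-(h-\delta)b(\delta)\le 1$, and then read off $\cbm,\scbm$ and hence $\constA,\constB$ from \eqref{constant:CM}. The signed-curvature refinements are handled identically in both, by adjusting $\alpha$ or $\beta_+$ to zero.
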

\begin{proof}
It follows from Corollary \ref{coro:curvaturebounds} and Lemma \ref{lemma:riccationed} that
$$\htd=\min\{\roll(M),m(\lambda^2,0)\}=\min\{\frac{\pi}{2\lambda},\hb\}.$$
Moreover, the corresponding comparison functions are
\begin{align*}
\mu(\delta)=a(\delta)=\lambda\tanh(\lambda\delta)\qquad\mbox{ and }\qquad
b(\delta)=-\lambda\tan(\lambda \delta).
\end{align*}
It follows from Lemma \ref{Lemma:technicalf(x)} that $-(h-\delta)b(\delta)\leq 1$. Moreover, for every $0<\delta\le h\le \tilde h\le\hb$
$$(h-\delta)\lambda\tanh(\lambda\delta)<\lambda \tilde h.$$
Whence, 
$$\cbm(h)\le(n-1)+\lambda \tilde h\quad\mbox{and} \quad\scbm(h)\leq n\lambda \hb.$$
One can take
$$\constA=\frac{n}{\tilde h}+\lambda\quad \mbox{and}\quad\constB =\frac{1}{2}\left(\frac{1}{\hb}+{n\lambda}\right).$$

In the situation where $K<0$, 
$$\htd=\min\{\roll(M),m(0,0)\}=\hb.$$
The comparison functions are
\begin{align*}
\mu(\delta)=a(\delta)=\lambda\tanh(\lambda\delta)\qquad\mbox{ and }\qquad
b(\delta)=0.
\end{align*}
We have
$(h-\delta)a(\delta)\leq\hb\lambda$ and
$$\cbm\leq \hb\lambda\quad\mbox{and}\quad \scbm\le n\hb\lambda.$$ Thus one can take
$$\constA=\frac{1}{\bar h}+\lambda\quad\mbox{and}\quad \constB=\frac{1}{2}\left(\frac{1}{\bar h}+n\lambda\right).$$
In the case $K>0$ the comparison functions are
\begin{align*}
\mu(\delta)=a(\delta)=0\qquad b(\delta)=-\lambda\tan(\lambda\delta).
\end{align*} and as mentioned above $-(h-\delta)b(\delta)\le1$. Therefore, one can take
$$\constA=n\max\{\bar h^{-1},\frac{2\lambda}{\pi}\},\quad\text{and}\quad \constB=\frac{1}{2\hb}.$$
\end{proof}
Part (2) of Theorem \ref{thm:signedcurvature} follows immediately from the proof of Proposition \ref{propo:totallygeodesicbdry}.

\subsection{Horoconvex boundary} This corresponds to the part (3) of Theorem \ref{thm:signedcurvature}. Our hypothesis on the principal curvature implies local convexity as in \cite{Xi}, but the boundary of $M$ is not necessarily  connected.
\begin{proposition}\label{propo:horoconvexbdry}
Let $M$ be a compact manifold with nonempty boundary $\Sigma$. Assume that $-\lambda^2\leq K\leq 0$ on $M_{\hb}$ (where $\lambda>0$) and assume that each of the principal curvatures of $\Sigma$ satisfies
$\lambda<\kappa_i\leq\kappa_+$.
Then, one can take
$$\constA=n\max\{\kappa_+,\frac{1}{\hb}\}\qquad\mbox{and}\qquad\constB=\frac{1}{2\hb}.$$
\end{proposition}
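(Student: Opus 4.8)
The plan is to follow the strategy of the proof of Proposition~\ref{propo:totallygeodesicbdry}: determine the admissible radius $\htd$ and the one‑dimensional comparison functions $a,b,\mu$ of Lemma~\ref{lemma:riccationed}, bound the constants $\cbm$ and $\scbm$ of \eqref{constantAh}, \eqref{constant:cbm}, \eqref{constant:scbm}, and then read off $\constA$ and $\constB$ from \eqref{constant:CM}. Here $\alpha=-\lambda^2$ and $\beta=0$, so $\beta_+=0$; and since every principal curvature of $\Sigma$ exceeds $\lambda$ we may take $\kappa_-=\lambda$ as a lower bound both for the principal curvatures of $\Sigma$ and for its mean curvature, so that $M\in\mathcal{M}(n,-\lambda^2,0,\lambda,\kappa_+,\hb)$. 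From $\beta_+=0$ and Lemma~\ref{lemma:riccationed} we get $m(\beta_+,\kappa_+)=1/\kappa_+$, hence $\htd=\min\{\roll(M),1/\kappa_+\}=\min\{\hb,1/\kappa_+\}$ and $1/\htd=\max\{\kappa_+,1/\hb\}$.

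Next I would record the comparison functions. Because $|\kappa_-|=\lambda=\sqrt{|\alpha|}$, the solution of $y'+y^2-\lambda^2=0$ with $y(0)=-\lambda$ is simply the constant (horosphere) solution $y\equiv-\lambda$, so $a(\delta)=\mu(\delta)\equiv-\lambda$ on $[0,\hb)$; and the solution of $b'+b^2=0$ with $b(0)=-\kappa_+$ is the contracting Euclidean sphere $b(\delta)=(\delta-1/\kappa_+)^{-1}<0$ on $[0,\htd)$. Since $\beta_+=0\geq 0$, Lemma~\ref{lemma:rhonsmallerone} (or Lemma~\ref{Lemma:technicalf(x)}) gives $-(h-\delta)b(\delta)\leq 1$ for $0\leq\delta<h<\htd$, while trivially $(h-\delta)a(\delta)\leq 0$. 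Plugging these into \eqref{constantAh} yields
$$\cbm(h)=\max_{0\leq\delta<h}(h-\delta)\bigl(a(\delta)-(n-1)b(\delta)\bigr)\leq 0+(n-1)=n-1,$$
so $\cbm=n-1$ is admissible in \eqref{constant:cbm}. Since the comparison lower bound $\mu(\delta)\equiv-\lambda$ for the mean curvature of parallel hypersurfaces is nonpositive, Remark~\ref{Bvalue0} lets me take $\scbm=0$.

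Substituting into \eqref{constant:CM} then gives $\constA=\tfrac{1}{\htd}(1+\cbm)=\tfrac{n}{\htd}=n\max\{\kappa_+,1/\hb\}$ and $\constB=\tfrac{1}{2\hb}(1+\scbm)=\tfrac{1}{2\hb}$, and \eqref{ineq:main1} and \eqref{ineq:thm:mainstek} follow from Theorem~\ref{equi1} together with the min‑max argument used in the proof of Theorem~\ref{mainintro}. The computation is largely mechanical; the one point requiring genuine attention is recognizing that the hypothesis $\kappa_i>\lambda$ is exactly borderline convexity relative to the lower curvature bound $K\geq-\lambda^2$, so that the comparison functions $a$ and $\mu$ collapse to the constant horosphere value $-\lambda$ — this is what simultaneously keeps $\cbm=n-1$ and forces $\scbm=0$ — together with the observation that the bound $-(h-\delta)b(\delta)\leq 1$ must be obtained by comparison with a space of \emph{nonnegative} curvature, which is precisely why $b$ is built from $\beta_+$ rather than from $\beta$.
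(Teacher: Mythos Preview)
Your proof is correct and follows essentially the same route as the paper's: identify $\htd=\min\{\hb,1/\kappa_+\}$, use the horosphere solution $a=\mu\equiv-\lambda$ (from taking $\kappa_-=\lambda$) and the Euclidean contracting-sphere solution for $b$, obtain $\cbm\leq n-1$ and $\scbm=0$ via Remark~\ref{Bvalue0}, and read off $\constA=n/\htd$, $\constB=1/(2\hb)$. The only minor wording slip is that $\mu(\delta)=-\lambda$ is not itself the lower bound for the mean curvature (rather $-\mu(\delta)=\lambda$ is), but the hypothesis of Remark~\ref{Bvalue0} is precisely that $\mu(\delta)\leq 0$, which you have.
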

\begin{proof}
It follows from Corollary \ref{coro:curvaturebounds} and Lemma \ref{lemma:riccationed} that
$$\htd=\min\{\roll(M),m(0,\kappa_+)\}=\min\{1/\kappa_+,\hb\}.$$
Moreover, the corresponding comparison functions satisfy
\begin{align*}
\mu(\delta)= a(\delta)=-\lambda \qquad\mbox{ and }\qquad
{-b(\delta)}=\frac{1}{-\delta+1/\kappa_+}.
\end{align*}
It follows that $(\htd-\delta)a(\delta)\leq 0$.
Moreover,
\begin{align*}
-(h-\delta)b(\delta)\leq \left(\frac{1}{\kappa_+}-\delta\right)\left(\frac{1}{-\delta+1/\kappa_+}\right)=1.
\end{align*} 
Whence, $\cbm\leq n-1$ and we can take $\scbm=0$ by Remark \ref{Bvalue0}. Therefore one can take
$$ \constA=\frac{n}{\htd}\qquad\mbox{and}\qquad \constB=\frac{1}{2\hb}.$$
\end{proof}

\subsection{Positive sectional curvature}

This corresponds to the part (4) of Theorem~\ref{thm:signedcurvature}.

\begin{proposition}\label{prop31}
Let $M$ be a compact manifold with nonempty boundary $\Sigma$. Let $\hb=\roll(M)$ be its rolling radius. Assume that $0<K<\lambda^2$, where $\lambda>0$, on $M_{\hb}$. Also assume that each principal curvatures satisfy  $0\leq\kappa_i\leq\kappa_+$ on $\Sigma$. 
Then, one can take
$$\constA=n\max\{\frac{1}{\hb},\sqrt{\lambda^2+\kappa_+^2}\},\quad\text{and}\quad \constB=\frac{1}{2\hb}.$$
\end{proposition}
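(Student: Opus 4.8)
The plan is to follow the scheme used for Propositions~\ref{propo:totallygeodesicbdry} and~\ref{propo:horoconvexbdry}: I would first identify the one-dimensional comparison functions $a,b,\mu$ of Corollary~\ref{coro:curvaturebounds} and Theorem~\ref{thm:shapecomparisonC} explicitly via Lemma~\ref{lemma:riccationed}, then estimate the quantities $(\htd-\delta)a(\delta)$, $(\htd-\delta)b(\delta)$ and $(\hb-\delta)\mu(\delta)$ entering \eqref{constantAh} and \eqref{constantBh}, and finally read off admissible $\constA$ and $\constB$ from \eqref{constant:cbm}, \eqref{constant:scbm} and \eqref{constant:CM}.

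Since $0<K<\lambda^2$ on $M_\hb$ and $0\le\kappa_i\le\kappa_+$ on $\Sigma$, the comparison data to use are $\alpha=0$, $\kappa_-=0$, $\beta=\lambda^2$ (so that $\beta_+=\lambda^2$); moreover $\Ric>0$ and $H\ge 0$, so Theorem~\ref{thm:shapecomparisonC} applies with $\alpha=0$, $\kappa_-=0$ as well. By Lemma~\ref{lemma:riccationed} the corresponding solutions are $\mu(\delta)=a(\delta)\equiv 0$ and $b(\delta)=-\lambda\tan(\lambda\delta+\arctan(\kappa_+/\lambda))$, with $m(\beta_+,\kappa_+)=\frac1\lambda\arcot(\kappa_+/\lambda)$, hence $\htd=\min\{\frac1\lambda\arcot(\kappa_+/\lambda),\hb\}$. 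Because $\mu\equiv 0$, formula \eqref{constantBh} gives $\scbm(h)=0$, so by Remark~\ref{Bvalue0} one may take $\scbm=0$, which via \eqref{constant:CM} yields $\constB=\frac{1}{2\hb}$.

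For $\constA$, the vanishing of $a$ reduces \eqref{constantAh} to $\cbm(h)=(n-1)\max_{0\le\delta<h}\big(-(h-\delta)b(\delta)\big)$. Since $K=\lambda^2\ge 0$, Lemma~\ref{lemma:rhonsmallerone} (equivalently the first line of Lemma~\ref{Lemma:technicalf(x)}) shows $-(h-\delta)b(\delta)\le 1$ for all $0\le\delta<h<\htd$, so $\cbm\le n-1$ and \eqref{constant:CM} permits $\constA=\frac{1}{\htd}(1+\cbm)=\frac{n}{\htd}$.

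It then remains to replace $1/\htd$ by $\max\{1/\hb,\sqrt{\lambda^2+\kappa_+^2}\}$, i.e.\ to verify $\htd\ge\min\{\hb,(\lambda^2+\kappa_+^2)^{-1/2}\}$, for which it suffices to prove $\frac1\lambda\arcot(\kappa_+/\lambda)\ge(\lambda^2+\kappa_+^2)^{-1/2}$. Setting $\phi=\arcot(\kappa_+/\lambda)\in(0,\pi/2)$ one has $\cot\phi=\kappa_+/\lambda$ and $\sqrt{\lambda^2+\kappa_+^2}=\lambda\sqrt{1+\cot^2\phi}=\lambda/\sin\phi$, so the inequality becomes the classical $\phi\ge\sin\phi$. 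This produces the asserted $\constA=n\max\{1/\hb,\sqrt{\lambda^2+\kappa_+^2}\}$ and $\constB=\frac{1}{2\hb}$. The whole argument is essentially bookkeeping on top of the already-established comparison machinery; the only genuine (if elementary) step, and the one that pins down the precise form of $\constA$, is this final reduction to $\phi\ge\sin\phi$.
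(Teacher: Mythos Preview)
Your proposal is correct and follows essentially the same approach as the paper: you identify the same comparison functions $a=\mu\equiv 0$ and $b(\delta)=-\lambda\tan(\lambda\delta+\arctan(\kappa_+/\lambda))$, use Lemma~\ref{lemma:rhonsmallerone} (the paper quotes Lemma~\ref{Lemma:technicalf(x)}, which subsumes it) to obtain $\cbm\le n-1$ and $\scbm=0$, and then prove the same trigonometric inequality $\frac{1}{\lambda}\arcot(\kappa_+/\lambda)\ge(\lambda^2+\kappa_+^2)^{-1/2}$. Your reduction via the substitution $\phi=\arcot(\kappa_+/\lambda)$ to $\phi\ge\sin\phi$ is in fact a slightly cleaner presentation of the paper's argument, which applies $\sin$ to both sides and invokes $\sin x\le x$ at the end.
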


\begin{proof}
It follows from Corollary \ref{coro:curvaturebounds} and Lemma \ref{lemma:riccationed} that
$$\htd=\min\{\roll(M),m(\lambda^2,\kappa_+)\}=\min\{\frac{1}{\lambda}(\frac{\pi}{2}-\arctan(\frac{\kappa_+}{\lambda})),\hb\}.$$
Let us show that
\begin{gather}\label{ineq:signedworkingpos}
\frac{\lambda}{\frac{\pi}{2}-\arctan(\frac{\kappa_+}{\lambda})}
\leq\sqrt{\lambda^2+\kappa_+^2}.
\end{gather}
Indeed this is equivalent to
\begin{gather*}
\frac{\lambda}{\sqrt{\lambda^2+\kappa_+^2}}
\leq\frac{\pi}{2}-\arctan(\frac{\kappa_+}{\lambda}),
\end{gather*}
Using that $\sin$ is increasing on $[0,\pi/2]$ this leads to the equivalent
\begin{gather*}
\sin(\frac{\lambda}{\sqrt{\lambda^2+\kappa_+^2}})
\leq\sin(\frac{\pi}{2}-\arctan(\frac{\kappa_+}{\lambda}))
=\cos(\arctan(\frac{\kappa_+}{\lambda})=\frac{\lambda}{\sqrt{\lambda^2+\kappa_+^2}},
\end{gather*}
which is true.

Moreover, the corresponding comparison functions satisfy
\begin{align*}
\mu(\delta)=a(\delta)=0
\mbox{ and }
-b(\delta)=\lambda\tan\left(\lambda\delta+\frac{1}{\lambda}\arctan(\frac{\kappa_+}{\lambda})\right)
\end{align*}

It follows from Lemma \ref{Lemma:technicalf(x)} that $-(h-\delta)b(\delta)\leq \kappa_+ h\leq 1$.
Whence, $\cbm\leq n-1$, $\scbm=0$, and one can take
$$\constA=n\max\{\frac{1}{\hb},\sqrt{\lambda^2+\kappa_+^2}\}\quad\text{ and }\quad \constB=\frac{1}{2\hb}.$$

\end{proof}

In some situations, the work of Donnelly and Lee \cite{DL91} can be used to compute $\bar h$ explicitly.
\begin{cor}\label{gXiong}
Let $M$ be a domain in a complete Riemannian manifold. Suppose that $\Sigma$ is connected and convex ($\kappa_->0$). Let $\lambda,\kappa_+>0$. Then 
\begin{enumerate}
\item Suppose that $\Sigma$ is totally geodesic.
If $0<K<\lambda^2$, then \eqref{ineq:main1} and \eqref{ineq:thm:mainstek} hold with
$$\constA={\frac{2n\lambda}{\pi}}\quad\text{and}\quad \constB=\frac{\lambda}{\pi}.$$
\item If $-\lambda^2\leq K\leq 0$ and each principal curvature satisfies $\lambda<\kappa_i<\kappa_+$, then \eqref{ineq:main1} and \eqref{ineq:thm:mainstek} hold with
$$\constA=n\kappa_+\qquad\mbox{and}\qquad{\constB=\frac{1}{2}\kappa_+}.$$
\item If $0<K<\lambda^2$ and each principal curvature satisfies $0<\kappa_i<\kappa_+$, then \eqref{ineq:main1} and \eqref{ineq:thm:mainstek} hold with
$$\constA=n\sqrt{\lambda^2+\kappa_+^2}\quad\text{and}\quad \constB= \frac{1}{2}\sqrt{\lambda^2+\kappa_+^2}.$$
\end{enumerate}
\end{cor}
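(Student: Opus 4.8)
The plan is to combine the three propositions proved above (Propositions \ref{propo:totallygeodesicbdry}, \ref{propo:horoconvexbdry}, \ref{prop31}) with an explicit lower bound for the rolling radius $\bar h = \roll(M)$ coming from the Donnelly--Lee estimate \cite[Theorem 3.1]{DL91}, valid because here $\Sigma$ is assumed connected and strictly convex ($\kappa_->0$). Each of the three cases in the corollary is obtained from the corresponding case of Theorem \ref{thm:signedcurvature} (equivalently, from the matching proposition) by substituting the concrete value of $\bar h$, which under convexity is controlled from below by a quantity that exactly cancels the $\bar h^{-1}$ terms in the constants $\constA,\constB$. Concretely: in case (1), $\Sigma$ totally geodesic with $0<K<\lambda^2$, Proposition \ref{propo:totallygeodesicbdry} gives $\constA=n\max\{\bar h^{-1},2\lambda/\pi\}$ and $\constB=\frac{1}{2\bar h}$; the Donnelly--Lee bound for a convex domain in a manifold with $K<\lambda^2$ yields $\bar h\geq \pi/(2\lambda)$, whence $\bar h^{-1}\leq 2\lambda/\pi$, so $\constA=2n\lambda/\pi$ and $\constB=\frac{1}{2\bar h}\leq\lambda/\pi$. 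In case (3), $0<K<\lambda^2$ and $0<\kappa_i<\kappa_+$, Proposition \ref{prop31} gives $\constA=n\max\{\bar h^{-1},\sqrt{\lambda^2+\kappa_+^2}\}$ and $\constB=\frac{1}{2\bar h}$; the Donnelly--Lee estimate for a convex domain with $K<\lambda^2$ and shape operator bounded by $\kappa_+$ gives $\bar h\geq (\lambda^2+\kappa_+^2)^{-1/2}$ (this is the rolling radius of the umbilic sphere of curvature $\kappa_+$ in the comparison sphere $\mathbb S_{\lambda^2}$, which by Theorem \ref{thm:shapecomparisonA} and convexity is a lower bound), whence $\bar h^{-1}\leq\sqrt{\lambda^2+\kappa_+^2}$ and the constants collapse to $\constA=n\sqrt{\lambda^2+\kappa_+^2}$, $\constB=\tfrac12\sqrt{\lambda^2+\kappa_+^2}$.

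For case (2), $-\lambda^2\leq K\leq 0$ with $\lambda<\kappa_i<\kappa_+$, I would start from Proposition \ref{propo:horoconvexbdry}, which gives $\constA=n\max\{\kappa_+,\bar h^{-1}\}$ and $\constB=\frac{1}{2\bar h}$ (here $\htd=\min\{1/\kappa_+,\bar h\}$). Again by Theorem \ref{thm:shapecomparisonA}, strict convexity of $\Sigma$ together with $K\geq -\lambda^2$ forces the parallel hypersurfaces to remain smooth until the solution of $a'+a^2-\lambda^2=0$, $a(0)=-\kappa_+$ blows up; since $\kappa_+>\lambda$ this solution is the contracting-sphere branch $\lambda\coth(\lambda s-\arcoth(\kappa_+/\lambda))$ of Lemma \ref{lemma:riccationed}, whose blow-up time is $\tfrac1\lambda\arcoth(\kappa_+/\lambda)$. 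But in fact the relevant comparison for a lower bound on $\bar h$ in a manifold of nonpositive curvature is with Euclidean space, giving $\bar h\geq 1/\kappa_+$ (the Donnelly--Lee bound again), so $\bar h^{-1}\leq\kappa_+$ and $\max\{\kappa_+,\bar h^{-1}\}=\kappa_+$, yielding $\constA=n\kappa_+$ and $\constB=\tfrac12\kappa_+$. This reproduces exactly Corollary \ref{Xiong}(1), as desired.

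The routine part is the arithmetic of substituting the rolling-radius bounds; the only genuinely delicate point — the main obstacle — is to justify cleanly the lower bound $\bar h\geq\pi/(2\lambda)$, $\bar h\geq 1/\kappa_+$, and $\bar h\geq(\lambda^2+\kappa_+^2)^{-1/2}$ in each regime. For a domain $\Omega$ in a complete manifold with convex boundary, this is precisely the content of the Donnelly--Lee rolling-radius estimate for strictly convex domains referenced in Remark \ref{moved}; alternatively it can be derived directly from Theorem \ref{thm:shapecomparisonA}, since that theorem bounds $\roll(M)$ from below by the blow-up time $m(\alpha,\kappa_-)$ of the model Riccati solution with the optimal curvature sign, and Lemma \ref{lemma:riccationed} provides the explicit blow-up times (which are exactly $\pi/(2\lambda)$ for the $K\equiv\lambda^2$, $\kappa_-=0$ model; $\infty$ in nonpositive curvature so that $\bar h$ is only limited by $1/\kappa_+$ via the $\htd$ defined in \eqref{def:htd}; and, for the positively curved case with convex boundary, the contracting-sphere blow-up time which is bounded below by $(\lambda^2+\kappa_+^2)^{-1/2}$ via inequality \eqref{ineq:signedworkingpos}). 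Once these three bounds are in hand the corollary follows by inspection from the three propositions.
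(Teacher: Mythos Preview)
Your main line of argument --- plug the Donnelly--Lee lower bounds on $\bar h$ into Propositions \ref{propo:totallygeodesicbdry}, \ref{propo:horoconvexbdry}, \ref{prop31} and watch the $\bar h^{-1}$ terms collapse --- is exactly what the paper does. The paper simply tabulates the Donnelly--Lee values of $\bar h$ (from \cite[Theorems 3.11, 3.22]{DL91}) and substitutes; for part (3) it invokes the same inequality \eqref{ineq:signedworkingpos} you cite to convert $\bar h=\tfrac1\lambda\arcot(\kappa_+/\lambda)$ into $\bar h^{-1}\le\sqrt{\lambda^2+\kappa_+^2}$.

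There is, however, a genuine error in your proposed \emph{alternative} justification. You write that Theorem \ref{thm:shapecomparisonA} ``bounds $\roll(M)$ from below by the blow-up time $m(\alpha,\kappa_-)$''. It does the opposite: Remark \ref{moved} reads $\roll(M)\le m(\alpha,\kappa_-)$. Theorem \ref{thm:shapecomparisonA} compares with a space of curvature $\alpha\le K$ and gives an \emph{upper} bound on the shape operator, hence an upper bound on the rolling radius. Likewise your discussion in case (2), where you say convexity and $K\ge-\lambda^2$ force the parallel hypersurfaces to remain smooth until the model solution with initial value $-\kappa_+$ blows up, inverts both the curvature direction and the initial condition. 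A lower bound on $\bar h$ cannot be extracted from the Riccati comparison theorems in the paper; it genuinely requires the Donnelly--Lee argument (which uses that $M$ is a convex domain inside a complete ambient manifold, information not encoded in the local Riccati comparison). So keep your primary route and drop the ``alternatively'' clause.
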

\begin{proof}
The estimates on $\hb$ given in \cite[Theorem 3.11, Theorem 3.22]{DL91} are summarised in the following table.\\
$$\begin{tabular}{LLC}
 \multicolumn{2}{C}{\mbox{\bf Description}}& \bar h\\
  \hline
   |K|<\lambda^2,&0<\kappa_-<\lambda,&\frac{1}{\lambda}\min\left\{{\arctanh\left(\frac{\kappa_-}{\lambda}\right)}, {\arcot\left(\frac{\kappa_+}{\lambda}\right)}\right\}\\[.2cm]
   |K|<\lambda^2,&\kappa_-\ge\lambda,&\frac{\arcot\left(\frac{\kappa_+}{\lambda}\right)}{\lambda}\\[.2cm]
   -\lambda^2\leq K\leq 0,&0<\kappa_-<\lambda,&\min\left\{\frac{1}{\kappa_+}, \frac{\arcot\left(\frac{\kappa_+}{\lambda}\right)}{\lambda}\right\} \\[.2cm]
   -\lambda^2\leq K\leq 0,&\kappa_-\ge\lambda,&\frac{1}{\kappa_+} \\[.2cm]
   0<K<\lambda^2,&\kappa_+>\kappa_-\ge0&\frac{\arcot\left(\frac{\kappa_+}{\lambda}\right)}{\lambda}\\[.2cm]
   0<K<\lambda^2,&\kappa_-=\kappa_+=0,&\frac{\pi}{2\lambda}\\[.2cm]
 \end{tabular}$$
In each cases listed in the corollary, it is now enough to replace the value of $\bar h$ in Theorem~\ref{thm:signedcurvature} with the value given in the table above. To get part (3), we also use the following inequality which is proved  in the proof of Proposition \ref{prop31}:
$$
\frac{\lambda}{\arcot\left(\frac{\kappa_+}{\lambda}\right)}=\frac{\lambda}{\frac{\pi}{2}-\arctan(\frac{\kappa_+}{\lambda})}
\leq\sqrt{\lambda^2+\kappa_+^2}.
$$
\end{proof}

\subsection{Flat manifolds}\label{section:flatmanifolds} The goal is to prove Theorem \ref{thm:flatspace}.

\begin{proof}
\textbf{Case 1}: It follows from Corollary \ref{coro:curvaturebounds} and Lemma \ref{lemma:riccationed} that
$$\htd=\min\{\roll(M),m(0,\kappa_+)\}=\min\{1/\kappa_+,\hb\}.$$
Moreover, the corresponding comparison functions are 
\begin{gather*}
\mu(\delta)=a(\delta)=\frac{1}{\delta-1/\kappa_-},\mbox{ and } \quad
b(\delta)=\frac{1}{\delta-1/\kappa_+}.
\end{gather*}
Whence, for every $0<\delta< h$
\begin{align*}
 (h-\delta)a(\delta)=(h-\delta)\mu(\delta) \le h|\kappa_-|,\quad\mbox{and}\quad -(h-\delta)b(\delta)\leq 1. 
\end{align*} 
Thus, $\cbm\leq n-1+h|\kappa_-|$, and $\scbm=h|\kappa_-|$. Therefore, one can take
$$\constA=n\max\{\hb^{-1} \kappa_+\}+|\kappa_-|,\quad\text{and}\quad {\constB=\frac{1}{2}(\bar h^{-1}+n\vert \kappa_-\vert)}.$$
\textbf{Case 2}: $0\le \kappa_i<\kappa_+$. 
It follows from Corollary \ref{coro:curvaturebounds} and Lemma \ref{lemma:riccationed} that
$$\htd=\min\{\roll(M),m(0,\kappa_+)\}=\min\{1/\kappa_+,\hb\}.$$
Moreover, the corresponding comparison functions are 
\begin{gather*}
\mu(\delta)=a(\delta)=0,\mbox{ and } \quad
b(\delta)=\frac{1}{\delta-1/\kappa_+}.
\end{gather*}
Whence,
\begin{align*}
-(h-\delta)b(\delta)\leq 1.
\end{align*} 
Thus, $\cbm\leq n-1$, and $\scbm=0$. Therefore, one can take
$$\constA=\frac{n}{\htd}=n\max\{\hb^{-1}, \kappa_+\},\quad\text{and}\quad \constB=\frac{1}{2\hb}.$$

\medskip
\noindent
\textbf{Case 3}: $\kappa_-<\kappa\le 0$. It follows from Corollary \ref{coro:curvaturebounds} and Lemma \ref{lemma:riccationed} that
$$\htd=\min\{\roll(M),m(0,\kappa_-)\}=\hb.$$
Moreover, the corresponding comparison functions are 
\begin{gather*}
\mu(\delta)=a(\delta)=\frac{1}{\delta-1/\kappa_-},\quad\mbox{ and }\quad 
b(\delta)=0.
\end{gather*}
It follows that $(h-\delta)\frac{1}{\delta-1/\kappa_-} \le h|\kappa_-|$ for $0<\delta< h $, so that one can take
$$
\constA= \bar h^{-1}+\vert \kappa_-|\quad\mbox{and}\quad\constB=\frac{1}{2}(\bar h^{-1}+n\vert \kappa_-\vert).
$$
\end{proof}

\begin{rem}
	In all estimates above for $\constB$, one can consider $\kappa_-$ as a lower bound for the mean curvature of $\Sigma$ and $\alpha$ as a lower bound on the Ricci curvature.  The same estimates remain true. See also Theorem \ref{rem:weakerH}.
\end{rem}

\subsection{Cylindrical boundaries} \label{cylindricalcase}
 Let assume that  a neighbourhood of the boundary $\Sigma$ is isometric to $\Sigma\times[0,L)$. In the Riccati equation \eqref{riequ1} only the sectional curvature along the normal geodesic appears, and it vanishes in the present example. Because moreover the boundary $\Sigma$ is totally geodesic, we have $\scbm=\cbm=0$ and $\roll(M)\ge L$.
Therefore, \eqref{ineq:main1} and \eqref{ineq:thm:mainstek} hold with $2\constB=\constA=\frac{1}{L}$.
In particular $|\sigma_j-\sqrt{\lambda_j}|<1/L.$
In fact, using the min-max characterization of eigenvalues and separation of variables, it is possible to show that for each $j$ which is larger than the number of connected components of $\Sigma$, one has
$$\sqrt{\lambda_j}\tanh(\sqrt{\lambda_j}L)\leq\sigma_j\leq\sqrt{\lambda_j}\coth(\sqrt{\lambda_j}L).$$
This implies that $\sigma_j-\sqrt{\lambda_j}$ tends to zero very fast with $\lambda_j\to\infty$.

\section{Examples and remarks} \label{section:examples}
In this section, we discuss the necessity of the hypothesis of Theorem \ref{mainintro} and give different kinds of examples to illustrate this.

\begin{example} \label{rolling}
The condition on the rolling radius is clearly a necessary condition. An easy example, with two boundary components, is given as follow: Take $M=T^n \times [0,L]$ where $T^n$ is a n-dimensional flat torus. The sectional curvature of $M$ is $0$, the principal curvatures of $\partial M=T^n$ are $0$. As $L\to 0$, the rolling radius tends to $0$, and $\sigma_k \to 0$ for all k (\cite{CEG11}). However $\lambda_3 (\partial M)$ is fixed and strictly positive. This contradicts Inequality (\ref{ineq:main1}), and shows that we need a lower bound on the rolling radius.

We can construct an example with one boundary component in the spirit of \cite[Example 2]{CGI13}. The length of the boundary is bounded, and this insures that $\lambda_2$ is bounded away from $0$. The curvature of the boundary is bounded, but as the boundary becomes ``close to itself", this allow to construct small $\sigma_2$.  
\begin{figure}[ht!]  
\begin{center}
	\includegraphics[width=7cm]{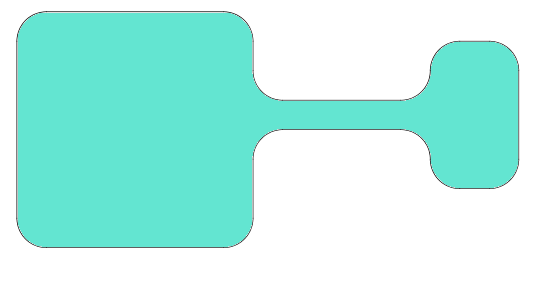}
\end{center}
 \end{figure}
\end{example}

\begin{example} \label{curvature}
The condition on the curvature of the ambient space is also a necessary condition: in Theorem 3.5 of \cite{CEG17}, we have constructed an example where the boundary is fixed (which implies that all $\lambda_i$ are fixed), its principal curvatures equal to $0$ and the rolling radius uniformly bounded from below. However, $\sigma_2$ becomes arbitrarily large. This contradicts Inequality (\ref{ineq:thm:mainstek}) and this comes form the fact that the curvature of the ambient space goes to $\infty$. 
 
	\medskip
	We can also adapt this example in order to show that with{out} constraint on the curvature, $\sigma_3$ may be small and not $\lambda_3$, even with a control of the rolling radius. We take $\Sigma$ of dimension $n\ge 3$, and consider a Riemannian metric $g_\varepsilon=
  h_\varepsilon^2(t)g_0+dt^2$ which coincides with $g$ in a $\epsilon$-neighbourhood of $\Sigma$ in $M$  and such 
  that $ h_\varepsilon^2(t)$ is $\le \epsilon$ outside a neighbourhood of radius $2\epsilon$ of $\Sigma$. A direct and simple calculation shows that if $f$ is a function of norm $1$ on $\Sigma$, then the Rayleigh quotient of $f$ viewed as a function on $M$ (that is $F(p,t)=f(p)$) has a Rayleigh quotient converging to $0$ with $\epsilon$. All the Steklov spectrum of $(M,g_\varepsilon)$ goes to $0$ with $\epsilon$.
   
\end{example}
 
\begin{example} \label{constant}
One can use Example 6.3 given in the proof of theorem 7 in \cite{CGR} in order to construct an example with $\sigma_2$ bounded from below by a positive constant, but $\lambda_2$ arbitrarily small. Example 6.3 consist in gluing together a finite number of fundamental pieces, so that we get a family of example with uniformly bounded geometry (curvature, curvature of the boundary, rolling radius). Moreover, we have shown that $\sigma_2$ was uniformly bounded below. At the same time, the boundary is a curve of length going to $\infty$, so that $\lambda_2$ goes to $0$.  
\end{example}

\section{Appendix: the Riccati equation in dimension one}
Let $K,\kappa\in\R$. Let $\mathbb{M}_{K}$ be the space-form of
constant sectional curvature $K$. Let
$\Omega\subset\mathbb{M}_{K}$ be an open connected set with connected smooth umbilical boundary
$\Sigma=\partial\Omega\subset\mathbb{M}_\alpha$ with principal curvature $\kappa$.
We recall that, in this situation, we have to investigate equation \eqref{1driccati}:
$$\begin{cases}
  y'+y^2+K=0,\\
  y(0)=-\kappa.
\end{cases}$$
Our goal is to compute the solution $a(s)$ explicitly and to
determine the maximal $m>0$ such that the solution $y$ exists on $[0,m)$ in order to prove Lemma \ref{lemma:riccationed} and \ref{lemma:rhonsmallerone}.

\medskip
\noindent

\begin{proof}[\textbf{Proof of Lemma \ref{lemma:riccationed}.}]

In the particular situation where $\kappa ^2+K=0$, the solution
is the constant function $y:\R\rightarrow\R$ defined by
$y(t)=-\kappa$.
In all other situations, the solution satisfies
\begin{gather*}
-\frac{y'}{y^2+K}=1\qquad\mbox{ on }\,I.
\end{gather*}
Integration from $0$ to $x\in I$ leads to
\begin{gather}\label{eq:ricaonedint}
-\int_0^x\frac{y'(t)}{y^2(t)+K}\,dt=x.
\end{gather}
The explicit representation of this integral by elementary functions depend on the sign of the curvature $K$.

\subsubsection*{Case $K=0$}
It follows from
\begin{align*}
x=-\int_0^x\frac{y'(t)}{y^2(t)}\,dt=\frac{1}{y(x)}-\frac{1}{y(0)}=\frac{1}{y(x)}+\frac{1}{\kappa}
\end{align*}
that
$$y(x)=\frac{1}{x-\frac{1}{\kappa}}=\frac{\kappa}{\kappa x-1}$$
The maximal existence time therefore is
$$m(0,\kappa)=
\begin{cases}
  \frac{1}{\kappa}&\mbox{ if }\kappa>0,\\
  \infty&\mbox{ if }\kappa\leq 0.
\end{cases}$$

\subsubsection*{Case $K>0$} Let $\lambda>0$ be such that $K=\lambda^2$, and observe that
\begin{align*}
-\int_0^x\frac{y'(t)}{y^2(t)+K}\,dt&=-\frac{1}{\lambda}\int_0^x\frac{\frac{y'(t)}{\lambda}}{(\frac{y(t)}{\lambda})^2+1}\,dt\\
&=-\frac{1}{\lambda}\left(\arctan(\frac{y(x)}{\lambda})-\arctan(\frac{y(0)}{\lambda})\right)\\
&=\frac{1}{\lambda}\left(-\arctan(\frac{y(x)}{\lambda})-\arctan(\frac{\kappa}{\lambda})\right)
\end{align*}
Together with \eqref{eq:ricaonedint} this implies that
$$y(x)=-\lambda\tan\left(\lambda x+\arctan(\frac{\kappa}{\lambda})\right)$$
For $x>0$, this solution is well-defined as long as
$$\lambda x+\arctan(\frac{\kappa}{\lambda})<\pi/2.$$
That is,
$$m(K,\kappa)=\frac{1}{\lambda}\left(\frac{\pi}{2}-\arctan(\frac{\kappa}{\lambda})\right).$$

\subsubsection*{Case $K<0$} 
Let $\lambda>0$ be such that
$K=-\lambda^2$.
The left-hand-side of \eqref{eq:ricaonedint} is
\begin{align*}
  -\int_0^x\frac{y'(t)}{y^2(t)+K}\,dt
  &=
  \frac{1}{\lambda}\int_0^x\frac{\frac{y'(t)}{\lambda}}{1-(\frac{y(t)}{\lambda})^2}\,dt\\
  &=
  \frac{1}{\lambda}\int_{y(0)/\lambda}^{y(x)/\lambda}\frac{1}{1-u^2}\,du
  =\frac{1}{\lambda}\int_{-\kappa/\lambda}^{y(x)/\lambda}\frac{1}{1-u^2}\,du.
\end{align*}
Now if $-\kappa/\lambda\in (-1,1)$ then
$$\frac{1}{\lambda}\int_{-\kappa/\lambda}^{y(x)/\lambda}\frac{1}{1-u^2}\,du=\frac{1}{\lambda}\left(\arctanh(\frac{y(x)}{\lambda})-\arctanh(-\frac{\kappa}{\lambda})\right).$$
On the other hand, if $|\kappa/\lambda|>1$ then
$$\frac{1}{\lambda}\int_{-\kappa/\lambda}^{y(x)/\lambda}\frac{1}{1-u^2}\,du=\frac{1}{\lambda}\left(\arcoth(\frac{y(x)}{\lambda})-\arcoth(-\frac{\kappa}{\lambda})\right).$$
It follows that
\begin{gather}
  y(x)=
  \begin{cases}
    \lambda\tanh\left(\lambda
    x-\arctanh(\frac{\kappa}{\lambda})\right)&\mbox{ if }
    |\kappa|<\lambda;\\
    \lambda\coth\left(\lambda
    x-\arcoth(\frac{\kappa}{\lambda})\right)&\mbox{ if }
    |\kappa|>\lambda.
  \end{cases}
\end{gather}

The maximal time of existence is
\begin{gather*}
  m(y,\kappa)=
  \begin{cases}
    \frac{1}{\lambda}\arcoth(\frac{\kappa}{\lambda})&\mbox{ if }\kappa>\lambda\\
    +\infty&\mbox{ if }|\kappa|<\lambda,\\
    +\infty&\mbox{ if }\kappa<-\lambda,\\
  \end{cases}
\end{gather*}

\end{proof}

\medskip

We can now prove Lemma \ref{lemma:rhonsmallerone} and Lemma \ref{Lemma:technicalf(x)} which provide bounds on $f(x)=-(h-x)y(x)$.

\begin{proof}[\textbf{Proof of Lemma \ref{lemma:rhonsmallerone}}]

  We use the formula for $y(s)$ given in Lemma~\ref{lemma:riccationed} to
  verify the statement in different  cases. The proof is a
  straightforward calculation.  \\ 
  \textit{Case 1.} $K=\lambda^2, \lambda>0$. Here we use the fact that 
  \begin{equation}\label{cot}x\cot(x)\le 1,\quad \forall x\in
    [0,{\pi}),\end{equation}  
  with equality at $x=0$.\\
 Recall from Lemma \ref{lemma:riccationed} that for $K=\lambda^2$ we have 
  $$-y(s)=\lambda\tan\left(\lambda   s+\arctan(\frac{\kappa}{\lambda})\right)=\lambda\cot\left(\arcot(\frac{\kappa}{\lambda})-\lambda   s\right)$$ for any $0\le s\le
  \frac{1}{\lambda}{\arcot(\frac{\kappa}{\lambda})}$. We used the identity $\arctan(x)+\arcot(x)=\frac{\pi}{2}$. Thus
  \[-(h-s)y(s)\le \left(\arcot(\frac{\kappa}{\lambda})-\lambda s\right) \cot\left(\arcot(\frac{\kappa}{\lambda})-\lambda s\right)\le1.\]

\noindent \textit{Case 2.} $K=0$.  If $\kappa\le0$, then $-y(s)\le 0$ on $I$. If $\kappa>0$, then $-(h-s)y(s)=(h-s)\frac{\kappa}{1-\kappa s}\le 1$ if and only if $h<\frac{1}{\kappa}$. Hence, it is clear that in both situations the statement of the lemma holds. \smallskip
\end{proof}

\begin{proof}[\textbf{Proof of Lemma \ref{Lemma:technicalf(x)}}] We consider two cases separately.
	
\smallskip

\noindent\textbf{Case 1}: $K\ge 0$.
From Lemma \ref{lemma:rhonsmallerone}, we know that $f(x)\le1$ for $K\ge0$. It remains to show that $f(x)\ge\min\{0,h\kappa\}.$
\smallskip

\noindent\textbf{Case 1(a).~} $K\textsl{}=0$. If $\kappa=0$, then $y\equiv 0$ and $f\equiv 0$ and there is nothing more to prove. If $\kappa>0$, then $I=(0,1/\kappa)$ and $f(x)=-(h-x)\frac{1}{x-1/\kappa}>0$.
 If $\kappa<0$, then $I=(0,\infty)$ and $f(x)=-(h-x)\frac{1}{x-1/\kappa}$. It follows from $(h-x)\le h$ and $(x-\frac{1}{\kappa}) \ge \frac{1}{\vert \kappa \vert}$ that
$$
 -(h-x)\frac{1}{x-1/\kappa}\ge h\kappa.
$$
\textbf{Case 1(b).~} $K>0$.

From Lemma \ref{lemma:rhonsmallerone} and its proof we know
$$f(x)=\lambda(h-s)\cot\left(\arcot(\frac{\kappa}{\lambda})-\lambda   s\right),\quad\text{with}~ I=\left(0, \frac{1}{\lambda}{\arcot(\frac{\kappa}{\lambda})}\right).$$
If $\arcot(\frac{\kappa}{\lambda})-\lambda   s\le\frac{\pi}{2}$, then $f(x)\ge0$. If $\arcot(\frac{\kappa}{\lambda})-\lambda   s\ge\frac{\pi}{2}$, then $\cot\left(\arcot(\frac{\kappa}{\lambda})-\lambda   s\right)\le0$ and
$$f'(x)=-\lambda \cot\left(\arcot(\frac{\kappa}{\lambda})-\lambda   s\right)+\frac{\lambda^2(h-s)}{\sin^2(\arcot(\frac{\kappa}{\lambda})-\lambda   s)}\ge0.$$
Therefore, $f(x)\geq f(0)=h\kappa$.

\smallskip

\noindent  
\textbf{Case 2}: $K<0$ and $\kappa\geq \lambda:=\sqrt{|K|}$.

The maximal existence interval is $I=(0,\frac{1}{\lambda}\arcoth(\kappa/\lambda))$ and
$$f(x)=-(h-x)\lambda\coth(\lambda x-\arcoth(\kappa/\lambda)).$$
For any $\gamma\geq 0$ the function $\psi_\gamma:\R_+\rightarrow\R$ defined by
$$\psi_\gamma(t)=t\coth(t+\gamma)$$
is increasing.
Let $\phi=\frac{1}{\lambda}\arcoth(\kappa/\lambda).$
Observe that for $\gamma=\lambda\phi-\lambda h$
$$0\leq f(x)=\psi_\gamma(\lambda h-\lambda x).$$
It follows that
\begin{align*}
  0\leq f(x)&=(\lambda h-\lambda x)\coth(\lambda\phi-\lambda x)\\
  &=(\lambda h-\lambda x)\coth(\lambda h-\lambda x+\lambda\phi-\lambda h)
  \leq \lambda h\coth(\lambda\phi)=h\kappa.
\end{align*}
\end{proof}

\section*{Acknowledgment}
Part of this work was done while BC was visiting Universit\'e Laval. He thanks the personnel from the D\'epartement de math\'ematiques et de statistique for providing good working conditions.
Part of this work was done while AG was visiting Neuch\^atel. The support of the Institut de Math\'ematiques de Neuch\^atel is warmly acknowledged.
AH would like to thank the Mittag--Leffler Institute for the support and  for an excellent working condition during the starting phase of the project. 

\bibliographystyle{plain}
\bibliography{refstek}

\end{document}